\DeclareSymbolFont{cyrletters}{OT2}{wncyr}{m}{n}
\DeclareMathSymbol{\Sha}{\mathalpha}{cyrletters}{"58}
\theoremstyle{plain}
\newtheorem{theorem}{Theorem}[section]
\newtheorem*{theorem*}{Theorem}
\newtheorem{corollary}[theorem]{Corollary}
\newtheorem{conjecture}[theorem]{Conjecture}
\newtheorem{proposition}[theorem]{Proposition}
\newtheorem*{conjecture*}{Conjecture}
\newtheorem*{example*}{Example}
\theoremstyle{definition}
\theoremstyle{remark}
\newtheorem{remark}{Remark}
\newtheorem*{remark*}{Remark}
\newtheorem*{remarks*}{Remarks}
\numberwithin{equation}{section}
\newcommand*{\rom}[1]{\expandafter\@slowromancap\romannumeral #1@}
\def\({\left(}
\def\){\right)}
\newcommand{\la}{\lambda}
\begin{document}

\title[Over-$(q,t)$-Binomial Coefficients]{An overpartition analogue of $q$-binomial coefficients, II: combinatorial proofs and $(q,t)$-log concavity}

\author{Jehanne Dousse}
\address{Institut f\"ur Mathematik, Universit\"at Z\"urich\\ Winterthurerstrasse 190, 8057 Z\"urich, Switzerland}
\email{jehanne.dousse@math.uzh.ch}

 \author{Byungchan Kim}
\address{School of Liberal Arts \\ Seoul National University of Science and Technology \\ 232 Gongneung-ro, Nowon-gu, Seoul,01811, Korea}
\email{bkim4@seoultech.ac.kr}

\subjclass[2010] {11P81, 11P84, 05A10, 05A17, 11B65, 05A20, 05A30}
\keywords{Gaussian polynomial, $q$-binomial coefficient, overpartitions, over-$(q,t)$-binomial coefficient, finite versions of $q$-series identities, combinatorial proofs, $q$-log concavity, Delannoy numbers}

\date{\today}
\thanks{This research was supported by the International Research \& Development Program of the National Research Foundation of Korea (NRF) funded by the Ministry of Education, Science and Technology(MEST) of Korea (NRF-2014K1A3A1A21000358), the Forschungskredit of the University of Zurich, grant no. FK-16-098, and the STAR program number 32142ZM}

\begin{abstract}
 In a previous paper, we studied an overpartition analogue of Gaussian polynomials as the generating function for overpartitions fitting inside an $m \times n$ rectangle. Here, we add one more parameter counting the number of overlined parts, obtaining a two-parameter generalization $\overline{{m+n \brack n}}_{q,t}$ of Gaussian polynomials, which is also a $(q,t)$-analogue of Delannoy numbers. First we obtain finite versions of classical $q$-series identities such as the $q$-binomial theorem and the Lebesgue identity, as well as two-variable generalizations of classical identities involving Gaussian polynomials. Then, by constructing involutions, we obtain an identity involving a finite theta function and prove the $(q,t)$-log concavity of $\overline{{m+n \brack n}}_{q,t}$. We particularly emphasize the role of combinatorial proofs and the consequences of our results on Delannoy numbers. We conclude with some conjectures about the unimodality of $\overline{{m+n \brack n}}_{q,t}$.
\end{abstract}

\maketitle

\section{Introduction}
Gaussian polynomials (or $q$-binomial coefficients) are defined by 
\[
{m+n \brack n}_q = \frac{ (q)_{m+n} }{(q)_{m} (q)_{n}},
\]
where $(a)_{k} = (a;q)_{k} := \prod_{j=1}^{k} (1-aq^{j-1} )$ for $k \in \mathbb{N}_{0} \cup \{ \infty \}$. They are the generating functions for partitions fitting inside an $m \times n$ rectangle. In our previous paper \cite{DK}, we studied an overpartition analogue $\overline{{m+n \brack n}}_{q}$ of these polynomials as the generating function for the number of overpartitions fitting inside an $m \times n$ rectangle. We recall that an overpartition is a partition in which the last occurrence of each distinct number may be overlined \cite{LC}, the eight overpartitions of $3$ being
\[
3, \overline{3}, 2+1, \overline{2}+1, 2 + \overline{1}, \overline{2}+ \overline{1}, 1+1+1, 1+1+\overline{1}.
\]

In this paper, we add a variable $t$ counting the number of overlined parts in our over $q$-binomial coefficients and define
 \[
\overline{{m+n \brack n}}_{q,t} := \sum_{k,j \geq 0} \overline{p}(m,n,k,N) t^k q^N,
\]
where  $\overline{p}(m,n,k,N)$ counts the number of overpartitions of $N$, with $k$ overlined parts, fitting inside an $m \times n$ rectangle, i.e. with largest part $\leq m$ and number of parts $\leq n$. We call these two-variable polynomials $\overline{{m+n \brack n}}_{q,t}$ over-$(q,t)$-binomial coefficients or $(q,t)$-overGaussian polynomials. If we set $t=0$, meaning that no part is overlined, we obtain the classical $q$-binomial coefficients, and if we set $t=1$ we obtain the over $q$-binomial coefficients of \cite{DK}.  As we shall see in Section \ref{sec:basic}, the polynomials $\overline{{m+n \brack n}}_{q,t}$ are also $(q,t)$-analogues of the Delannoy numbers $D(m,n)$ \cite{Delannoy}. 

Again, by conjugation of the Ferrers diagrams, it is clear that
\[
\overline{{m+n \brack n}}_{q,t} = \overline{{m+n \brack m}}_{q,t}.
\]

Most of our results of \cite{DK} easily generalize to this new setting. Moreover, the new variable $t$ also allows us to do more precise combinatorial reasoning. Therefore in this paper we mainly focus on combinatorial proofs, which turn out to be very powerful and often simpler than $q$-theoretic proofs.

The limiting behavior of over-$(q,t)$-binomial coefficients is interesting, with
\[
\lim_{n \to \infty} \overline{{n \brack j}}_{q,t}  = \frac{ (-t q)_{j}}{(q)_{j}},
\]
as when $n$ tends to infinity, the restriction on the size of the largest part (or equivalently the number of parts) disappears. From this limiting behavior, we expect natural finite versions of identities in which overpartitions naturally arise. In this direction, we consider finite versions of  classical $q$-series identities. For example, we prove a finite version of the $q$-binomial theorem. 
\begin{theorem} \label{fin_qbinom_thm}
For every positive integer $n$, 
\begin{equation}\label{fin_qbinom_id}
\sum_{k \geq 0} \overline{{ n+k-1 \brack k}}_{q,t} z^k q^k =  \frac{(-tzq^2)_{n-1}}{(zq)_{n}}.
\end{equation}
\end{theorem}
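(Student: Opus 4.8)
The plan is to give a direct combinatorial proof by expanding the left-hand side according to the definition of the over-$(q,t)$-binomial coefficient and then interchanging the order of summation. Recall that $\overline{{n+k-1 \brack k}}_{q,t}$ is the generating function for overpartitions $\lambda$ with largest part $\leq n-1$ and at most $k$ parts, where $q$ tracks the size $|\lambda|$ and $t$ tracks the number $o(\lambda)$ of overlined parts. Substituting this into the left-hand side of \eqref{fin_qbinom_id} produces a double sum, over $k \geq 0$ and over all such overpartitions $\lambda$.

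The key observation is how to read the weight $z^k q^k$: both $z$ and the extra power of $q$ should be thought of as tracking the number of parts. After swapping the two summations, each overpartition $\lambda$ with largest part $\leq n-1$ contributes to every $k \geq \ell(\lambda)$, where $\ell(\lambda)$ denotes the number of parts. Summing the resulting geometric series $\sum_{k \geq \ell(\lambda)} (zq)^k = (zq)^{\ell(\lambda)}/(1-zq)$ would yield
\[
\sum_{k \geq 0} \overline{{ n+k-1 \brack k}}_{q,t} z^k q^k = \frac{1}{1-zq} \sum_{\lambda:\, \lambda_1 \leq n-1} t^{o(\lambda)} q^{|\lambda|} (zq)^{\ell(\lambda)}.
\]

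Next I would factor the remaining sum as an Euler product over the admissible part sizes $i \in \{1, \dots, n-1\}$. For each size $i$, a part contributes $q^{i}$ to the size and $zq$ to the bookkeeping factor $(zq)^{\ell(\lambda)}$, i.e. a weight $z q^{i+1}$ per part; the overpartition rule then allows arbitrarily many non-overlined parts of size $i$ (contributing $1/(1-zq^{i+1})$) together with at most one overlined part of size $i$ (contributing $1 + t z q^{i+1}$). Hence the sum factors as $\prod_{i=1}^{n-1} \frac{1+tzq^{i+1}}{1-zq^{i+1}}$, and after reindexing $j = i+1$ and absorbing the prefactor $1/(1-zq)$ into the denominator, the expression collapses to $(-tzq^2)_{n-1}/(zq)_n$, which is exactly the right-hand side.

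All steps are routine once the setup is fixed; the only points requiring care are the combinatorial reading of the weight $z^k q^k$ and the corresponding per-part-size factorization, where one must correctly encode the rule that only the last occurrence of each size may be overlined as the factor $(1 + tzq^{i+1})$ rather than permitting unrestricted overlined parts. An alternative would be to argue by induction on $n$ using a Pascal-type recurrence for $\overline{{n+k-1 \brack k}}_{q,t}$, but the interchange-of-summation argument is cleaner and keeps the overpartition combinatorics fully transparent, in keeping with the combinatorial emphasis of this paper.
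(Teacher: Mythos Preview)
Your proof is correct and is essentially the same combinatorial argument as the paper's, just packaged algebraically: where the paper interprets $z^k q^k$ as a column of $k$ unoverlined $1$'s appended to the Ferrers diagram (so that both sides of \eqref{fin_qbinom_id} generate overpartitions with largest part $\leq n$ and no overlined $1$), your interchange of summation and geometric-series step accomplish the same shift of parts by one together with the extraction of the factor $1/(1-zq)$ for the extra unoverlined $1$'s. The resulting Euler-product factorization is then exactly the reading of the right-hand side that the paper invokes.
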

By taking the limit as $n \to \infty$, we find that
\[
\sum_{k \geq 0} \frac{ (-tq)_{k}}{(q)_{k}} z^k q^k = \frac{ (-tzq^2)_{\infty}}{(zq)_{\infty}}.
\]
Replacing $z$ by $z/q$ and $t$ by $-t/q$ gives  the $q$-binomial theorem.

We also prove a finite version of a special case of the Rogers-Fine identity. 
 \begin{theorem} \label{fin_RF_thm}
For a positive integer $n$, 
\[
\sum_{k \geq 0} \overline{{ n+k-1 \brack k}}_{q,t} z^k q^k =  \sum_{k \geq 0} \frac{z^k q^{k^2+k} (-t zq^2)_{k}}{(zq)_{k+1}}  \( \overline{{ n-1 \brack k}}_{q,t} +t z q^{2k+2} \overline{{ n-2 \brack k}}_{q,t} \).
\]
\end{theorem}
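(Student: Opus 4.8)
The plan is to first invoke Theorem \ref{fin_qbinom_thm} to put the left-hand side in closed form, reducing the statement to a pure $(q,t)$-series identity, and then to prove that identity by induction on $n$. Since the left-hand side here is literally the left-hand side of \eqref{fin_qbinom_id}, Theorem \ref{fin_qbinom_thm} gives $\sum_{k\ge 0}\overline{{n+k-1\brack k}}_{q,t}z^kq^k = (-tzq^2)_{n-1}/(zq)_n$, so it suffices to show
\[
\frac{(-tzq^2)_{n-1}}{(zq)_n} = \sum_{k\ge 0}\frac{z^kq^{k^2+k}(-tzq^2)_k}{(zq)_{k+1}}\left(\overline{{n-1\brack k}}_{q,t} + tzq^{2k+2}\overline{{n-2\brack k}}_{q,t}\right) =: R_n.
\]
As a consistency check on the shape of the right-hand side, letting $n\to\infty$ and using $\overline{{n-1\brack k}}_{q,t},\overline{{n-2\brack k}}_{q,t}\to (-tq)_k/(q)_k$ recovers exactly the Rogers--Fine identity with parameters $\alpha=-tq$, $\beta=q$, $\tau=zq$, for which $1-\alpha\tau q^{2k}=1+tzq^{2k+2}$ produces the two-term factor.

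For the induction, the base cases $n=1$ and $n=2$ follow by direct computation: when $n=1$ only the $k=0$ term survives and gives $1/(1-zq)=(-tzq^2)_0/(zq)_1$, while the $n=2$ case reduces, after factoring out $(1+tzq^2)$ from the $k=0$ and $k=1$ terms, to the elementary identity $\frac{1}{1-zq}+\frac{zq^2}{(1-zq)(1-zq^2)}=\frac{1}{(1-zq)(1-zq^2)}$. For the inductive step I would show that $R_n$ satisfies the same recurrence in $n$ as the closed form, namely $(1-zq^n)R_n=(1+tzq^n)R_{n-1}$, which mirrors the factorization $(-tzq^2)_{n-1}/(zq)_n = \frac{1+tzq^n}{1-zq^n}\cdot(-tzq^2)_{n-2}/(zq)_{n-1}$. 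The key tool is the Pascal-type recurrence for over-$(q,t)$-binomial coefficients from Section \ref{sec:basic}, which expresses $\overline{{n-1\brack k}}_{q,t}$ through $\overline{{n-2\brack k}}_{q,t}$, $\overline{{n-2\brack k-1}}_{q,t}$ and the overlined-part contribution; substituting this into $R_n$, the summands carrying index $k-1$ are reindexed by the shift $k\mapsto k+1$ so that the prefactors $z^kq^{k^2+k}(-tzq^2)_k/(zq)_{k+1}$ line up, using $(-tzq^2)_k=(1+tzq^{k+1})(-tzq^2)_{k-1}$ and $(zq)_{k+1}=(1-zq^{k+1})(zq)_k$.

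The main obstacle is precisely this recombination. The two summands $\overline{{n-1\brack k}}_{q,t}$ and $tzq^{2k+2}\overline{{n-2\brack k}}_{q,t}$ must be shown to transform into one another correctly under the recurrence, and since the denominators $(zq)_{k+1}$ do not cancel uniformly against $(1-zq^n)$, the matching is not term-by-term but depends on the shift in $k$ combined with the factorizations above. The combinatorial meaning of the two-term structure, which I would record as a remark, is a Durfee-rectangle decomposition of the overpartitions enumerated by the left-hand side: each overpartition with parts $\le n$ is split along the largest $k\times(k+1)$ rectangle it contains, with $\overline{{n-1\brack k}}_{q,t}$ counting the overpartition to the right of the rectangle, the factors $q^{k^2+k}$, $(-tzq^2)_k$ and $1/(zq)_{k+1}$ encoding the rectangle and the parts below it, and the second term $tzq^{2k+2}\overline{{n-2\brack k}}_{q,t}$ accounting for the boundary case in which an overlined part of size exactly $k+1$ meets the bottom edge of the rectangle. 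Turning this heuristic into a sign-free bijection requires a delicate analysis of that boundary, which is why the inductive $(q,t)$-series argument is the cleaner route.
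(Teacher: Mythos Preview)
Your inductive plan is a genuinely different route from the paper's, and the ironic twist is that the paper's actual proof is precisely the Durfee-rectangle argument you sketch as a side remark and then set aside as ``too delicate.'' In the paper, one interprets the left-hand side (via Theorem~\ref{fin_qbinom_thm}) as the generating function for overpartitions with largest part $\le n$ and no overlined $1$, then splits each such overpartition along its $(k+1)\times k$ Durfee rectangle. The two-term factor arises from a single clean dichotomy: whether the bottom-right corner of the rectangle is overlined. If not, the rectangle contributes $z^kq^{k^2+k}$, the parts below (size $\le k+1$, no $\overline{1}$) give $(-tzq^2)_k/(zq)_{k+1}$, and the parts to the right give $\overline{{n-1\brack k}}_{q,t}$. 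If the corner \emph{is} overlined, then that part equals exactly $\overline{k+1}$, so the parts below are $\le k$ and there is no further overlined $k+1$ above; this yields $tz^kq^{k^2+k}\cdot(-tzq^2)_{k-1}/(zq)_k\cdot\overline{{n-2\brack k-1}}_{q,t}$, and re-indexing $k\mapsto k+1$ produces the $tzq^{2k+2}\overline{{n-2\brack k}}_{q,t}$ term. There is no delicate boundary analysis beyond this; the overlining convention (last occurrence may be overlined) does all the work.

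By contrast, your inductive approach is left at exactly the point you yourself flag as the ``main obstacle'': you have not shown that $(1-zq^n)R_n=(1+tzq^n)R_{n-1}$ actually follows from substituting the Pascal recurrence and re-indexing. The difficulty is real---the Pascal relation for $\overline{{n-1\brack k}}_{q,t}$ introduces both $\overline{{n-2\brack k-1}}_{q,t}$ and $\overline{{n-3\brack k-1}}_{q,t}$, and the latter must be matched against the $tzq^{2k+2}\overline{{n-2\brack k}}_{q,t}$ term from $R_n$ while simultaneously absorbing the $(1-zq^n)$ and $(1+tzq^n)$ factors, which do not interact with the $k$-dependent prefactors in any obvious way. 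So as written your proposal has a genuine gap: the inductive step is asserted but not verified, and the argument that \emph{would} close it---the combinatorial one---is the one you chose not to pursue.
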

By taking the limit as $n \to \infty$, we obtain
\[
\sum_{k \geq 0} \frac{(-tq)_{k}}{(q)_{k}} z^k q^k =  \sum_{k \geq 0} \frac{z^k q^{k^2+k} (-tzq^2)_{k} (-tq)_{k} }{ (q)_{k} (zq)_{k+1}}  \( 1 +t z q^{2k+2}   \),
\]
which is the case $a=1$ of the Rogers-Fine identity~\cite{Fine}
\[
\sum_{k \geq 0} \frac{ (-tq)_{k}}{ (aq)_{k}} z^k q^k = \sum_{k \geq 0} \frac{a^k z^k q^{k^2+k} (-tzq^2 /a)_{k} (-tq)_{k} }{ (aq)_{k} (zq)_{k+1}}  \( 1 +t z q^{2k+2}   \).
\]

We also prove the following very curious identity, which contains a truncated theta function.
\begin{theorem}\label{fin_theta}
For each nonnegative integer $n$,
\[
\sum_{k=0}^{n} (-1)^k \overline{{n \brack k}}_{q,1} =
\begin{cases}
0 &\text{if $n$ is odd,} \\
\sum_{j=-n/2}^{n/2} (-1)^j q^{j^2} &\text{if $n$ is even.}
\end{cases}
\]
\end{theorem}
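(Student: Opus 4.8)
The plan is to translate the alternating sum into a weighted count of overpartitions and then to exhibit sign-reversing involutions. First I would use the combinatorial meaning of $\overline{{n \brack k}}_{q,1}$: it is the generating function for overpartitions fitting inside an $(n-k)\times k$ rectangle, i.e.\ with largest part $\lambda_1 \le n-k$ and number of parts $\ell(\lambda)\le k$. Interchanging the order of summation, a fixed overpartition $\lambda$ contributes $q^{|\lambda|}$ to every term with $\ell(\lambda)\le k \le n-\lambda_1$, so
\[
\sum_{k=0}^{n}(-1)^k\overline{{n \brack k}}_{q,1} = \sum_{\lambda:\, \lambda_1+\ell(\lambda)\le n} q^{|\lambda|}\sum_{k=\ell(\lambda)}^{\,n-\lambda_1}(-1)^k .
\]
The inner geometric sum vanishes unless $\lambda_1+\ell(\lambda)\equiv n \pmod 2$, in which case it equals $(-1)^{\ell(\lambda)}$. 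Hence the left-hand side equals $\sum_{\lambda}(-1)^{\ell(\lambda)}q^{|\lambda|}$, the sum running over overpartitions with $\lambda_1+\ell(\lambda)\le n$ and $\lambda_1+\ell(\lambda)\equiv n\pmod 2$.

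Next I would remove the parity restriction using conjugation. Since conjugating the Ferrers diagram is a size-preserving involution on overpartitions in a box that interchanges $\lambda_1$ and $\ell(\lambda)$ (this is exactly the symmetry $\overline{{m+n \brack n}}_{q,1}=\overline{{m+n \brack m}}_{q,1}$ quoted above), the sums $\sum_{\lambda_1+\ell\le n}(-1)^{\ell}q^{|\lambda|}$ and $\sum_{\lambda_1+\ell\le n}(-1)^{\lambda_1}q^{|\lambda|}$ coincide. Writing the parity indicator as $\tfrac12\bigl(1+(-1)^{n}(-1)^{\lambda_1+\ell}\bigr)$ and using $(-1)^{\ell}(-1)^{\lambda_1+\ell}=(-1)^{\lambda_1}$, I get that the whole sum equals $\tfrac{1+(-1)^n}{2}\,C_n$, where $C_n:=\sum_{\lambda_1+\ell(\lambda)\le n}(-1)^{\ell(\lambda)}q^{|\lambda|}$. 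In particular the odd case is immediate: for odd $n$ the prefactor vanishes, so the sum is $0$ (equivalently, conjugation is then a fixed-point-free sign-reversing involution, as no overpartition with $\lambda_1=\ell$ can satisfy $\lambda_1+\ell\equiv 1$).

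It then remains to prove, for even $n$, the clean identity $C_n=1+2\sum_{1\le j\le n/2}(-1)^j q^{j^2}$, which I view as a finite, box-restricted version of Gauss's identity $\frac{(q)_\infty}{(-q)_\infty}=\sum_{j}(-1)^jq^{j^2}$ (the $n\to\infty$ limit, in keeping with the limiting philosophy of the paper, since $\sum_\lambda(-1)^{\ell(\lambda)}q^{|\lambda|}=\prod_i\frac{1-q^i}{1+q^i}$). The key is a sign-reversing involution $\phi$ on overpartitions that preserves both $|\lambda|$ and the quantity $\lambda_1+\ell(\lambda)$, flips the parity of $\ell(\lambda)$, and whose only fixed points are the square overpartitions of shape $(j^j)$. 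A Franklin/Durfee-type move is the natural candidate: transferring a row to a column (remove the smallest part and lengthen the top rows, or the reverse) changes $\ell(\lambda)$ by $1$ and $\lambda_1$ by $\mp 1$, so it preserves $\lambda_1+\ell(\lambda)$ and therefore automatically respects the box constraint. Because each square shape $(j^j)$ carries two overpartitions (overlining the repeated part or not) of equal sign $(-1)^j$, they cannot be paired with one another and must both be fixed, contributing $2(-1)^jq^{j^2}$; the empty overpartition gives the $j=0$ term $1$. Since a square has $\lambda_1+\ell(\lambda)=2j$, the surviving fixed points are exactly those with $j\le n/2$, which reproduces the truncated theta function.

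The hard part will be the explicit construction and verification of $\phi$: one must give an unambiguous rule incorporating the overline bookkeeping (overlining is sign-neutral, so $\phi$ must act on the underlying shape while carrying the overlines along consistently), and then check all four properties---weight-preservation, sign-reversal, invariance of $\lambda_1+\ell(\lambda)$, and that the fixed-point set is precisely the square overpartitions. I expect the cleanest formulation to compare the largest part $\lambda_1$ with the number of parts $\ell(\lambda)$ and to treat the diagonal case $\lambda_1=\ell(\lambda)$ separately, since that is where the squares (the fixed points) live; making this diagonal case watertight, so that non-square diagonal overpartitions are still moved, is the most delicate point.
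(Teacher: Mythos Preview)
Your reduction is correct and genuinely cleaner than the paper's route. Interchanging the sum over $k$ with the sum over overpartitions, collapsing the inner alternating sum, and then using conjugation to remove the parity constraint is an elegant maneuver; it dispatches the odd case immediately and reduces the even case to the single identity
\[
C_n \;=\; \sum_{\lambda:\ \lambda_1+\ell(\lambda)\le n}(-1)^{\ell(\lambda)}q^{|\lambda|}
\;=\; 1+2\sum_{j=1}^{n/2}(-1)^jq^{j^2}.
\]
The paper does the opposite: it builds the sign-reversing involution first on the full set $\bigcup_k \mathcal{O}_{k,n}$ (overpartitions padded with zeros to have exactly $k$ parts) and only afterwards performs your inner sum $\sum_{k=j}^{n-j}(-1)^k$ on the surviving squares. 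Your order of operations is tidier and makes the structural reason for the odd/even dichotomy transparent.

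Where your proposal and the paper converge is exactly the step you flag as ``the hard part.'' The involution you need on $\{\lambda:\lambda_1+\ell\le n\}$---flipping the parity of $\ell$ while preserving $|\lambda|$ and $\lambda_1+\ell$, with the two square overpartitions $(j^j)$ (overlined or not) as the only fixed points---is essentially the paper's map $\phi$. The paper realises it via the Durfee square: with $\pi$ the piece below and $\mu$ the conjugate of the piece to the right, one shuttles the smallest positive part between $\pi$ and $\mu$, which is precisely your ``row $\leftrightarrow$ column'' transfer. What makes this delicate for overpartitions is not the diagonal case per se but the overline bookkeeping when $s(\pi)=s(\mu)$: the paper needs four subcases (and two further sub-subcases) depending on which of $s(\pi),s(\mu)$ is overlined, sometimes toggling a pair of overlines to keep the result a legal overpartition and to guarantee $\phi\circ\phi=\mathrm{id}$. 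Your description (``carry the overlines along consistently'') underestimates this; a naive transfer can produce two equal non-overlined smallest parts in $\mu$ or destroy involutivity. So your plan is sound and your reduction buys real clarity, but to complete the proof you will end up constructing something equivalent to the paper's multi-case $\phi$; that case analysis is the unavoidable combinatorial core shared by both approaches.
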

This identity is interesting in several aspects. First of all, it is not clear at all how the cancellation occurs. Its proof is reminiscent of Franklin's proof on Euler's pentagonal number theorem~\cite[Theorem 1.6]{Abook}. Secondly, it also resembles Zagier's ``strange'' identity \cite{Za}:
\[
\sum_{k \geq 0} (q)_{k} = -\frac{1}{2} \sum_{k \geq 1} \chi (k) q^{(k^2 -1 )/24}.
\]
In our case, by taking the limit as $n$ goes to infinity, we obtain the ``formal'' identity 
\[
\sum_{k \geq 0} (-1)^k \frac{(-q)_{k}}{(q)_{k}} = \sum_{k \in \mathbb{Z}} (-1)^k q^{k^2}.
\]
Here by ``formal'' identity, we mean that the left-hand side does not converge as a power series in $q$. Thirdly, in a $q$-theoretic sense, Theorem \ref{fin_theta} is equivalent to
\[
\sum_{|j| \leq n} (-1)^j q^{j^2} = 2\sum_{j=0}^{n-1}\sum_{k=0}^{j} (-1)^j q^{k(k+1)/2} {2n - k \brack j}_q{j \brack k}_q + (-1)^n \sum_{k=0}^{n} q^{k(k+1)/2} {2n - k \brack n}_q {n \brack k}_q.
\]
Lastly, the involution to prove Theorem \ref{fin_theta} implies the following identity.

\begin{corollary} \label{Fine_Cor}
For each positive integer $n$, we have
 \[
1 + \sum_{k=1}^{n} (-q)^k \( \overline{{ n \brack k }}_{q,1} + \overline{{ n-1
\brack k-1 }}_{q,1} \) = \sum_{ |j| \le  \lfloor ( n+1 )/2 \rfloor }
(-1)^j q^{j^2}.
\]
\end{corollary}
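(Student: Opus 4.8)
The plan is to deduce the corollary from Theorem~\ref{fin_theta} together with the basic Pascal-type recurrence for the over-$(q,t)$-binomial coefficients. Abbreviate $S_m := \sum_{k=0}^{m}(-1)^k \overline{{m \brack k}}_{q,1}$, so that Theorem~\ref{fin_theta} reads $S_m = \sum_{|j| \le m/2}(-1)^j q^{j^2}$ for $m$ even and $S_m = 0$ for $m$ odd. I would prove the corollary by showing that \emph{both} sides are equal to $S_n + S_{n+1}$.

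First I would handle the right-hand side, where the only subtlety is the floor $\lfloor (n+1)/2 \rfloor$. If $n = 2m$, then $\lfloor (n+1)/2 \rfloor = m$, so the right-hand side equals $\sum_{|j| \le m}(-1)^j q^{j^2} = S_{2m} = S_n$, while $S_{n+1} = S_{2m+1} = 0$. If $n = 2m+1$, then $\lfloor (n+1)/2 \rfloor = m+1$, so the right-hand side equals $\sum_{|j| \le m+1}(-1)^j q^{j^2} = S_{2m+2} = S_{n+1}$, while $S_n = S_{2m+1} = 0$. In either parity one of $S_n, S_{n+1}$ vanishes and the surviving one is exactly the right-hand side; hence the right-hand side equals $S_n + S_{n+1}$, and this is where Theorem~\ref{fin_theta} does its work.

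Next I would compute the left-hand side using the recurrence
\[
\overline{{N \brack K}}_{q,t} = \overline{{N-1 \brack K-1}}_{q,t} + q^{K}\overline{{N-1 \brack K}}_{q,t} + t q^{K}\overline{{N-2 \brack K-1}}_{q,t},
\]
the $t$-analogue of the Gaussian Pascal rule (established in Section~\ref{sec:basic}). Applying it with $N = n+1$ and $K = k$, setting $t = 1$, and summing $(-1)^k$ times the result over $0 \le k \le n+1$ (the $k=0$ term being $\overline{{n+1 \brack 0}}_{q,1} = 1$), I obtain
\[
S_{n+1} = 1 + \sum_{k=1}^{n+1}(-1)^k \overline{{n \brack k-1}}_{q,1} + \sum_{k=1}^{n+1}(-q)^k \overline{{n \brack k}}_{q,1} + \sum_{k=1}^{n+1}(-q)^k \overline{{n-1 \brack k-1}}_{q,1}.
\]
Reindexing $j = k-1$ turns the first sum into $-S_n$, and in the last two sums the $k = n+1$ terms drop out because $\overline{{n \brack n+1}}_{q,1} = \overline{{n-1 \brack n}}_{q,1} = 0$. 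Rearranging the resulting identity gives exactly
\[
S_n + S_{n+1} = 1 + \sum_{k=1}^{n}(-q)^k\left(\overline{{n \brack k}}_{q,1} + \overline{{n-1 \brack k-1}}_{q,1}\right),
\]
which is the left-hand side of the corollary. Combined with the previous paragraph, this proves the identity.

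I expect no deep obstacle: once the correct recurrence is in hand the argument is bookkeeping, and I would sanity-check it at $n = 1$, where both sides equal $1 - 2q$. The two points that genuinely need care are the parity split for the floor (which is what lets Theorem~\ref{fin_theta} close off the right-hand side) and the choice of the \emph{column} form of the recurrence: it is essential that the middle term carries $q^{K}$ rather than $q^{N-K}$, since this is what produces the factor $(-q)^k$ demanded by the corollary. The two forms are interchanged by conjugating the Ferrers diagram; I would establish the largest-part form by conditioning an overpartition in the box on whether its maximal part value is attained and, if so, on whether some copy of it is left unoverlined (remove it) or the value occurs only as an overlined part (remove it), and then pass to the column form by conjugation. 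Finally, the phrase ``the involution implies'' in the statement points to an alternative route, in which one reads the combination $\overline{{n \brack k}}_{q,1} + \overline{{n-1 \brack k-1}}_{q,1}$ directly off the fixed points of the involution proving Theorem~\ref{fin_theta}; the recurrence-based derivation above, however, seems the most self-contained.
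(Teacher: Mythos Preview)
Your proof is correct. The reduction to $S_n + S_{n+1}$ on both sides is clean, and the application of the recurrence \eqref{pa1} (in the column form $\overline{{N\brack K}}_{q,t}=\overline{{N-1\brack K-1}}_{q,t}+q^K\overline{{N-1\brack K}}_{q,t}+tq^K\overline{{N-2\brack K-1}}_{q,t}$) is exactly right; the bookkeeping in the reindexing and the vanishing of the $k=n+1$ terms all check out.

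The route, however, is genuinely different from the paper's. The paper does not touch the recurrence at all: it observes that the combination $q^k\bigl(\overline{{n\brack k}}_{q,1}+\overline{{n-1\brack k-1}}_{q,1}\bigr)$ is precisely the generating function for overpartitions with exactly $k$ \emph{positive} parts (rather than $k$ non-negative parts, as in the proof of Theorem~\ref{fin_theta}), and then reruns the same involution $\phi$ on that set. Your final paragraph correctly anticipates this interpretation. What your approach buys is self-containment and independence from the details of $\phi$: once Theorem~\ref{fin_theta} is granted as a black box, only \eqref{pa1} is needed. What the paper's approach buys is that it explains \emph{why} the particular combination $\overline{{n\brack k}}_{q,1}+\overline{{n-1\brack k-1}}_{q,1}$ and the extra factor $q^k$ appear, and it makes transparent why the parity dependence on $n$ disappears (each surviving square $j\times j$ now ranges over $k=j,\dots,n+1-j$, a run of $n-2j+2$ consecutive signs).
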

Corollary \ref{Fine_Cor} is a finite version of a special case of Alladi's weighted partition theorem \cite{Alladi1}.

 We also study $q$-log concavity properties. In \cite{Butler}, Butler showed that $q$-binomial coefficients are $q$-log concave, namely that for all $0<k<n$,
\[
{n \brack k}_q^2 - {n \brack k-1}_q{n \brack k+1}_q
\]
has non-negative coefficients as a polynomial in $q$. Actually, Butler \cite[Theorem 4.2]{Butler} proved a much stronger result, namely that
\begin{equation}
\label{eq:butler}
 { n \brack k}_q  { n \brack \ell }_q -{ n \brack k-1}_q { n \brack \ell +1}_q
\end{equation}
has non-negative coefficients as a polynomial in $q$ for $0<k \leq \ell < n$. Here we prove that  over-$(q,t)$-binomial coefficients satisfy a generalization of this property, and therefore are also $(q,t)$-log concave.

\begin{theorem}
\label{qlog_thm}
For all $0<k \leq \ell < n$, 
\[
\overline{ { n \brack k}}_{q,t}  \overline{ { n \brack \ell }}_{q,t} - \overline{ { n \brack k-1}}_{q,t} \overline{ { n \brack \ell +1}}_{q,t}
\]
has non-negative coefficients as a polynomial in $t$ and $q$.
\end{theorem}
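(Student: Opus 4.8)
The plan is to prove the statement bijectively, by constructing a weight-preserving injection between pairs of overpartitions. Reading each factor as a generating function for overpartitions in a box, the product $\overline{{n \brack k}}_{q,t}\,\overline{{n \brack \ell}}_{q,t}$ enumerates pairs $(\alpha,\beta)$ where $\alpha$ fits in the $k\times(n-k)$ box and $\beta$ in the $\ell\times(n-\ell)$ box, each pair weighted by $q^{|\alpha|+|\beta|}t^{e(\alpha)+e(\beta)}$, with $e(\cdot)$ the number of overlined parts; similarly $\overline{{n \brack k-1}}_{q,t}\,\overline{{n \brack \ell+1}}_{q,t}$ enumerates pairs $(\lambda,\mu)$ with $\lambda$ in the $(k-1)\times(n-k+1)$ box and $\mu$ in the $(\ell+1)\times(n-\ell-1)$ box. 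It therefore suffices to exhibit an injection $\Phi$ from the second family into the first that preserves both the total size and the total number of overlined parts; the difference is then the (manifestly non-negative) generating function for the pairs not in the image of $\Phi$.

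First I would dispose of the variable $t$. Because the number of overlined parts is simply a count attached to the parts, any surgery that relocates whole parts between the two components of a pair preserves the total number of overlined parts. Thus the genuine content is to build $\Phi$ at the level of the underlying uncolored partitions so that it is size-preserving, and then to check that the overlined parts can be carried along while remaining inside the reshaped boxes; the case $t=0$ is precisely Butler's strong $q$-log-concavity~\eqref{eq:butler}, so what is needed is an overpartition refinement of Butler's injection.

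The natural first attempt is the Lindstr\"om--Gessel--Viennot construction: represent each overpartition as a monotone lattice path with East, North and diagonal North-East steps (the diagonal steps, weighted by $t$, recording the overlined parts, in accordance with the Delannoy interpretation of Section~\ref{sec:basic}), place the four relevant sources and sinks so that the ``crossed'' pairs realize $\overline{{n \brack k-1}}_{q,t}\,\overline{{n \brack \ell+1}}_{q,t}$, and swap the two tails beyond the first common vertex. Two pleasant features appear: the multiset of diagonal steps is merely permuted, so the $t$-statistic is automatically preserved, and an integrality argument shows that two such paths can cross only at a lattice point (unit segments of slope $0$, $\infty$, $1$ meet transversally only at integer points), so the first common vertex is well defined and no ``diagonal crossing'' pathology occurs.

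The main obstacle is that this swap is not size-preserving: one checks that exchanging the tails beyond a common vertex changes the total area by the constant $(\ell-k+1)$, independently of the paths, so the naive involution proves only the twisted inequality $\overline{{n \brack k}}_{q,t}\,\overline{{n \brack \ell}}_{q,t}\succeq q^{\,\ell-k+1}\,\overline{{n \brack k-1}}_{q,t}\,\overline{{n \brack \ell+1}}_{q,t}$, which is strictly weaker than (and does not imply) the theorem. The heart of the proof is therefore to repair this $q$-degree defect: the tails must be exchanged together with a compensating local redistribution of $(\ell-k+1)$ cells near the crossing, chosen so that the resulting map is simultaneously size-preserving, injective, and respectful of the two box constraints. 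This is exactly the subtlety that Butler resolves in the classical case, and I expect the bulk of the work to lie in transporting her degree-preserving injection to the overpartition setting and in verifying that the overlined parts---which must stay distinct and fit the reshaped boxes---survive the redistribution. Once this refined injection is in hand, the pairs omitted from its image furnish the desired non-negative generating function in $q$ and $t$.
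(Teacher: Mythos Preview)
Your overall plan---build a size- and overline-preserving injection from $\overline{\mathcal{P}}(n-k+1,k-1)\times\overline{\mathcal{P}}(n-\ell-1,\ell+1)$ into $\overline{\mathcal{P}}(n-k,k)\times\overline{\mathcal{P}}(n-\ell,\ell)$ by extending Butler's construction---is exactly the paper's route. The LGV detour is not used there: the paper works directly with Butler's part-shifting map $\mathcal{A}$ (and its conjugate-twisted companion $\mathcal{L}=\mathcal{S}\circ\mathcal{C}\circ\mathcal{A}\circ\mathcal{C}\circ\mathcal{S}$), and sets $\phi=\mathcal{L}\circ\mathcal{A}$.

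There is, however, a genuine gap in your proposal at the point where you ``dispose of the variable $t$'' on the grounds that relocating whole parts preserves the overline count, and conclude that the real content is Butler's uncolored injection with the overlines carried along passively. Butler's $\mathcal{A}$ does not relocate whole parts: it subtracts $\ell-k+1$ from an initial segment $\lambda_1,\dots,\lambda_I$ and adds $\ell-k+1$ to $\mu_1,\dots,\mu_I$, where $I$ is the largest index with $\lambda_I-\mu_{I+1}\ge \ell-k+1$. If $\lambda_I$ happens to be overlined, the image has $\tau_I=\lambda_I-(\ell-k+1)$ overlined sitting above $\tau_{I+1}=\mu_{I+1}$, and the overpartition condition requires $\tau_I>\tau_{I+1}$ strictly; Butler's threshold guarantees only $\tau_I\ge\tau_{I+1}$, so the image can fail to be an overpartition. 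The paper's remedy is not a post-hoc check but a change to the cut-off rule itself: one takes $I$ to be the largest index with $\lambda_I-\mu_{I+1}\ge \ell-k+1$ when $\lambda_I$ is non-overlined and $\ge \ell-k+2$ when $\lambda_I$ is overlined. With this modified threshold one must then re-verify both that $(\gamma,\tau)$ is a valid pair of overpartitions and that the altered $\mathcal{A}$ is still an involution (the same $I$ works for $(\gamma,\tau)$); these verifications are the substantive new content, and they are absent from your sketch.
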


 Our proof is again combinatorial, as  we construct an injection to show the non-negativity.  The $q$-log concavity of $q$-binomial coefficients and of Sagan's $q$-Delannoy numbers \cite{Sagan}, as well as the log-concavity (and therefore unimodality) of Delannoy numbers follow immediately from the proof of Theorem \ref{qlog_thm}, as we shall see in Section~\ref{sec:qlogconcave}.

The remainder of this paper is organized as follows. In Section~\ref{sec:basic}, we study basic properties of over $q$-binomial coefficients and give connections with Delannoy numbers. Then in Section~\ref{sec:qseries}, we study finite versions of the $q$-binomial theorem, a special case of the Rogers-Fine identity and the Lebesgue identity. In section \ref{sec:gene}, we focus on two-variable generalizations of classical identities involving binomial coefficients. Then in Section \ref{sec:invo}, we give the involution proof of Theorem~\ref{fin_theta}. In Section \ref{sec:qlogconcave}, we prove Theorem \ref{qlog_thm} by constructing an involution and study its implications. In Section~\ref{sec:unimodal}, we conclude with some observations and conjectures concerning the unimodality of the over-$(q,t)$-binomial coefficients $\overline{{m+n \brack n}}_{q,t}$.

\section{Basic properties and connection to Delannoy numbers}
\label{sec:basic}

The Delannoy numbers \cite{Delannoy} $D(m,n)$, also sometimes called Tribonacci numbers \cite{Alladi3}, are the number of paths from $(0,0)$ to $(m,n)$ on a rectangular grid, using only East, North and North-East steps, namely steps from $(i,j)$ to $(i+1,j)$, $(i,j+1)$, or $(i+1,j+1)$. Let $\mathcal{D}_{m,n}$ be the set of such paths. For a path $p \in \mathcal{D}_{m,n}$, we define the weight of each of its steps $p_k$ as
\[
wt(p_k) := 
\begin{cases}
0, &\text{ if it goes from $(i,j)$ to $(i+1,j)$,} \\
i, &\text{ if it goes from $(i,j)$ to $(i,j+1)$,} \\
i+1, &\text{ if it goes from $(i,j)$ to $(i+1,j+1)$.}
\end{cases}
\]
Then we define the weight $wt(p)$ of $p$ to be the sum of the weights of its steps, and $d(p)$ to be the number of North-East steps in $p$. By mapping North-East steps to overlined parts, we obtain a bijection between Ferrers diagrams of overpartitions fitting inside a $m \times n$ rectangle and Delannoy paths from the origin to $(m,n)$. Therefore, we can see that over-$(q,t)$-binomial coefficient are generating functions for Delannoy paths.

\begin{proposition}
For non-negative integers $m$ and $n$,
\[
\overline{{ m+n \brack n }}_{q,t} = \sum_{ p \in \mathcal{D}_{m,n}} t^{d(p)} q^{wt(p)}.
\]
\end{proposition}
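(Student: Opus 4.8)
The plan is to make the bijection announced just above the proposition fully explicit and then to observe that it transports the pair of statistics $(d(p),wt(p))$ to the pair (number of overlined parts, number being partitioned). Since by definition
\[
\overline{{m+n \brack n}}_{q,t} = \sum_{k,N \geq 0} \overline{p}(m,n,k,N)\, t^k q^N,
\]
it suffices to exhibit a weight-preserving bijection between the overpartitions counted by $\overline{p}(m,n,k,N)$ and the paths $p \in \mathcal{D}_{m,n}$ with $d(p)=k$ and $wt(p)=N$.

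First I would describe the map from paths to overpartitions. Given $p \in \mathcal{D}_{m,n}$, read its steps in order and record only the $n$ upward moves, that is, the North and North-East steps. To the upward move leaving the point $(i,j)$ I associate a part equal to the horizontal coordinate that the move reaches: a part of size $i$ if the move is North, and a part of size $i+1$ if the move is North-East, overlining the part in the latter case. Discarding the parts equal to $0$, which come from North steps made on the vertical axis $i=0$, yields an overpartition $\lambda$.

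Next I would check that $\lambda$ fits inside the box and is a genuine overpartition. Since the horizontal coordinate never exceeds $m$, every part is $\leq m$; since there are exactly $n$ upward moves, $\lambda$ has at most $n$ nonzero parts. The crucial point is the overpartition condition that at most one part of each size be overlined: an overlined part of size $s$ arises from a North-East step crossing from horizontal coordinate $s-1$ to $s$, and because the horizontal coordinate is non-decreasing along any Delannoy path, once the path reaches the line $\{i=s\}$ it never returns to $\{i=s-1\}$, so there can be at most one such step. Moreover, reading the parts from the top of the path downward orders equal parts with the overlined copy last, since a North-East step into column $s$ sits at a lower level than any North step within column $s$; this matches the convention that the last occurrence of each value may be overlined.

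Then I would verify that the statistics agree: by the definition of $wt$ each upward move contributes exactly its associated part to $wt(p)$ while East steps contribute $0$, so $wt(p)$ equals the sum of the parts of $\lambda$, namely $N$, and each North-East step contributes precisely one overlined part, so $d(p)$ equals the number of overlined parts $k$. The inverse map reconstructs $p$ from $\lambda$ by listing the parts in weakly increasing order, padding with zeros to reach $n$ parts, using these values as the heights of the successive upward moves, declaring a move North or North-East according to whether the corresponding part is overlined, and inserting East steps to realize the remaining horizontal increments. I expect the only delicate step to be the verification of the overpartition condition together with the last-occurrence convention, both of which hinge on the monotonicity of the horizontal coordinate; the accompanying bookkeeping of the weights is then routine.
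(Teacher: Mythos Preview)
Your argument is correct and follows exactly the bijection the paper alludes to in the sentence preceding the proposition; the paper itself gives no further proof beyond the phrase ``by mapping North-East steps to overlined parts, we obtain a bijection,'' so you have simply made that one-line sketch explicit and checked the details (in particular the last-occurrence convention for overlined parts via the monotonicity of the horizontal coordinate).
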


In this sense, we can say that over-$(q,t)$-binomial coefficients are $(q,t)$-analogues of Delannoy numbers, which generalize the $q$-Delannoy numbers introduced by Sagan \cite{Sagan} (after exchanging $t$ and $q$),
\[
D_q (m,n) = \sum_{ p \in \mathcal{D}_{m,n}} q^{d(p)}.
\]
In particular when $q=t=1$ we have
$$\overline{{ m+n \brack n }}_{1,1} = D(m,n).$$
A different $q$-analogue of Delannoy numbers has been given by Ramirez in \cite{Ramirez}.

Most of our results of \cite{DK} generalize to the new setting with the additional variable $t$. It is sufficient to keep track of the number of overlined parts in the original proofs. Here we present two of them which have an interesting connection with Delannoy numbers.
 Now we give an exact formula for $\overline{{m+n \brack n}}_{q,t}$.
\begin{theorem}
\label{th:gene} 
For non-negative integers $m$ and $n$, 
\begin{equation}
\label{eq:formula}
\overline{{m+n \brack n}}_{q,t} = \sum_{k=0}^{\min\{m,n\}} t^k q^{\frac{k(k+1)}{2}} \frac{(q)_{m+n-k}}{(q)_k(q)_{m-k}(q)_{n-k}}.
\end{equation}
\end{theorem}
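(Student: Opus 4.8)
The plan is to prove \eqref{eq:formula} combinatorially, in the spirit of \cite{DK}, by splitting each overpartition into its overlined and non-overlined parts and letting the variable $t$ record the number of overlined parts. First I would invoke the standard decomposition of an overpartition into a partition $\mu$ into distinct parts, which records the overlined parts, together with an ordinary partition $\lambda$, which records the non-overlined parts. Under this correspondence the number of overlined parts is $k := \ell(\mu)$, the size of the overpartition is $|\mu| + |\lambda|$, the largest part is $\max\{\mu_1, \lambda_1\}$, and the total number of parts is $\ell(\mu) + \ell(\lambda)$.

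Next I would translate the condition of fitting inside an $m \times n$ rectangle. Since every part must be at most $m$ and there may be at most $n$ parts in total, the two pieces decouple except through the count of parts: both $\mu$ and $\lambda$ have all parts at most $m$, and once $\mu$ has exactly $k$ parts, $\lambda$ is allowed at most $n-k$ parts. As $\mu$ consists of $k$ distinct positive integers at most $m$, the summation range is forced to be $0 \le k \le \min\{m,n\}$.

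Then I would compute the generating function of each piece. For the overlined part, subtracting the staircase $(k, k-1, \ldots, 1)$ from $\mu$ sets up a bijection with partitions inside a $k \times (m-k)$ box, so the generating function of partitions into $k$ distinct parts at most $m$, weighted by $t^k$ for the overlinings, is
\[
t^k q^{k(k+1)/2} {m \brack k}_q.
\]
For the non-overlined part, partitions with at most $n-k$ parts each at most $m$ are counted by the Gaussian polynomial ${m+n-k \brack n-k}_q$. Multiplying these and summing over $k$ yields
\[
\overline{{m+n \brack n}}_{q,t} = \sum_{k=0}^{\min\{m,n\}} t^k q^{k(k+1)/2} {m \brack k}_q {m+n-k \brack n-k}_q,
\]
and the identity ${m \brack k}_q {m+n-k \brack n-k}_q = \frac{(q)_{m+n-k}}{(q)_k (q)_{m-k}(q)_{n-k}}$ turns this into \eqref{eq:formula}.

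The only genuinely delicate point I anticipate is justifying that the rectangle condition decouples exactly as claimed, namely that the bound on the largest part applies to $\mu$ and $\lambda$ separately while the bound on the number of parts is the single shared constraint $\ell(\mu)+\ell(\lambda)\le n$; getting this decoupling right is what guarantees there is neither overcounting nor undercounting when the product of the two generating functions is formed. The remaining ingredients—the staircase bijection and the $q$-factorial simplification of the product of Gaussian polynomials—are routine, and the whole argument specializes at $t=1$ to the corresponding result of \cite{DK}, consistent with the principle that it suffices to track the overlined parts in the original proof.
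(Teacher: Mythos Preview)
Your proposal is correct and follows essentially the same approach as the paper: the paper fixes the number $k$ of overlined parts, invokes from \cite{DK} the formula $\overline{G}(m,n,k)=q^{k(k+1)/2}{m\brack k}_q{m+n-k\brack n-k}_q$, and sums against $t^k$, while you supply the combinatorial derivation of that formula via the decomposition into the distinct-parts piece $\mu$ and the ordinary partition $\lambda$. The decoupling you flag as delicate is exactly what underlies the cited formula, and your justification of it is correct.
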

\begin{proof}
As in~\cite{DK}, let $\overline{G}(m,n,k)$ denote the generating function for overpartitions fitting inside an $m \times n$ rectangle and having exactly $k$ overlined parts.
We have
\[
\overline{G}(m,n,k) =  q^{\frac{k(k+1)}{2}} {m \brack k}_q {n+m-k \brack n-k}_q
= q^{\frac{k(k+1)}{2}} \frac{(q)_{m+n-k}}{(q)_k(q)_{m-k}(q)_{n-k}}.
\]
Since $\overline{G}(m,n,k)$ is non-zero if and only if $0 \leq k \leq \min\{m,n\}$, we have
\[
\overline{{m+n \brack n}}_{q,t} =\sum_{k=0}^{\min\{m,n\}} t^k \overline{G}(m,n,k) = \sum_{k=0}^{\min\{m,n\}} t^k q^{\frac{k(k+1)}{2}} \frac{(q)_{m+n-k}}{(q)_k(q)_{m-k}(q)_{n-k}}.
\]
\end{proof}

The case $t=0$ gives the classical formula for Gaussian polynomials and the case $t=1$ corresponds to Theorem 1.1 in \cite{DK}. Lemma 3 in \cite{AllBer} is essentially another formulation of Theorem \ref{th:gene}, but their proof is more complicated as it involves several $q$-series identities, while ours is purely combinatorial. Moreover, when $t=q=1$, we obtain the following classical formula for Delannoy numbers:
$$D(m,n)= \sum_{k=0}^{\min\{m,n\}} {n \choose k} {m+n-k \choose n}.$$
Note that using $q$-multinomial coefficients
$${a+b+c \brack a,b,c}_q := \frac{(q)_{a+b+c}}{(q)_a(q)_b(q)_c},$$
we can rewrite \eqref{eq:formula} as
\begin{equation}
\label{eq:trinom}
\overline{{m+n \brack n}}_{q,t} = \sum_{k=0}^{\min\{m,n\}} t^k q^{\frac{k(k+1)}{2}} {m+n-k \brack k,m-k,n-k}_q.
\end{equation}

In the same way, the analogues of Pascal's triangle of \cite{DK} can also be generalized.
\begin{theorem} \label{PropPascal}
For positive integers $m$ and $n$, we have
\begin{align}
\label{pa1}
\overline{{m+n \brack n}}_{q,t} &= \overline{{m+n -1\brack n-1}}_{q,t} +q^n \overline{{m+n-1 \brack n}}_{q,t} + tq^n \overline{ {m+n-2 \brack n-1}}_{q,t},\\
\label{pa2}
\overline{{m+n \brack n}}_{q,t} &= \overline{{m+n -1\brack n}}_{q,t} +q^m \overline{{m+n-1 \brack n-1}}_{q,t} + tq^m \overline{{m+n-2 \brack n-1}}_{q,t}.
\end{align}
\end{theorem}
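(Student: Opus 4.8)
The plan is to prove the second recurrence \eqref{pa2} directly from the Delannoy path model introduced just above, and then to obtain the first recurrence \eqref{pa1} essentially for free from the conjugation symmetry $\overline{{m+n \brack n}}_{q,t} = \overline{{m+n \brack m}}_{q,t}$. By the Proposition, $\overline{{m+n \brack n}}_{q,t} = \sum_{p \in \mathcal{D}_{m,n}} t^{d(p)} q^{wt(p)}$, so for $m,n \geq 1$ it suffices to partition the paths $p \in \mathcal{D}_{m,n}$ according to their final step, which must be one of the three allowed steps arriving at $(m,n)$, and to delete that step to land on a path of a smaller grid.

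First I would split $\mathcal{D}_{m,n}$ into three classes according to whether the last step is East, North, or North-East. If the last step is East, it comes from $(m-1,n)$, carries weight $0$, and is not a North-East step, so deletion is a bijection onto $\mathcal{D}_{m-1,n}$ leaving $t^{d}q^{wt}$ unchanged; this class contributes $\overline{{m+n-1 \brack n}}_{q,t}$. If the last step is North, it comes from $(m,n-1)$ and, since a North step leaving $(i,j)$ has weight $i$, here carries weight $m$; deletion is a bijection onto $\mathcal{D}_{m,n-1}$ with an extra factor $q^{m}$, contributing $q^{m}\overline{{m+n-1 \brack n-1}}_{q,t}$. If the last step is North-East, it comes from $(m-1,n-1)$ and, since a North-East step leaving $(i,j)$ has weight $i+1$, carries weight $m$ while also increasing $d$ by one; deletion is a bijection onto $\mathcal{D}_{m-1,n-1}$ with an extra factor $tq^{m}$, contributing $tq^{m}\overline{{m+n-2 \brack n-1}}_{q,t}$. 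Summing the three classes gives \eqref{pa2}.

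The points to get right are pure bookkeeping: that a North step from $(m,n-1)$ and a North-East step from $(m-1,n-1)$ both produce exactly the factor $q^{m}$ (weights $i=m$ and $i+1=m$ respectively), that only the North-East case contributes the factor $t$, and that deleting the last step is a bijection onto the \emph{full} path set of the indicated smaller grid, so that the reduced box dimensions match the brackets $\overline{{m+n-1 \brack n}}$, $\overline{{m+n-1 \brack n-1}}$, $\overline{{m+n-2 \brack n-1}}$. The hypothesis that $m$ and $n$ are positive guarantees that the three source lattice points $(m-1,n)$, $(m,n-1)$, $(m-1,n-1)$ are legitimate.

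Finally I would deduce \eqref{pa1} by applying \eqref{pa2} with $m$ and $n$ interchanged, which yields $\overline{{m+n \brack m}}_{q,t} = \overline{{m+n-1 \brack m}}_{q,t} + q^{n}\overline{{m+n-1 \brack m-1}}_{q,t} + tq^{n}\overline{{m+n-2 \brack m-1}}_{q,t}$, and then rewriting each term by the symmetry $\overline{{A \brack B}}_{q,t} = \overline{{A \brack A-B}}_{q,t}$; this converts $\overline{{m+n \brack m}}$, $\overline{{m+n-1 \brack m}}$, $\overline{{m+n-1 \brack m-1}}$, $\overline{{m+n-2 \brack m-1}}$ into $\overline{{m+n \brack n}}$, $\overline{{m+n-1 \brack n-1}}$, $\overline{{m+n-1 \brack n}}$, $\overline{{m+n-2 \brack n-1}}$ respectively, reproducing \eqref{pa1} term by term. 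The only substantive thing to verify here is that the symmetry is compatible with the variable $t$, i.e. that conjugation of the Ferrers diagram preserves the number of overlined parts; this is exactly the content of the stated identity $\overline{{m+n \brack n}}_{q,t} = \overline{{m+n \brack m}}_{q,t}$. I expect this conjugation relabeling, together with correctly matching the reduced box dimensions, to be the only real source of error, since the weight computations themselves are immediate from the definition of $wt$.
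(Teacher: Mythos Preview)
Your argument is correct. The paper does not spell out a proof of this theorem; it simply notes that the recurrences of \cite{DK} generalize by ``keeping track of the number of overlined parts in the original proofs.'' Your Delannoy-path last-step decomposition is exactly the combinatorial argument this refers to, phrased in the path model the paper has just set up (the overpartition version would instead case on whether the largest part equals $m$ and, if so, whether that occurrence is overlined, which is the same trichotomy under the stated bijection). Deriving \eqref{pa1} from \eqref{pa2} via the conjugation symmetry is also standard and correct; the paper explicitly records $\overline{{m+n \brack n}}_{q,t} = \overline{{m+n \brack m}}_{q,t}$, so nothing extra is needed there.
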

Again $t=0$ gives the classical recurrences for $q$-binomial coefficients, $t=1$ gives Theorem 1.2 of~\cite{DK}, and $t=q=1$ gives the classical recurrence for Delannoy numbers:
$$D(m,n)=D(m-1,n)+D(m,n-1)+D(m-1,n-1).$$

We also obtain $(q,t)$-analogues of two other classical formulas for Delannoy numbers.
Recall that the basic hypergeometric series $_{r}\phi_{s}$ are defined by 
\[
_{r}\phi_{s} (a_1,a_2,\ldots,a_r;b_1,\ldots,b_s ;q,z) := \sum_{n \geq 0} \frac{ (a_1)_{n} (a_2)_{n} \cdots (a_r)_{n} }{ (q)_{n} (b_1)_{n} \cdots (b_s)_{n}} \left[ (-1)^n q^{n(n-1)/2} \right]^{1+s-r} z^n.
\]

We can express over-$(q,t)$-binomial coefficients using a basic hypergeometric series.
\begin{proposition}
For all $m,n$ positive integers,
\[
\overline{ {m+n \brack n }}_{q,t} = {{ m+n \brack n }_{q} }    {_{2}\phi_{1}} ( q^{-n}, q^{-m} ; q^{-n-m} ; q, -tq).
\]
\end{proposition}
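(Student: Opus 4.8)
The plan is to expand the ${_2\phi_1}$ as a power series and match it term by term against the explicit formula \eqref{eq:formula} of Theorem~\ref{th:gene}. Since here $r=2$ and $s=1$, the exponent $1+s-r$ vanishes, so the bracketed factor in the definition of ${_{r}\phi_{s}}$ equals $1$, and
\[
{_{2}\phi_{1}}(q^{-n},q^{-m};q^{-n-m};q,-tq) = \sum_{k \geq 0} \frac{(q^{-n})_k (q^{-m})_k}{(q)_k (q^{-n-m})_k}(-tq)^k.
\]
First I would establish the elementary reduction of a $q$-Pochhammer symbol at a negative power of $q$ to an ordinary one, namely
\[
(q^{-N};q)_k = (-1)^k q^{-Nk + \binom{k}{2}} \frac{(q)_N}{(q)_{N-k}},
\]
valid for $0 \le k \le N$ and giving $0$ for $k > N$; this follows by writing each factor as $1 - q^{-N+j} = -q^{-N+j}(1 - q^{N-j})$ and collecting the resulting sign, $q$-power, and telescoping product.

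Applying this identity to each of $(q^{-n})_k$, $(q^{-m})_k$, and $(q^{-n-m})_k$, I would compute the combined factor $(q^{-n})_k (q^{-m})_k/(q^{-n-m})_k$. The three signs collapse to a single $(-1)^k$, the three fractional $q$-exponents combine to $q^{\binom{k}{2}}$, and the Pochhammer symbols reorganize into $(q)_n (q)_m (q)_{m+n-k}/\bigl((q)_{n-k}(q)_{m-k}(q)_{m+n}\bigr)$. Multiplying the general term by the prefactor ${m+n \brack n}_q = (q)_{m+n}/\bigl((q)_m (q)_n\bigr)$ and by $(-tq)^k/(q)_k$ then produces substantial cancellation: the factors $(q)_{m+n}$, $(q)_m$, $(q)_n$ all cancel, the sign $(-1)^k$ cancels against the $(-1)^k$ coming from $(-tq)^k$, and the $q$-powers combine as $q^{\binom{k}{2}+k} = q^{k(k+1)/2}$.

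What remains is precisely $t^k q^{k(k+1)/2}(q)_{m+n-k}/\bigl((q)_k (q)_{m-k}(q)_{n-k}\bigr)$, the $k$-th summand in \eqref{eq:formula}. Finally I would observe that the vanishing of $(q^{-n})_k$ for $k>n$ and of $(q^{-m})_k$ for $k>m$ forces the series to terminate at $k = \min\{m,n\}$, so the a priori infinite hypergeometric sum agrees with the finite sum of Theorem~\ref{th:gene}, completing the proof. The only delicate point is the careful bookkeeping of signs and fractional $q$-exponents in the reduction step; everything else is routine cancellation.
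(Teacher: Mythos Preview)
Your proposal is correct and follows essentially the same approach as the paper: both expand the ${_2\phi_1}$ series, use the identity $(q^{-N};q)_k = (-1)^k q^{\binom{k}{2}-Nk}(q)_N/(q)_{N-k}$ to rewrite each Pochhammer factor, and then match the resulting expression against the explicit formula of Theorem~\ref{th:gene}. Your write-up is slightly more explicit about the sign and exponent bookkeeping and about why the series terminates at $k=\min\{m,n\}$, but the argument is the same.
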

\begin{proof}
We may assume $m \geq n$, as otherwise we could consider the conjugate of Ferrers diagram of the overpartitions. Using the fact that
\[
(q^{-n} ; q)_{k} = \frac{ (q;q)_{n}}{(q;q)_{n-k}} (-1)^k q^{\binom{k}{2} - nk},
\]
we derive that
\begin{align*}
{_{2}\phi_{1}} ( q^{-n}, q^{-m} ; q^{-n-m} ; q, -tq) &= \sum_{k =0}^{n} \frac{ (q^{-n})_{k} (q^{-m})_{k} }{ (q)_{k} (q^{-n-m})_{k}} (-tq)^k \\
&= \frac{ (q)_{n} (q)_{m}}{(q)_{n+m}} \sum_{k=0}^{n} \frac{ (q)_{m+n-k} t^k q^{k(k+1)/2}}{(q)_{n-k} (q)_{m-k} (q)_{k}} \\
&=  \frac{ (q)_{n} (q)_{m}}{(q)_{n+m}} \overline{ {m+n \brack n }}_{q,t}
\end{align*}
as desired.
\end{proof}

By setting $t=q=1$, we can recover the well-known formula for Delannoy numbers
\[
D(m,n) = \binom{m+n}{n} {_{2}F_{1}} (-n,-m; -m-n ; -1),
\]
where $_{2}F_{1}$ is a hypergeometric function. 

Moreover, from \cite[Appendix III.8]{GR} we have a transformation formula for the terminating series 
\[
_{2}\phi_{1} (q^{-n}, b ; c ; q,z) = \frac{ (c/b)_{n} }{(c)_{n}} b^n {_{3}\phi_{1}} (q^{-n}, b, q/z ; bq^{1-n}/c ; q , z/c ).
\]
By setting $b=q^{-m}$, $c=q^{-n-m}$, and $z=-tq$, we find another expression for over-$(q,t)$-binomial coefficients.

\begin{proposition} \label{3rd_over_prop}
For all non-negative integers $m$ and $n$, 
\[
\overline{ {m+n \brack n} }_{q,t} = \sum_{k=0}^{\min\{m ,n \}} x^k q^{k(k+1)/2} (-1/x)_{k}
{m \brack k}_{q} {n \brack k}_{q}.
\]
\end{proposition}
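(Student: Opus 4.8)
The plan is to feed the hypergeometric representation of the preceding Proposition, namely $\overline{{m+n \brack n}}_{q,t} = {m+n \brack n}_q \, {_2\phi_1}(q^{-n},q^{-m};q^{-n-m};q,-tq)$, into the quoted Appendix III.8 transformation with the substitution $b=q^{-m}$, $c=q^{-n-m}$, $z=-tq$, so that the parameter written $x$ in the statement is to be read as $t$. First I would record the three quantities produced by the transformation: the new numerator parameter $q/z=-1/t$, the single denominator parameter $bq^{1-n}/c=q$, and the new argument $z/c=-tq^{n+m+1}$. Thus the right factor becomes ${_3\phi_1}(q^{-n},q^{-m},-1/t;q;q,-tq^{n+m+1})$.

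Next I would simplify the prefactor $\dfrac{(c/b)_n}{(c)_n}\,b^n=\dfrac{(q^{-n})_n}{(q^{-n-m})_n}\,q^{-mn}$. Using the identity $(q^{-N})_k=\dfrac{(q)_N}{(q)_{N-k}}(-1)^k q^{\binom{k}{2}-Nk}$ already invoked in the previous proof (once with $N=n,k=n$ and once with $N=n+m,k=n$), a short computation shows that all powers of $q$ and all signs cancel and that this prefactor equals $\dfrac{(q)_n(q)_m}{(q)_{m+n}}$. Since this is exactly the reciprocal of ${m+n \brack n}_q$, the leading Gaussian polynomial is absorbed, leaving $\overline{{m+n \brack n}}_{q,t}={_3\phi_1}(q^{-n},q^{-m},-1/t;q;q,-tq^{n+m+1})$.

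Finally I would expand the ${_3\phi_1}$ term by term. In its $k$-th summand the denominator contributes $(q)_k(q)_k$ (since $b_1=q$), the intrinsic factor is $[(-1)^k q^{k(k-1)/2}]^{1+s-r}$ with $r=3,s=1$, hence carries exponent $-1$, and $z^k=(-t)^k q^{(n+m+1)k}$. I would again rewrite $(q^{-n})_k$ and $(q^{-m})_k$ via the same identity, turning the quotient $\dfrac{(q^{-n})_k(q^{-m})_k}{(q)_k(q)_k}$ into ${n \brack k}_q{m \brack k}_q\, q^{k(k-1)-(n+m)k}$, while $(-1/t)_k=(-1/x)_k$ survives untouched. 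The main obstacle is the exponent bookkeeping: one must verify that the power of $q$ gathered from the intrinsic factor, from $z^k$, and from the two $q$-Pochhammer rewritings telescopes precisely to $q^{k(k+1)/2}$, and that the stray factors $(-1)^k$ cancel so that only $t^k=x^k$ remains. Collecting these gives the $k$-th term $x^k q^{k(k+1)/2}(-1/x)_k\, {m \brack k}_q{n \brack k}_q$, and since $(q^{-n})_k$ and $(q^{-m})_k$ vanish for $k>\min\{m,n\}$ the sum truncates at $\min\{m,n\}$, yielding the claimed formula.
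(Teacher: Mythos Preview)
Your proposal is correct and follows exactly the route sketched in the paper: apply the Appendix III.8 transformation to the ${_2\phi_1}$ representation from the preceding Proposition with $b=q^{-m}$, $c=q^{-n-m}$, $z=-tq$, then simplify. The paper itself records only the substitution and leaves the verification implicit, whereas you have carried out the prefactor computation and the termwise expansion of the resulting ${_3\phi_1}$ in full; your exponent bookkeeping is accurate and the truncation at $\min\{m,n\}$ is justified exactly as you say.
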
 
By setting $q=t=1$, we can obtain another well-known formula for Delannoy numbers
\[
D(m,n) = \sum_{k=0}^{\min\{m,n\}} 2^k \binom{m}{k} \binom{n}{k}.
\]
Actually, the bijection given in \cite[Theorem 1.1]{LC} gives a combinatorial proof for Proposition \ref{3rd_over_prop}. As details are lengthy and we do not use this bijection later, we omit details here.

\section{Finite versions of classical $q$-series identities}
\label{sec:qseries}

\subsection{The $q$-binomial theorem}
 In this section we use the $(q,t)$-overGaussian polynomials $\overline{{m+n \brack n}}_{q,t}$ to prove new finite versions of classical $q$-series identities. Recall the $q$-binomial theorem.
\begin{theorem}[$q$-binomial theorem]
For $|t|,|z|<1$, 
\[
\sum_{k \geq 0} \frac{(t)_{k}}{(q)_{k}} z^k = \frac{(tz)_{\infty}}{(z)_{\infty}}.
\]
\end{theorem}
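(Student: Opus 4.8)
The plan is to prove the identity by the classical generating-function argument, treating both sides as analytic functions of $z$ (equivalently formal power series with coefficients in $\Q(q,t)$) and matching coefficients through a $q$-difference equation.

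First I would set $G(z) := \frac{(tz)_\infty}{(z)_\infty}$ and observe that $G$ satisfies a simple functional equation. Using the factorizations $(z)_\infty = (1-z)(qz)_\infty$ and $(tz)_\infty = (1-tz)(tqz)_\infty$, a one-line computation gives
\[
(1-z)\,G(z) = (1-tz)\,G(qz).
\]
Next I would write the unknown expansion $G(z) = \sum_{n\geq 0} a_n z^n$, normalized by $a_0 = G(0) = 1$, and insert it into the functional equation. Comparing the coefficient of $z^n$ on both sides yields $a_n(1-q^n) = a_{n-1}(1 - tq^{n-1})$, hence the recurrence
\[
a_n = \frac{1 - tq^{n-1}}{1 - q^n}\, a_{n-1}.
\]
Iterating from $a_0 = 1$ telescopes to $a_n = \frac{(t)_n}{(q)_n}$, which is exactly the coefficient appearing on the left-hand side, completing the proof.

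The substance here is analytic rather than algebraic: for $|z|,|t|<1$ the product $(z)_\infty$ has no zeros in the disk, so $G$ is holomorphic there and its Taylor coefficients are uniquely determined, legitimizing the manipulations above; the geometric decay of $(t)_n/(q)_n$ then secures convergence of the series on $|z|<1$. I expect this convergence and analyticity bookkeeping to be the only real obstacle, the coefficient recursion being entirely routine.

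Alternatively, and more in the spirit of the present paper, one can read the theorem off from the finite version recorded in Theorem \ref{fin_qbinom_thm}: letting $n\to\infty$ there, using $\lim_{n\to\infty}\overline{{n \brack j}}_{q,t} = \frac{(-tq)_j}{(q)_j}$, and then replacing $z$ by $z/q$ and $t$ by $-t/q$ produces exactly the stated identity. In this route the sole subtlety is justifying the term-by-term passage to the limit, after which the substitution is a direct check.
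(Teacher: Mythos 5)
Your proposal is correct, but your primary argument takes a genuinely different route from the paper. The paper never proves the $q$-binomial theorem directly: it recalls it as a classical fact and shows instead that it follows from its new finite identities, which is precisely your alternative route --- let $n\to\infty$ in Theorem \ref{fin_qbinom_thm} (or in Theorem \ref{fin_qbi}, giving Corollary \ref{cor}), use $\lim_{n\to\infty}\overline{{n\brack j}}_{q,t}=\frac{(-tq)_j}{(q)_j}$, then substitute $z\mapsto z/q$ and $t\mapsto -t/q$ (resp.\ $t\mapsto -t$). Your main proof, by contrast, is the classical self-contained $q$-difference argument: $G(z)=\frac{(tz)_\infty}{(z)_\infty}$ satisfies $(1-z)G(z)=(1-tz)G(qz)$, coefficient extraction gives $a_n(1-q^n)=a_{n-1}(1-tq^{n-1})$, and telescoping from $a_0=1$ yields $a_n=\frac{(t)_n}{(q)_n}$; every step checks out. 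As for what each approach buys: your functional-equation proof is the standard textbook argument and is independent of the paper's combinatorial machinery, while the paper's route is longer as a proof of this single theorem but produces a polynomial refinement with combinatorial content (overpartitions in a rectangle, the parameter $t$ counting overlined parts, the Delannoy-path interpretation), which is the actual point of the paper. Two small corrections to your analytic bookkeeping: you should state $|q|<1$ explicitly (it is tacit throughout the paper), since that is what makes $(z)_\infty$ convergent and zero-free on $|z|<1$; and the coefficients $\frac{(t)_n}{(q)_n}$ do not decay geometrically --- they tend to the finite nonzero limit $\frac{(t)_\infty}{(q)_\infty}$ --- but their boundedness already guarantees convergence of $\sum_{n\geq 0} a_n z^n$ on $|z|<1$, which is all you need.
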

We start by giving two different finite versions of the $q$-binomial theorem involving over-$(q,t)$-binomial coefficients.
We first prove combinatorially Theorem \ref{fin_qbinom_thm}.

\begin{proof}[Proof of Theorem \ref{fin_qbinom_thm}]
 Notice that $z^k q^k$ generates a column of $k$ unoverlined $1$'s. We append the partition generated by $ \overline{{ n+k-1 \brack k}}_{q,t}$ to the right of these $1$'s. Therefore, we find that the left-hand side of \eqref{fin_qbinom_id} is the generating function for the number of overpartitions with largest part $\leq n$ and no overlined $1$, where the exponent of $z$ counts the number of parts and the exponent of $t$ counts the number of overlined parts. It is clear that the right-hand side of \eqref{fin_qbinom_id} generates the same partitions.
\end{proof}

Moreover Proposition 3.1 of \cite{DK} can be easily generalized by keeping track of the number of overlined parts in the original proof, and gives another finite version of the $q$-binomial theorem.
\begin{theorem}[Generalization of Proposition 3.1 of \cite{DK}] \label{fin_qbi}
For every positive integer $n$, we have
\[
\frac{(-tzq)_{n}}{(zq)_{n}} = 1+ \sum_{k \geq 1} z^k q^k \left(  \overline{\left[ \begin{matrix} n+k-1 \\ k \end{matrix} \right]}_{q,t} + t \overline{ \left[ \begin{matrix} n+k-2 \\ k-1 \end{matrix} \right]}_{q,t} \right).
\]
\end{theorem}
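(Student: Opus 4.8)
The plan is to reduce the claim to the already-proven Theorem~\ref{fin_qbinom_thm} by a short combinatorial classification, following the suggestion that one only needs to track overlined parts in the proof of Proposition~3.1 of~\cite{DK}. First I would record the combinatorial meaning of the left-hand side: since
\[
\frac{(-tzq)_n}{(zq)_n} = \prod_{j=1}^n \frac{1+tzq^j}{1-zq^j},
\]
it is the generating function for \emph{all} overpartitions with largest part $\leq n$, where $z$ marks the number of parts, $t$ the number of overlined parts, and $q$ the size. The right-hand side of Theorem~\ref{fin_qbinom_thm}, namely $\sum_{k\ge 0} z^k q^k \overline{{n+k-1 \brack k}}_{q,t}$, was shown there to generate exactly those overpartitions with largest part $\leq n$ carrying \emph{no} overlined~$1$. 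So the remaining task is to account for the overpartitions that do have an overlined~$1$.

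Next I would split the overpartitions counted by the left-hand side into two classes according to whether an overlined $1$ occurs. Those with no overlined $1$ are generated, by Theorem~\ref{fin_qbinom_thm}, by $\sum_{k\ge 0} z^k q^k \overline{{n+k-1 \brack k}}_{q,t}$; isolating the $k=0$ term (the empty overpartition, of weight $1$) produces the constant $1$ together with the first summand of the right-hand side. For those with an overlined $1$, I would use that overlined parts are distinct, so there is exactly one overlined $1$; deleting it removes one part, one overline, and one unit of size, contributing a factor $tzq$ and leaving an overpartition with largest part $\leq n$ and no overlined $1$. Hence this class is generated by $tzq\sum_{k\ge 0} z^k q^k \overline{{n+k-1 \brack k}}_{q,t}$, and after the shift $k\mapsto k-1$ this becomes $\sum_{k\ge 1} t z^k q^k \overline{{n+k-2 \brack k-1}}_{q,t}$, the second summand. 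Adding the two classes yields the right-hand side.

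Equivalently, and as a check, the whole identity follows algebraically from the factorization $(-tzq)_n = (1+tzq)(-tzq^2)_{n-1}$ together with Theorem~\ref{fin_qbinom_thm}: multiplying $\sum_{k\ge 0} z^k q^k \overline{{n+k-1 \brack k}}_{q,t} = (-tzq^2)_{n-1}/(zq)_n$ by $(1+tzq)$ and reindexing reproduces both summands. The argument is essentially routine; the only point requiring care is the bookkeeping of the index shift in the overlined-$1$ class and the extraction of the constant term $1$, which is precisely where the two binomial coefficients $\overline{{n+k-1 \brack k}}_{q,t}$ and $\overline{{n+k-2 \brack k-1}}_{q,t}$ appear with their respective roles.
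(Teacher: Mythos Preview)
Your proof is correct. The paper does not actually spell out a proof of this theorem; it simply remarks that Proposition~3.1 of \cite{DK} generalizes by keeping track of the number of overlined parts, so your argument is as explicit as anything the paper offers. Your route---splitting overpartitions with largest part~$\leq n$ according to whether an overlined~$1$ is present, and then invoking Theorem~\ref{fin_qbinom_thm} for each class (equivalently, multiplying the identity of Theorem~\ref{fin_qbinom_thm} by $1+tzq$)---is clean and entirely in the combinatorial spirit the paper intends. If anything, it is slightly more efficient than redoing the argument of \cite{DK} from scratch, since you reuse Theorem~\ref{fin_qbinom_thm} as a black box.
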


By letting $n$ tend to infinity, we obtain the following.

\begin{corollary}[Generalization of Corollary 3.2 of \cite{DK}]
\label{cor}
Let $\overline{p} (n,k,\ell)$ be the number of overpartitions of $n$ with $k$ parts and $\ell$ overlined parts. Then,
\[
\sum_{n,k,\ell \geq 0} \overline{p} (n,k,\ell) z^k t^{\ell} q^n =\frac{(-tzq)_{\infty}}{(zq)_{\infty}} = 1+ \sum_{k \geq 1} \frac{ z^k q^{k} (-t)_{k}}{(q)_{k}}.
\]
\end{corollary}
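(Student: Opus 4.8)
The plan is to derive the corollary from Theorem~\ref{fin_qbi} by letting $n \to \infty$, and to establish the leftmost equality directly by a product expansion. Starting from the finite identity
\[
\frac{(-tzq)_{n}}{(zq)_{n}} = 1+ \sum_{k \geq 1} z^k q^k \left(  \overline{{n+k-1 \brack k}}_{q,t} + t \overline{{n+k-2 \brack k-1}}_{q,t} \right),
\]
I would first send $n$ to infinity on both sides. On the left, $(-tzq)_n/(zq)_n \to (-tzq)_\infty/(zq)_\infty$. On the right I invoke the limiting behavior recorded in the introduction, namely $\lim_{n\to\infty} \overline{{n \brack j}}_{q,t} = (-tq)_j/(q)_j$, which gives $\overline{{n+k-1 \brack k}}_{q,t} \to (-tq)_k/(q)_k$ and $\overline{{n+k-2 \brack k-1}}_{q,t} \to (-tq)_{k-1}/(q)_{k-1}$ for each fixed $k$.

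The next step is the algebraic simplification of the limiting summand. Using the factorizations $(-tq)_k = (1+tq^k)(-tq)_{k-1}$, $(q)_k = (1-q^k)(q)_{k-1}$, and $(-t)_k = (1+t)(-tq)_{k-1}$, a short computation collapses
\[
\frac{(-tq)_k}{(q)_k} + t\,\frac{(-tq)_{k-1}}{(q)_{k-1}} = \frac{(-tq)_{k-1}}{(q)_{k-1}}\cdot\frac{(1+tq^k)+t(1-q^k)}{1-q^k} = \frac{(1+t)(-tq)_{k-1}}{(q)_k} = \frac{(-t)_k}{(q)_k},
\]
which yields the rightmost expression $1+ \sum_{k \geq 1} z^k q^k (-t)_k/(q)_k$.

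Finally, for the leftmost equality I would give a direct combinatorial reading of the infinite product: writing
\[
\frac{(-tzq)_\infty}{(zq)_\infty} = \prod_{j\geq 1} \frac{1+tzq^j}{1-zq^j},
\]
each factor $1/(1-zq^j)$ generates arbitrarily many non-overlined parts equal to $j$ (each contributing $zq^j$), while each factor $(1+tzq^j)$ generates at most one overlined part equal to $j$ (contributing $tzq^j$). Taking the product over all $j$ builds an arbitrary overpartition, with $z$ marking the number of parts, $t$ the number of overlined parts, and $q$ the integer being partitioned; hence this product equals $\sum_{n,k,\ell\geq 0}\overline{p}(n,k,\ell)z^kt^\ell q^n$. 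The only point requiring care is the interchange of the limit $n\to\infty$ with the infinite sum over $k$, but this is harmless: viewed as a formal power series in $q$, each coefficient receives contributions from only finitely many values of $k$, so passing to the limit term by term is justified.
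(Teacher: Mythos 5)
Your proposal is correct and takes essentially the same route as the paper, which obtains this corollary precisely by letting $n \to \infty$ in Theorem~\ref{fin_qbi}. The details you supply---the simplification $\frac{(-tq)_{k}}{(q)_{k}} + t\,\frac{(-tq)_{k-1}}{(q)_{k-1}} = \frac{(-t)_{k}}{(q)_{k}}$, the product expansion interpreting $\frac{(-tzq)_{\infty}}{(zq)_{\infty}}$ as the overpartition generating function, and the coefficient-wise justification for passing to the limit term by term---are all sound and simply make explicit what the paper leaves to the reader.
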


Now replacing $z$ by $z/q$ and $t$ by $-t$ in the above gives the $q$-binomial theorem.

\subsection{A special case of the Rogers-Fine identity}
We now turn to the proof of Theorem \ref{fin_RF_thm}, which uses Durfee decomposition.
\begin{proof}[Proof of Theorem \ref{fin_RF_thm}]
We first observe that for every positive integer $n$, 
\begin{align*}
\sum_{k \geq 0} \overline{{ n+k-1 \brack k}}_{q,t} z^k q^k = &\sum_{k \geq 0} \frac{z^k q^{k^2+k} (-tq^2)_{k}}{(zq)_{k+1}} \overline{{ n-1 \brack k}}_{q,t}\\
+ &\sum_{k \geq 0} \frac{t z^k q^{k^2+k} (-tq^2)_{k-1}}{(zq)_{k}} \overline{{ n-2 \brack k-1}}_{q,t}.
\end{align*}
The left-hand side is the generating function for the number of overpartitions with largest part $\leq n$ and no overlined $1$, where the exponent of $z$ counts the number of parts and the exponent of $t$ counts the number of overlined parts, as in the proof of Theorem \ref{fin_qbinom_thm}. Now we consider the Durfee rectangle of size $(k+1) \times k$. We can distinguish two cases according to whether the bottom-right corner of Durfee rectangle is overlined or not.  When it is not overlined, $z^k q^{k^2 + k}$ generates the Durfee rectangle. Moreover, $\frac{ (-t zq^2)_{k}}{ (zq)_{k+1} }$ generates the overpartition below the Durfee rectangle and $\overline{{n-1 \brack k}}_{q,t}$ generates the overpartition to the right of the Durfee rectangle. When the bottom-right corner of the Durfee rectangle is overlined, $t z^k q^{k^2+k}$ generates the Durfee rectangle. Since the parts below the Durfee rectangle are less then $k+1$ in this case, they are generated by $\frac{ (-t  zq^2)_{k-1}}{ (zq)_{k} }$. Moreover, there could be no further overlined $k+1$, so $\overline{{ n- 2 \brack k-1}}_{q,t}$ generates the overpartition to the right of the Durfee rectangle. By replacing $k$ by $k+1$ in the second sum, we obtain the desired identity.
\end{proof}

\subsection{The Lebesgue identity}
Finally we also have a generalization of Sylvester's identity \cite{Syl}, which is a finite version of the Lebesgue identity. We define
\[
S(n;t,y,q) := 1 + \sum_{j \geq 1} \left( t \overline{\left[ \begin{matrix} n-1 \\ j-1 \end{matrix} \right] }_{q,t} \frac{(-tyq)_{j-1}}{(yq)_{j-1}} y^j q^{j^2} +  \overline{\left[ \begin{matrix} n \\ j \end{matrix} \right]}_{q,t} \frac{(-tyq)_{j}}{(yq)_{j}} y^j q^{j^2} \right).
\]
\begin{theorem}[Generalization of Theorem 3.4 of \cite{DK}] \label{fin_Leb}
For any positive integer $n$, 
\begin{equation} \label{Sylid}
S(n;t,y;q)= \frac{(-tyq)_{n}}{(yq)_{n}}.
\end{equation}
\end{theorem}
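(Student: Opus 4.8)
The plan is to give a combinatorial proof by reading both sides as generating functions for overpartitions with largest part $\le n$, and then decomposing such overpartitions along their Durfee squares exactly as in the proof of Theorem~3.4 of \cite{DK}, but now keeping track of the extra variable $t$ that records overlined parts. As a first step I would note that the right-hand side $\frac{(-tyq)_{n}}{(yq)_{n}}$ is the generating function for overpartitions with all parts $\le n$, where $y$ counts the number of parts and $t$ counts the number of overlined parts: indeed $\frac{1}{(yq)_{n}}=\prod_{i=1}^{n}(1-yq^{i})^{-1}$ builds the non-overlined parts of each size $1,\dots,n$, while $(-tyq)_{n}=\prod_{i=1}^{n}(1+tyq^{i})$ inserts at most one overlined part of each such size. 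This is the finite analogue of Corollary~\ref{cor} and also follows from Theorem~\ref{fin_qbi}.

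Next I would classify an overpartition $\lambda$ with largest part $\le n$ by the size $j$ of its Durfee square, i.e. the largest $j$ with $\lambda_{j}\ge j$; the empty overpartition ($j=0$) accounts for the leading $1$. For $j\ge 1$ the $j\times j$ Durfee square contributes $y^{j}q^{j^{2}}$, the portion lying below the square is an arbitrary overpartition with parts $\le j$, and the portion to the right of the square, consisting of the excesses $\lambda_{i}-j$, is an overpartition of largest part $\le n-j$ whose rows coincide with the Durfee rows and so does not contribute to the part-counter $y$. These two pieces are generated by $\frac{(-tyq)_{j}}{(yq)_{j}}$ and by an over-$(q,t)$-binomial coefficient respectively, and the overlines of parts of size $>j$ are preserved under passage to the excesses, so they are correctly recorded by the $t$ in these factors.

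The delicate point, which I expect to be the main obstacle, is the bookkeeping of overlined parts of size exactly $j$, i.e. rows that end precisely on the right edge of the Durfee square: such a row creates no cell to the right, so its overline is tracked neither by the excess overpartition nor by the bare $q^{j^{2}}$ of the square. Since at most one part of each value may be overlined, there is at most one overlined $j$, sitting at the bottom-most row of value $j$, and I would split into two cases according to whether this row is the bottom row of the Durfee square. If $\lambda_{j}=\overline{j}$, then no part $j$ lies below the square, the excesses occupy at most $j-1$ rows, and the lone overline must be inserted by hand; this produces the first summand $t\,\overline{{n-1 \brack j-1}}_{q,t}\frac{(-tyq)_{j-1}}{(yq)_{j-1}}y^{j}q^{j^{2}}$, where $t$ records the overlined corner, the shift to $\overline{{n-1 \brack j-1}}_{q,t}$ reflects that only $j-1$ rows can extend to the right, and the shift to $\frac{(-tyq)_{j-1}}{(yq)_{j-1}}$ reflects that the parts below are $\le j-1$. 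Otherwise row $j$ is not an overlined $j$, all value-$j$ rows of the top block are non-overlined, any overlined $j$ lives below the square, and the excesses may fill all $j$ rows; this gives the second summand $\overline{{n \brack j}}_{q,t}\frac{(-tyq)_{j}}{(yq)_{j}}y^{j}q^{j^{2}}$.

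Finally I would verify that this case split is an honest bijection: in each case the number of top rows of value exactly $j$ is forced as $j$ minus the number of positive excesses, and the position of the unique overlined $j$ (at the corner, below the square, or absent) is dictated by the case, so every overpartition with largest part $\le n$ is produced exactly once with the right monomial in $y$, $t$, $q$. Summing over $j\ge 1$ and adding the empty overpartition yields $S(n;t,y;q)$, whence $S(n;t,y;q)=\frac{(-tyq)_{n}}{(yq)_{n}}$. The only genuine work beyond this is checking that no overline at the Durfee corner is dropped or double-counted; once that is pinned down the identity is immediate.
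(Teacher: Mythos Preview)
Your proof is correct and follows exactly the approach the paper intends: the authors do not give a separate proof of Theorem~\ref{fin_Leb} but simply present it as the generalization of Theorem~3.4 of \cite{DK} obtained by ``keeping track of the number of overlined parts,'' and your Durfee-square decomposition with the case split on whether the corner $\lambda_j$ equals $\overline{j}$ is precisely that argument spelled out. The two summands in $S(n;t,y,q)$ correspond to your two cases, and your handling of the delicate point (that an overlined part of size exactly $j$ must sit at the Durfee corner, forcing parts below to be $\le j-1$ and leaving only $j-1$ rows of excesses) is exactly what produces the shifts to $\overline{{n-1\brack j-1}}_{q,t}$ and $\frac{(-tyq)_{j-1}}{(yq)_{j-1}}$ in the first summand.
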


The new variable $t$ allows us to deduce Lebesgue's identity from Theorem \ref{fin_Leb}.
\begin{corollary}[Lebesgue's identity]
\label{th:leb}
For $|q| <1$, 
\[
\sum_{k \geq 0} \frac{ (-tq)_{k} q^{k(k+1)/2}}{(q)_{k}} = \frac{ (-tq^2 ;q^2)_{\infty}}{ (q;q^2)_{\infty}}.
\]
\end{corollary}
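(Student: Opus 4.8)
The plan is to derive Corollary~\ref{th:leb} as the $n\to\infty$ limit of the finite identity of Theorem~\ref{fin_Leb}. First I would let $n\to\infty$ in \eqref{Sylid}. By the limiting behaviour $\lim_{n\to\infty}\overline{{n\brack j}}_{q,t}=\frac{(-tq)_{j}}{(q)_{j}}$ recorded in the introduction, every over-$(q,t)$-binomial coefficient occurring in $S(n;t,y;q)$ becomes an infinite product, while the right-hand side $\frac{(-tyq)_{n}}{(yq)_{n}}$ tends to $\frac{(-tyq)_{\infty}}{(yq)_{\infty}}$. This yields a two-parameter identity in $t$ and $y$, all of whose products have base $q$.

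The next step is to specialize this limit so that the base-$q^{2}$ product on the right of Lebesgue's identity appears. Since the target right-hand side $\frac{(-tq^{2};q^{2})_{\infty}}{(q;q^{2})_{\infty}}$ mixes the bases $q$ and $q^{2}$, it cannot be produced by a single base-$q$ product; accordingly I would apply the limiting identity with $q$ replaced by $q^{2}$ and then set $y=q^{-1}$. This choice is essentially forced by the shape of the answer: with $y=q^{-1}$ the denominator $(yq^{2};q^{2})_{\infty}$ collapses to the odd part $(q;q^{2})_{\infty}$, which is exactly the denominator of Lebesgue's product, and simultaneously the two Pochhammer quotients carried by $S$ --- namely $\frac{(-tq^{2};q^{2})_{j}}{(q^{2};q^{2})_{j}}$ coming from $\overline{{n\brack j}}_{q^{2},t}$ and $\frac{(-tyq^{2};q^{2})_{j}}{(yq^{2};q^{2})_{j}}=\frac{(-tq;q^{2})_{j}}{(q;q^{2})_{j}}$ --- merge via the elementary splitting $(-tq;q^{2})_{j}(-tq^{2};q^{2})_{j}=(-tq;q)_{2j}$ (and the analogous one for $(q;q)_{2j}$) into a single base-$q$ quotient at index $2j$. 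A harmless relabeling $t\mapsto tq$ then sends $(-tq;q^{2})_{\infty}$ to $(-tq^{2};q^{2})_{\infty}$, matching the right-hand side of Corollary~\ref{th:leb} verbatim.

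The delicate, and genuinely hard, step is to show that after these specializations the left-hand side collapses to the single Lebesgue sum $\sum_{k\ge0}\frac{(-tq)_{k}}{(q)_{k}}q^{k(k+1)/2}$. The base change turns the quadratic exponent $q^{j^{2}}$ into $q^{2j^{2}}$, which with $y=q^{-1}$ becomes $q^{2j^{2}-j}=q^{\binom{2j}{2}}$, so the series produced directly carries, as leading exponents, only the triangular numbers $\binom{2j}{2}$ (that is, the $\binom{k+1}{2}$ with $k$ odd, together with $k=0$), whereas Lebesgue's sum carries $q^{\binom{k+1}{2}}$ for every $k$. The two summands of $S$ --- distinguished, exactly as in the proof of Theorem~\ref{fin_RF_thm}, by whether the corner of the Durfee rectangle is overlined --- are the ingredients that must be reorganized, together with the tails of the geometric factors $1/(q)_{2j}$, to recover all values of $k$. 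Verifying that this reorganization reproduces Lebesgue's coefficients is the crux; it is precisely here that the extra variable $t$ is indispensable, since it furnishes the free parameter (effectively $a=-tq$) that the $t=1$ specialization of \cite{DK} could not provide.
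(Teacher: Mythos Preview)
Your plan is correct and coincides with the paper's own argument: the paper substitutes $q\mapsto q^{2}$, $t\mapsto tq$, $y\mapsto 1/q$ in \eqref{Sylid}, lets $n\to\infty$, and then carries out exactly the algebraic collapse you flag as ``the crux'' --- merging the two Pochhammer quotients via $(-tq;q^{2})_{j}(-tq^{2};q^{2})_{j}=(-tq;q)_{2j}$ and then reorganizing the two summands (indexed essentially by $2j-1$ and $2j$) into the single Lebesgue sum over all $k$. The only thing missing from your proposal is the explicit execution of that final reorganization, which the paper does in half a dozen lines of elementary manipulation.
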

\begin{proof}
 In \eqref{Sylid}, we replace $q$ by $q^2$, $t$ by $tq$, and $y$ by $1/q$. Then, by taking the limit as $n$ goes to infinity, we find that
\begin{align*}
\frac{(-tq^2 ; q^2)_{\infty}}{ (q;q^2)_{\infty}} 
&= 1+ \sum_{j \geq 1} tq \frac{ (-tq^3;q^2)_{j-1} (-tq^2 ; q^2)_{j-1}}{(q^2;q^2)_{j-1} (q;q^2)_{j-1}} q^{2j^2-j} + \frac{ (-tq^3;q^2)_{j} (-tq^2 ;q^2)_{j}}{ (q^2;q^2)_{j} (q;q^2)_{j}} q^{2j^2 - j} \\
&= 1 + \sum_{j \geq 1} tq \frac{(-xq^2)_{2j-2}}{(q)_{2j-2}} q^{2j^2 - j} + \frac{(-tq^2)_{2j}}{(q)_{2j}} q^{2j^2 - j} \\
&=1+ \sum_{j \geq 1} \frac{ (-tq)_{2j-1}}{(q)_{2j-1}} \frac{tq(1-q^{2j-1})}{1+tq} q^{2j^2 - j}  + \frac{(-tq)_{2j}}{(q)_{2j}} \frac{1+tq^{2j+1}}{1+tq} q^{2j^2-j} \\
&=1+ \sum_{j \geq 1} \frac{ (-tq)_{2j-1}}{(q)_{2j-1}} \( 1 - \frac{1+tq^{2j}}{1+tq} \)q^{2j^2 - j} + \frac{(-tq)_{2j}}{(q)_{2j}} \( q^{2j} - \frac{1-q^{2j}}{1+tq} \) q^{2j^2 - j } \\
&= 1+ \sum_{j \geq 1} \frac{ (-tq)_{2j-1}}{(q)_{2j-1}} q^{2j^2 - j} + \frac{(-tq)_{2j}}{(q)_{2j}} q^{2j^2 + j} \\
&= \sum_{j \geq 0} \frac{ (-tq)_{j} q^{j(j+1)/2}}{(q)_{j}}. 
\end{align*}
\end{proof}
Thus we can see Theorem \ref{fin_Leb} as a finite version of the Lebesgue identity. Two different finite versions were given by Rowell \cite{Rowell}, and Alladi and Berkovich \cite{AllBer}, respectively. Alladi \cite{Alladi2} gave another proof of the Lebesgue identity in terms of partitions into distinct odd parts.

\section{Generalizations of $q$-binomial coefficients identities}
\label{sec:gene}
In this section, we prove  two-variable generalizations of Gaussian polynomial identities. As a first example, by tracking the number of parts, one can easily see that the following identity \cite[Eqn. (3.3.9)]{Abook} holds:
\begin{equation}
\label{eq:and}
\sum_{j=0}^{n} q^j { m+j \brack j}_q = { n+m+1 \brack m+1}_q,
\end{equation}
which is a $q$-analogue of the classical identity
\[
\sum_{j=0}^{n} \binom{m+j}{j} = \binom{n+m+1}{m+1}.
\]
By tracking the number of overlined and non-overlined parts separately, we can prove the following two-parameter generalization of \eqref{eq:and}.
\begin{proposition}
For positive integers $m$ and $n$,
\[
\overline{{m+n+1 \brack m+1}}_{q,t} =1 + \sum_{j=1}^{n} q^{j} \( \overline{{m+j \brack j}}_{q,t} + t \overline{{m+j-1 \brack j-1}}_{q,t} \).
\]
\end{proposition}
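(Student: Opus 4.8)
The plan is to prove the identity combinatorially, as suggested by the phrase ``tracking the number of overlined and non-overlined parts separately.'' Reading the left-hand side directly, $\overline{{m+n+1 \brack m+1}}_{q,t}$ is the generating function for overpartitions with largest part $\le n$ and at most $m+1$ parts, where $q$ records the size and $t$ the number of overlined parts. I would classify these overpartitions according to the value $j$ of their largest part, which runs over $0 \le j \le n$. The value $j=0$ corresponds to the empty overpartition and accounts for the isolated term $1$ on the right-hand side, so it remains to show that, for each fixed $j \ge 1$, the overpartitions with largest part exactly $j$ (and at most $m+1$ parts) are generated by $q^j \big(\overline{{m+j \brack j}}_{q,t} + t\,\overline{{m+j-1 \brack j-1}}_{q,t}\big)$.

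For fixed $j \ge 1$ I would split these overpartitions into two classes according to whether $j$ occurs as a non-overlined part. In the first class, at least one non-overlined copy of $j$ is present; deleting one such copy is a bijection onto the overpartitions with largest part $\le j$ and at most $m$ parts, and the deletion removes a factor $q^j$. By the conjugation symmetry of the Ferrers diagram, the latter set is generated by $\overline{{m+j \brack j}}_{q,t}$, so this class contributes $q^j \overline{{m+j \brack j}}_{q,t}$. In the second class, $j$ occurs only as the overlined part $\overline{j}$ and there is no non-overlined $j$, forcing all remaining parts to be $\le j-1$; deleting $\overline{j}$ removes a factor $t q^j$ and yields an overpartition with largest part $\le j-1$ and at most $m$ parts, which by conjugation is generated by $\overline{{m+j-1 \brack j-1}}_{q,t}$. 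Summing the two contributions over $1 \le j \le n$ and adding the empty case produces exactly the right-hand side.

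The point requiring care is that the two classes are disjoint and exhaustive and that each deletion is a genuine bijection compatible with the overpartition convention that the overlined copy is the last occurrence of its value; in particular one must check that reinserting a non-overlined $j$ into an overpartition that already contains $\overline{j}$ again gives a legal overpartition, which it does since non-overlined copies precede the overlined one. As a quicker but less transparent alternative, I would argue by telescoping: applying the Pascal-type recurrence \eqref{pa2} with top index $m+n+1$ and bottom index $m+1$, and then using conjugation, gives $\overline{{m+n+1 \brack m+1}}_{q,t} - \overline{{m+n \brack m+1}}_{q,t} = q^n\big(\overline{{m+n \brack n}}_{q,t} + t\,\overline{{m+n-1 \brack n-1}}_{q,t}\big)$, which is precisely the $j=n$ summand; iterating this relation down to the base value $\overline{{m+1 \brack m+1}}_{q,t}=1$ finishes the proof.
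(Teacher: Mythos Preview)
Your combinatorial argument is correct and is precisely the approach the paper has in mind: classifying by the value of the largest part and then splitting according to whether that part has a non-overlined occurrence is exactly what ``tracking the number of overlined and non-overlined parts separately'' refers to. The telescoping alternative via \eqref{pa2} is also valid and gives a clean second proof.
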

By taking the limit when $m \to \infty$, we also find that
\[
\frac{(-tq)_{n}}{(q)_{n}} = 1 + \sum_{j=1}^{n} q^j\( \frac{ (-tq)_{j}}{(q)_{j}} + t \frac{ (-tq)_{j-1}}{(q)_{j-1}} \) = 1+ \sum_{j=1}^{N} \frac{ (-t)_{j} q^j}{(q)_{j}}.
\]

By setting $q=t=1$, we find that  
\[
D(m+1,n) = 1 + \sum_{j=1}^{n} \( D(m,j) + D(m,j-1) \).
\]

Secondly, we find an over-Gaussian polynomial generalization of the identity \cite[Eqn. 3.3.10]{Abook}  
\[
\sum_{k=0}^{h} { n \brack k}_q { m \brack h-k}_q q^{(n-k)(h-k)} = {m+n \brack h}_q,
\]
which is a $q$-analogue of the classical identity
\[
\sum_{k=0}^{h} \binom{n}{k}\binom{m}{h-k} = \binom{n+m}{h}.
\]
\begin{proposition} \label{2nd_over_prop}
For positive integers $m, n \geq  h$, 
\[
\sum_{k=0}^{h}  q^{(n-k)(h-k)} \( \overline{{ n \brack k}}_{q,t} \overline{{ m \brack h-k}}_{q,t} + t \overline{{ n-1 \brack k}}_{q,t} \overline{{ m-1 \brack h-k-1}}_{q,t} \)  = \overline{ {m+n \brack h}}_{q,t}.
\]
\end{proposition}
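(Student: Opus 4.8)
The plan is to give a combinatorial proof using the Delannoy path model of Section~\ref{sec:basic}, extending the classical lattice-path proof of the $q$-Vandermonde identity. By the Proposition relating over-$(q,t)$-binomial coefficients to weighted Delannoy paths, the right-hand side $\overline{{m+n \brack h}}_{q,t}$ is the generating function $\sum_p t^{d(p)} q^{wt(p)}$ over all paths $p \in \mathcal{D}_{m+n-h,\,h}$, i.e. paths from $(0,0)$ to $(m+n-h,h)$ using East, North and North-East steps. The key observation is that each step increases the quantity $x+y$ by exactly $1$ (for an East or North step) or by exactly $2$ (for a North-East step). Since a path starts at $x+y=0$ and ends at $x+y=m+n>n$, it crosses the antidiagonal $x+y=n$ exactly once, and I would split $p$ at this crossing.

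There are exactly two ways the crossing can occur, and these produce the two terms of the identity. \emph{First}, the path may visit a lattice point $(n-k,k)$ with $x+y=n$; then $p$ decomposes uniquely as a path from $(0,0)$ to $(n-k,k)$ followed by a path from $(n-k,k)$ to $(m+n-h,h)$. The first piece lies in $\mathcal{D}_{n-k,\,k}$, hence contributes $\overline{{n \brack k}}_{q,t}$, and the second, after translating its start to the origin, lies in $\mathcal{D}_{m-h+k,\,h-k}$, contributing $\overline{{m \brack h-k}}_{q,t}$. \emph{Second}, the path may jump over the antidiagonal by a North-East step from $(n-1-k,k)$ (where $x+y=n-1$) to $(n-k,k+1)$ (where $x+y=n+1$); then $p$ decomposes as a path in $\mathcal{D}_{n-1-k,\,k}$, the crossing North-East step, and a path in $\mathcal{D}_{m-h+k,\,h-k-1}$, contributing $t\,\overline{{n-1 \brack k}}_{q,t}\,\overline{{m-1 \brack h-k-1}}_{q,t}$, the factor $t$ recording the crossing step. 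In both cases the decomposition is plainly reversible, so summing over $k$ yields a bijection.

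It then remains to account for the $q$-weight. Since $d(p)$ is additive over the pieces (with the crossing North-East step in the second case supplying the extra $t$), I would only need to check that translating the second piece back to the origin costs exactly $q^{(n-k)(h-k)}$ in both cases. Here I would use the explicit weight function: shifting a sub-path's starting $x$-coordinate by $n-k$ raises the weight of each North and each North-East step by $n-k$ and leaves East steps unchanged, so the total cost is $(n-k)$ times the number of North and North-East steps in the translated portion, i.e. $(n-k)$ times its vertical extent. In the first case that extent is $h-k$, giving $(n-k)(h-k)$ directly; in the second case the portion above the crossing step has extent $h-k-1$, while the crossing North-East step from $(n-1-k,k)$ itself has weight $n-k$, and $(n-k)+(n-k)(h-k-1)=(n-k)(h-k)$ again. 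Thus both terms carry the common prefactor $q^{(n-k)(h-k)}$, establishing the identity.

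The main obstacle is precisely this second case: the presence of North-East steps means a Delannoy path need not land on the antidiagonal $x+y=n$ at all, and isolating the unique crossing step, verifying that its weight combines with the translation offset to reproduce the same factor $q^{(n-k)(h-k)}$, and matching the shifted endpoints to $\overline{{n-1 \brack k}}_{q,t}$ and $\overline{{m-1 \brack h-k-1}}_{q,t}$ is the crux that forces the extra $t$-term absent from the classical $q$-Vandermonde identity. Setting $t=0$ kills the second case and recovers the $q$-identity, while $q=t=1$ recovers the classical Vandermonde-type convolution for Delannoy numbers.
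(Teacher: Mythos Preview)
Your proof is correct. It is essentially the Delannoy-path translation of the paper's own argument: the paper works directly with the Ferrers diagram of an overpartition in a $(m+n-h)\times h$ box and decomposes it via the Durfee rectangle of shape $(n-k)\times(h-k)$, splitting into two cases according to whether the bottom-right corner of that rectangle is overlined or not. Under the bijection of Section~\ref{sec:basic} between overpartitions and Delannoy paths, that corner cell is exactly where the path meets the antidiagonal $x+y=n$, and the corner being non-overlined versus overlined corresponds precisely to your two cases of landing on the antidiagonal versus jumping over it with a North-East step. So the two proofs are the same decomposition viewed through the two models.

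What your version buys is a cleaner bookkeeping of the $q$-weight: the translation cost $(n-k)(h-k)$ drops out mechanically from the shift in starting $x$-coordinate times the vertical extent of the second piece, and the identity $(n-k)+(n-k)(h-k-1)=(n-k)(h-k)$ in the second case makes it transparent why both terms share the same prefactor. The paper's Durfee-rectangle phrasing, by contrast, makes the connection to the classical $q$-Vandermonde proof (and to the other Durfee-rectangle arguments elsewhere in the paper, e.g.\ Theorem~\ref{fin_RF_thm}) more visible.
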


\begin{proof}
For an overpartition $\la$ generated by the right hand side, we consider the largest rectangle of the form $(n-k) \times (h-k)$ fitting inside the Ferrers diagram of $\la$, i.e. its Durfee rectangle of size $(n-k) \times (h-k)$. It is clear that such a $k$ is uniquely determined, and as $\la$ has at most $h$ parts, $k$ is between $0$ and $h$. We have two cases according to whether the bottom right corner of the Durfee rectangle is overlined or not. In the case where it is non-overlined, the overpartition on the right side of the Durfee rectangle does fit inside a $(m-h+k) \times (h-k)$ rectangle and the overpartition below the Durfee rectangle is inside a $(n-k) \times k$ rectangle. The generating function of such partitions is $q^{(n-k)(h-k)}  \overline{{ n \brack k}}_{q,t} \overline{{ m \brack h-k}}_{q,t} .$  In the case where the bottom right corner is overlined, we can see that the overpartition on the right side should be inside a $(m-h+k) \times (h-k-1)$ rectangle and the overpartition below the Durfee rectangle fits inside a $(n-k-1) \times k$ rectangle, the generating function of such partitions equals $t  q^{(n-k)(h-k)}\overline{{ n-1 \brack k}}_{q,t} \overline{{ m-1 \brack h-k-1}}_{q,t}$.
\end{proof}

By setting $q=t=1$ and $m=m+h$ in Proposition \ref{2nd_over_prop}, we find that for $n,m \geq h  >0$,
\[
D(m+n,h) = \sum_{k=0}^{h} \( D(n-k,k)D(m+k,h-k) + D(n-k-1,k) D(m+k,h-k-1) \).
\]

Finally, in \cite{PS},  Prellberg and Stanton used the following identity
\[
\frac{1}{(x)_{n}} = \sum_{m=0}^{n-1} \( { n+m -1 \brack 2m } q^{2m^2} \frac{x^{2m}}{(x)_{m}} + { n+m \brack 2m+1 } q^{2m^2+m} \frac{x^{2m+1}}{(x)_{m+1}} \)
\]
to prove that for all $n$, the coefficients of
 \[
 (1-q) \frac{1}{(q^n )_{n}} + q
 \]
are non-negative.
  
By employing Durfee rectangle dissection according to whether the size of the Durfee rectangle is $(m+1) \times 2m$ or $m \times (2m-1)$ and whether the corner of Durfee rectangles is overlined or not, we can deduce an overpartition version.

\begin{theorem}
For any positive integer $n$,
\[
\begin{aligned}
\frac{ (-tzq)_{n} }{ (zq)_{n} } &= 1+ \sum_{m=1}^{n-1} \( \overline{ {n+m-1 \brack 2m } }_{q,t} + t \overline{ {n+m-2 \brack 2m-1 } }_{q,t}  \) z^{2m} q^{2m^2+2m} \frac{ (-tzq)_{m} }{(zq)_{m}}   \\
&+\sum_{m=1}^{n} \overline{ {n+m-1 \brack 2m-1 } }_{q,t} z^{2m-1} q^{2m^2 - m} \frac{ (-tzq)_{m} }{(zq)_{m}} 
\\& + \sum_{m=1}^{n} \overline{ {n+m-2 \brack 2m-2 } }_{q,t} t z^{2m-1} q^{2m^2-m} \frac{ (-tzq)_{m-1} }{(zq)_{m-1}} .
\end{aligned}
\]
\end{theorem}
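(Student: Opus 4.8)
The plan is to prove the identity combinatorially, by a generalized Durfee rectangle dissection in the spirit of the proofs of Theorem~\ref{fin_RF_thm} and Proposition~\ref{2nd_over_prop}, upgrading the classical dissection of Prellberg and Stanton to overpartitions. First I would read off the left-hand side: exactly as in Theorem~\ref{fin_qbi}, the product $\frac{(-tzq)_n}{(zq)_n}=\prod_{j=1}^{n}\frac{1+tzq^j}{1-zq^j}$ is the generating function for overpartitions with largest part $\le n$, where $z$ records the number of parts, $t$ the number of overlined parts, and $q$ the size. The term $1$ on the right-hand side accounts for the empty overpartition, so it remains to classify the nonempty ones.

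Next I would set up the Prellberg--Stanton family of nested rectangles. To a nonempty overpartition $\lambda$ with largest part $\le n$ I attach the largest rectangle fitting inside its Ferrers diagram among the alternating shapes $B(m)$, having $m$ columns and $2m-1$ rows (weight $q^{2m^2-m}$, contributing $z^{2m-1}$), and $A(m)$, having $m+1$ columns and $2m$ rows (weight $q^{2m^2+2m}$, contributing $z^{2m}$). These nest as $B(1)\subset A(1)\subset B(2)\subset A(2)\subset\cdots$, where $A(m)$ is obtained from $B(m)$ by adjoining a row and a column and $B(m+1)$ from $A(m)$ by adjoining a row. Maximality of the chosen rectangle translates into a bound on the parts lying strictly below it: for $A(m)$ the next shape $B(m+1)$ fails to fit precisely when $\lambda_{2m+1}\le m$, and for $B(m)$ the next shape $A(m)$ fails to fit precisely when $\lambda_{2m}\le m$, so in both cases the region below the rectangle consists of parts $\le m$. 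This is what the factor $\frac{(-tzq)_m}{(zq)_m}$ encodes.

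Then I would split each case according to whether the bottom-right corner cell of the chosen rectangle is overlined, using the overpartition rule that only the last occurrence of a value may be overlined. For a rectangle with $c$ columns and last row of index $r$, all rectangle rows satisfy $\lambda_i\ge c$, so the value $c$ occurs among the rectangle rows only if $\lambda_r=c$, and then its last occurrence is the corner. In type $A$ the region below has parts $\le m<m+1=c$, so $c$ never occurs below; hence the corner is overlined exactly when $\lambda_{2m}=m+1$ is overlined, this overline is recorded by the explicit factor $t$, the region to the right loses its last row (passing from $\overline{{n+m-1\brack 2m}}_{q,t}$ to $\overline{{n+m-2\brack 2m-1}}_{q,t}$, both with largest part $\le n-m-1$), and the region below is unaffected; summing the two corner possibilities gives the first sum. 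In type $B$ the boundary value $c=m$ also occurs among the parts $\le m$ below, so an overlined corner, being the last occurrence of $m$, forbids any further part equal to $m$; consequently the region below shrinks from parts $\le m$ to parts $\le m-1$ (that is, $\frac{(-tzq)_m}{(zq)_m}$ becomes $\frac{(-tzq)_{m-1}}{(zq)_{m-1}}$) and the region to the right loses a row ($\overline{{n+m-1\brack 2m-1}}_{q,t}$ becomes $\overline{{n+m-2\brack 2m-2}}_{q,t}$, both with largest part $\le n-m$). The non-overlined corner yields the second sum and the overlined corner the third; the ranges $1\le m\le n-1$ for type $A$ and $1\le m\le n$ for type $B$ are forced by the constraint that the largest part is $\le n$.

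The main obstacle is the overline bookkeeping along the boundary column and row of the rectangle: one must verify that each overline of the boundary value $c$ is recorded exactly once---by the corner factor $t$, by the region to the right, or by the region below, but never twice---and, in particular, justify the asymmetry whereby overlining the corner shrinks the region below in type $B$ but not in type $A$. The clean resolution rests on the observation above that the boundary value $c$ occurs among the rectangle rows only at its last occurrence, the corner, together with the last-occurrence convention. A useful consistency check is to set $t=0$: the $t$-terms vanish, every over-$(q,t)$-binomial collapses to an ordinary Gaussian polynomial, and after the substitution $x=zq$ the identity reduces precisely to the Prellberg--Stanton identity, confirming both the dissection and the ranges of summation.
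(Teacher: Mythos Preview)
Your proposal is correct and follows exactly the approach the paper sketches: a Durfee rectangle dissection with the two Prellberg--Stanton shapes $(m+1)\times 2m$ and $m\times(2m-1)$, refined by whether the bottom-right corner is overlined. In fact the paper's proof is only the one-sentence indication ``by employing Durfee rectangle dissection according to whether the size of the Durfee rectangle is $(m+1)\times 2m$ or $m\times(2m-1)$ and whether the corner of Durfee rectangles is overlined or not,'' so your write-up supplies precisely the details (the maximality conditions forcing the region below to have parts $\le m$, and the asymmetry between types $A$ and $B$ coming from whether the corner value $c$ can reappear below) that the paper leaves to the reader.
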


By taking the limit $n \to \infty$, we find that
\[
\begin{aligned}
\frac{ (-tzq)_{\infty} }{ (zq)_{\infty} } &= 1+ \sum_{m=1}^{\infty} \( \frac{ (-tq)_{2m}}{(q)_{2m}} + t \frac{ (-tq)_{2m-1}}{(q)_{2m-1}} \) z^{2m} q^{2m^2+2m} \frac{ (-tzq)_{m} }{(zq)_{m}}   \\
&\quad +\sum_{m=1}^{\infty} \frac{ (-tq)_{2m-1}}{(q)_{2m-1}} z^{2m-1} q^{2m^2 - m} \frac{ (-tzq)_{m} }{(zq)_{m}} 
\\&\quad + \sum_{m=1}^{\infty} \frac{ (-tq)_{2m-2}}{(q)_{2m-2}} t z^{2m-1} q^{2m^2-m} \frac{ (-tzq)_{m-1} }{(zq)_{m-1}} \\
&=\sum_{m=0}^{\infty}  z^{2m} q^{2m^2+2m} \frac{(-tq)_{2m} (-tzq)_{m} }{(q)_{2m} (zq)_{m}} \(1+tzq^{m+1} \) \\
&\quad+ \sum_{m=1}^{\infty} z^{2m-1} q^{2m^2 - m} \frac{ (-tq)_{2m-1}(-tzq)_{m} }{(q)_{2m-1} (zq)_{m}} \( 1 + tzq^{3m} \). 
\end{aligned}
\]
This can be viewed as an overpartition analogue of 
\[
(-zq)_{\infty} = \sum_{k \geq 0} z^k q^{k(3k+1)/2} (1 + zq^{2k+1} ) \frac{(-zq)_{k}}{(q)_{k}},
\]
which becomes Euler's pentagonal number theorem when $z=-1$.

Numerics suggest an overpartition analogue of the result of Prellberg and Stanton.

\begin{conjecture}
\label{conj:prellberg}
For all positive integers $n$, the coefficients of
 \[
 (1-q) \frac{ (-q^n)_{n}}{(q^n)_{n}} + q
 \]
are non-negative. 
\end{conjecture}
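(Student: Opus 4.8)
The plan is to imitate Prellberg and Stanton's strategy \cite{PS}, using the overpartition analogue of their identity established immediately above. Specializing that identity at $t=1$ and $z=q^{n-1}$ (so that $zq=q^n$ and $-tzq=-q^n$) expresses the generating function
\[
f(q):=\frac{(-q^n)_n}{(q^n)_n}
\]
as $1$ plus a sum of terms, each of which is a product of an over-binomial coefficient $\overline{{\cdot \brack \cdot}}_{q,1}$ (a polynomial with non-negative coefficients), a monomial in $q$, and a factor of the form $\frac{(-q^n)_m}{(q^n)_m}$ (a power series with non-negative coefficients, since $\frac{1+x}{1-x}$ has non-negative coefficients). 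Writing $f=1+S$ with $S$ this manifestly non-negative sum, the quantity in the conjecture becomes
\[
(1-q)f+q=(1-q)(1+S)+q=1+(1-q)S .
\]
Thus the conjecture is equivalent to the assertion that $(1-q)S$ has non-negative coefficients, i.e. that the series $S=f-1$ is \emph{non-decreasing} (each coefficient is at least the preceding one).

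First I would record the clean combinatorial reformulation. Since $f=\prod_{j=n}^{2n-1}\frac{1+q^j}{1-q^j}$, the coefficient $a_N:=[q^N]f$ counts the overpartitions of $N$ all of whose parts lie in the window $\{n,n+1,\dots,2n-1\}$. The computation above shows that $(1-q)f+q$ has non-negative coefficients if and only if $a_N\ge a_{N-1}$ for every $N\ge 2$; the single descent $a_1<a_0$ (present whenever $n\ge2$) is exactly the one repaired by the additive $+q$. Hence the conjecture is precisely the statement that the sequence $(a_N)_{N\ge 1}$ is non-decreasing. The most natural way to prove this, and the one most in keeping with the combinatorial emphasis of this paper, is to construct an explicit weight-increasing injection from the overpartitions of $N-1$ with parts in $[n,2n-1]$ into those of $N$; alternatively, one can work algebraically from the identity, using that a product (and a sum) of non-negative non-decreasing power series is again non-negative and non-decreasing, and trying to show that each summand of $S$ is non-decreasing.

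The main obstacle lies in both routes and is the same phenomenon. Algebraically, the over-binomial factors $\overline{{\cdot\brack\cdot}}_{q,1}$ are only \emph{unimodal}, like ordinary Gaussian polynomials, and not monotone, so $(1-q)$ times a single summand of $S$ generally has negative coefficients; the monotonicity must be recovered from the companion factor $\frac{(-q^n)_m}{(q^n)_m}$ together with the large $q$-shift $q^{2m^2+2mn}$, presumably through a telescoping argument that groups consecutive values of $m$ rather than treating them individually. Combinatorially, because the smallest admissible part is $n>1$, one cannot increase the weight by simply adjoining a part of size $1$; the injection must redistribute mass within the bounded window $[n,2n-1]$ and simultaneously manage the overline attached to the last copy of each part size, which is exactly where the difficulty concentrates. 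I would first settle the small cases (small $n$, and $N$ below the first threshold where $a_N$ becomes positive) directly, then attempt the injection by analyzing the largest part $L$: increasing $L$ by $1$ when $L<2n-1$, and designing a compensating move (splitting a part of size $2n-1$, or toggling an overline) when $L=2n-1$, checking injectivity throughout. Verifying that such a move is globally well-defined and injective across all overpartitions in the window is where I expect the real work to be.
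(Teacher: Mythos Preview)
The statement you are addressing is labeled a \emph{Conjecture} in the paper; it is presented as an open problem suggested by numerics, and the paper offers no proof. There is therefore nothing on the paper's side to compare your attempt against.

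As for the proposal itself: your reformulation is correct. Writing $f=\frac{(-q^n)_n}{(q^n)_n}=\sum_{N\ge 0}a_Nq^N$, one indeed has $(1-q)f+q=1+(1-q)(f-1)$, and non-negativity of the coefficients is equivalent, for $n\ge 2$, to the monotonicity $a_N\ge a_{N-1}$ for all $N\ge 2$; the single drop $a_1=0<1=a_0$ is precisely what the additive $+q$ cancels. Specializing the Durfee-rectangle identity at $t=1$, $z=q^{n-1}$ to write $f=1+S$ with $S$ coefficientwise non-negative is also fine.

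However, what you have written is a plan, not a proof, and you say so yourself. The crux---showing that $S$ (equivalently $f-1$) has non-decreasing coefficients---is left open. You correctly observe that the over-binomial factors in $S$ are only unimodal, so term-by-term monotonicity fails and some telescoping or regrouping is needed on the algebraic side; on the combinatorial side, the absence of a part of size $1$ in the window $[n,2n-1]$ blocks the trivial injection, and you have not specified a map that handles the boundary case $L=2n-1$ together with the overline bookkeeping. Until one of these is carried through, the conjecture remains unproved. In short: sound reformulation, honest identification of the obstacle, but the obstacle is the whole problem.
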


\section{The involution proof of Theorem \ref{fin_theta}}
\label{sec:invo}
We now prove Theorem \ref{fin_theta}, using an involution similar to Franklin's proof of Euler's Pentagonal Numbers Theorem.

For convenience, we allow non-overlined $0$ as a part. Then, we can interpret the coefficient of $q^N$ in
\[
(-1)^k  \overline{{n \brack k}}_{q,1}
\]
as the number of overpartitions of $N$ into ``exactly'' $k$ parts $\leq n-k$ with weight $(-1)^k$. Let $\mathcal{O}_{k,n}$ be the set of above-mentioned overpartitions. For an overpartition $\lambda \in \mathcal{O}_{k,n}$ with $k \leq n$, we denote by $\pi$ the overpartition below its Durfee square and by $\mu$ the conjugate of the overpartition on the right of the Durfee square. If the size of the Durfee square is $d$ ($\le k$), then $\pi$ has $k-d$ parts and $\mu$ has less than $N-k-d$ parts. Define $s(\pi)$ to be the smallest nonzero part of $\pi$ and $s(\mu)$ to be the smallest part of $\mu$. (Note that $\mu$ does not have 0 as a part). If there is no nonzero part in $\pi$ or $\mu$ then we define $s(\pi) = 0$ or $ s(\mu) =0$ accordingly. We also define $s_2 (\pi)$ (resp. $s_2(\mu)$) to be the second smallest nonzero part of $\pi$ (resp. $\mu$).

We build a sign-reversing involution $\phi$ on $\mathcal{O}_{n} = \cup_{0 \le k \le n} \mathcal{O}_{k,n}$ as follows:
\begin{itemize}
\item[Case 1.] If $s(\pi) = s(\mu) =0$, then $\phi(\lambda)=\lambda$. This case is invariant under this map.
\item[Case 2.] If $s(\pi)=0$ and $s(\mu)>0$ or $s(\pi) > s(\mu)$, $\phi(\lambda)$ is obtained by moving $s(\mu)$ below $s(\pi)$. Then, the resulting partition is in $\mathcal{O}_{k+1,n}$ since it has now $k+1$ parts and the size of the largest part is decreased by $1$, and thus it does not violate the maximum part condition for $\mathcal{O}_{k+1,n}$.  

\begin{figure}
\includegraphics[scale=1]{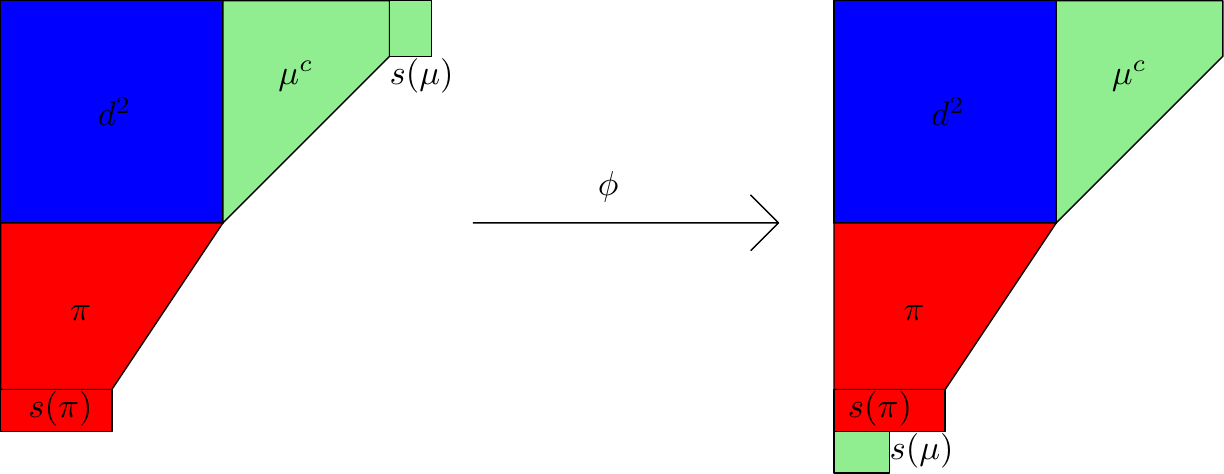}
\caption{The case 2 of the involution $\phi$}
\end{figure}

\item[Case 3.] If $s(\pi) < s(\mu)$, 
\begin{itemize}
\item[Case 3.1] if $s_2(\pi) = s(\pi)$ and $s(\pi)$ is not overlined, we overline $s(\pi)$ and $s_2(\pi)$ and move $s(\pi)$ to the right of $s(\mu)$.

\item[Case 3.2] if $s_2(\pi) > s(\pi)$ or $s(\pi)$ is overlined, $\phi(\lambda)$ is obtained by moving $s(\pi)$ to the right of $s(\mu)$.

\begin{figure}
\includegraphics[scale=1]{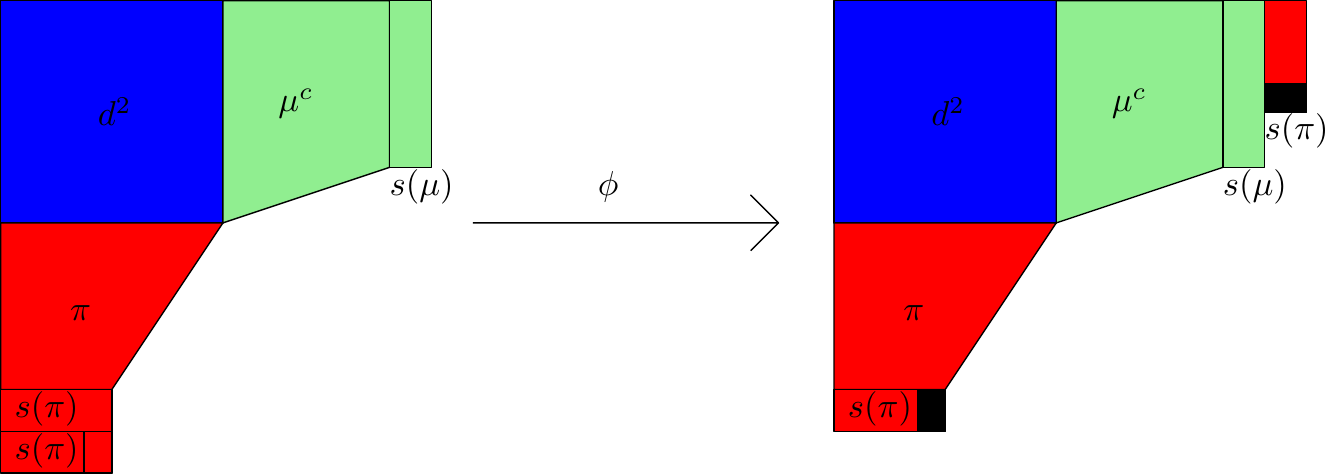}
\caption{The case 3.1 of the involution $\phi$}
\end{figure}
\end{itemize}
In both cases, the resulting overpartition is in $\mathcal{O}_{k-1,N}$.

\item[Case 4.] $s(\pi) = s(\mu)$. We have to consider different subcases according to whether $s(\pi)$ and $s(\mu)$ are overlined or not.

For convenience, we define $\chi ( a ) = 1$ if $a$ is an overlined part and $\chi(a)=0$ if $a$ is a non-overlined part.
\begin{itemize}
\item[Case 4.1.] If $\chi( s(\mu) )=\chi(s(\pi))=1$, we move $s(\mu)$ below $s(\pi)$ and un-overline both $s(\pi)$ and $s(\mu)$. Note that the resulting overpartition is in $\mathcal{O}_{k+1,n}$.
\item[Case 4.2.] If $\chi( s(\mu)) =1$ and $\chi(s(\pi))=0$, we move $s(\mu)$ below $s(\pi)$. The resulting overpartition is in $\mathcal{O}_{k+1,n}$.
\item[Case 4.3.] If $\chi ( s(\mu) ) = 0$ and $\chi(s(\pi)) =1 $, we move $s (\pi)$ to the right of $s(\mu)$ and this gives an overpartition in $\mathcal{O}_{k-1,n}$.
\item[Case 4.4.] If $\chi ( s(\mu) ) = \chi(s(\pi))=0$
\begin{itemize}
\item[Case 4.4.1.] if $s_2(\pi) = s(\pi)$, then overline $s_2(\pi)$ and $s(\pi)$ and move $s (\pi)$ to the right of $s(\mu)$.
\item[Case 4.4.2.] if $s_2(\pi) > s(\pi)$ or $s_2 (\pi) =0$, we move $s (\pi)$ to the right of $s(\mu)$.
\end{itemize}
In both cases, this gives an overpartition in $\mathcal{O}_{k-1,n}$. 
\end{itemize}
\end{itemize}

Before proving $\phi$ is an involution, here we give one example.

\begin{example*}
For an overpartition $(5,\overline{5},3,2,0) \in \mathcal{O}_{5,10}$, the size of the Durfee square is $3$, $\pi = (2,0)$, and $\mu = (2, \overline{2})$. Thus, $s(\pi)=s(\mu)=2$. Since $\chi(\mu)=1$, we move $\overline{2}$ in $\mu$ below $s(\pi)$. As a result, we have a new overpartition with $\pi = (2, \overline{2},0)$ and $\mu = (2)$, which gives the overpartition $(4,4,3,2,\overline{2},0) \in \mathcal{O}_{6,10}$. Note that $\phi ((4,4,3,2,\overline{2},0)) = (5,\overline{5},3,2,0) \in \mathcal{O}_{5,10}$ as we expected. 
\end{example*}

Now we prove that this is true in general.

\begin{proposition}
The map $\phi$ is an involution.
\end{proposition}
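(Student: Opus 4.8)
The plan is to verify that $\phi$ is a sign-reversing involution by checking, for each case of the definition, that applying $\phi$ twice returns the original overpartition. The essential observation is that the four main cases are organized into pairs of inverse operations: Case 2 and Case 3 undo each other, and within Case 4 the subcases pair up as 4.1 with 4.4.1, 4.2 with 4.4.2, and 4.3 with itself (or rather, the overlining/un-overlining operations are designed to be mutually inverse). First I would record precisely what data the map reads off from $\lambda$ — the Durfee square size $d$, the parts $\pi$ below it, the conjugate parts $\mu$ to its right, and the quantities $s(\pi)$, $s(\mu)$, $s_2(\pi)$, together with the overlining indicators $\chi(s(\pi))$, $\chi(s(\mu))$ — and confirm that these suffice to determine which case $\lambda$ falls into.

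The heart of the argument is to show that $\phi$ maps each case to a configuration lying in the \emph{inverse} case, so that the second application of $\phi$ reverses the first. Concretely, I would check the following reciprocities. In Case 2 we move $s(\mu)$ below $s(\pi)$, increasing the part count; the image has a new smallest part of $\pi$ that is strictly smaller than (or incomparable in the required way to) the smallest part of $\mu$, so that the image falls into Case 3 and moving it back to the right of $s(\mu)$ recovers $\lambda$. Symmetrically, Case 3 feeds back into Case 2. The delicate bookkeeping is in Case 4, where the equality $s(\pi)=s(\mu)$ must be preserved or correctly tracked through the overlining changes: in Case 4.1 we un-overline both parts after moving, so the image lands in the configuration handled by Case 4.4.1, which re-overlines them and moves the part back; Cases 4.2 and 4.4.2 pair analogously, and Case 4.3 is checked to be an involution on its own once the overlined/non-overlined roles of $s(\pi)$ and $s(\mu)$ are swapped by the move.

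For each transition I would verify three things: that the resulting object is a genuine overpartition in the claimed set $\mathcal{O}_{k\pm 1,n}$ (the part-size and part-count bounds, already asserted in the case definitions, must be re-examined after the move so that no maximum-part condition is violated); that the case into which the image falls is exactly the partner case; and that the partner operation returns $\lambda$ verbatim, including the positions and overlining of every part. The conditions involving $s_2(\pi)$ in Cases 3.1 and 4.4.1 are precisely the mechanism that prevents ambiguity when two equal smallest parts are present, so I would pay particular attention to confirming that the overlining of $s_2(\pi)$ correctly signals, on the return trip, that a repeated part was split off.

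The main obstacle I anticipate is Case 4 and its interaction with the $s_2(\pi)$ conditions. The difficulty is that when $s(\pi)=s(\mu)$ and parts are equal, moving a part can change which part is now smallest and can alter the $s_2$ comparison, so one must argue carefully that the image's smallest-part data place it unambiguously in the intended partner subcase and not in a neighboring one. I would handle this by tracking, in each subcase of Case 4, the exact multiset of smallest parts and their overlining before and after the move, and confirming that the $\chi$-values and the $s_2$-comparison in the image are exactly those that trigger the inverse subcase. Once these finitely many transitions are checked, sign-reversal is immediate since every non-fixed case changes $k$ by $\pm 1$ and hence flips the sign $(-1)^k$, while Case 1 comprises the fixed points.
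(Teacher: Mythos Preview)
Your overall strategy---case-by-case verification that $\phi\circ\phi$ is the identity---is exactly what the paper does. However, the specific case-pairings you propose are wrong and would fail if you tried to carry them out. Case~2 does not always land in Case~3: after moving $s(\mu)$ into $\pi$, the new smallest part of $\pi$ equals the old $s(\mu)$ while the new $s(\mu')$ equals the old $s_2(\mu)$; so when $s_2(\mu)=s(\mu)$ the image satisfies $s(\pi')=s(\mu')$ and lies in Case~4 (specifically 4.3 or 4.4.2, according to whether $s(\mu)$ was overlined), not in Case~3. Conversely, Case~3.1 lands in Case~4.1, and Case~3.2 with $s_2(\pi)=s(\pi)$ and $s(\pi)$ overlined lands in Case~4.2. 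Within Case~4, your pairing of 4.2 with 4.4.2 and of 4.3 with itself is also incorrect: in Case~4.3 the part being moved, $s(\pi)$, is overlined, so after the move the new $s(\mu')$ is overlined and the image lies in Case~4.2 (if $s_2(\pi)=s(\pi)$) or in Case~2 (if $s_2(\pi)>s(\pi)$), never in Case~4.3; and Case~4.4.2 lands in Case~2, not in Case~4.2.

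The actual transition structure, as the paper verifies, is governed at each step by whether $s_2$ of the sequence being \emph{depleted} equals or strictly exceeds the part just moved: equality sends the image into Case~4, strict inequality into Cases~2 or~3. You correctly anticipated that the $s_2$ conditions are the crux, but you must track $s_2(\mu)$ just as carefully as $s_2(\pi)$, and the resulting web of reciprocities (for instance $4.1\leftrightarrow\{3.1,\,4.4.1\}$, $4.2\leftrightarrow\{3.2,\,4.3\}$, $4.3\leftrightarrow\{2,\,4.2\}$, $4.4.2\leftrightarrow 2$) is more interconnected than the clean bilateral pairing you sketched.
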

\begin{proof}
We need to prove that for every overpartition $\lambda$ in $\mathcal{O}_{n}$, we have $\phi(\phi(\lambda))=\lambda.$ Here also, we need to distinguish several cases.
\begin{itemize}
\item[Case 1.] If $s(\pi) = s(\mu) =0$, then $\phi(\lambda)=\lambda$, so $\phi(\phi(\lambda))=\lambda.$
\item[Case 2.] If $s(\pi) > s(\mu)$,
	\begin{itemize}
	\item[Case 2.a.] if $s_2(\mu) > s(\mu)$, then $\phi(\lambda)$ is obtained by moving $s(\mu)$ below $s(\pi)$. Thus $\phi(\lambda)$ is in the case 3.2 and we obtain $\phi(\phi(\lambda))$ by moving $s(\mu)$ back to its initial place. Therefore $\phi(\phi(\lambda))=\lambda.$
	\item[Case 2.b.] if $s_2(\mu) = s(\mu)$ and $s(\mu)$ is overlined, then $\phi(\lambda)$ is in the case 4.3 and $\phi(\phi(\lambda))=\lambda.$
	\item[Case 2.c.] if $s_2(\mu) = s(\mu)$ and $s(\mu)$ is non-overlined, then $\phi(\lambda)$ is in the case 4.4.2 and $\phi(\phi(\lambda))=\lambda.$
	\end{itemize}
\item[Case 3.] If $s(\pi) < s(\mu)$,
	\begin{itemize}
	\item[Case 3.a.] if $s_2(\pi) = s(\pi)$ and $s(\pi)$ is not overlined, $\lambda$ is in the case 3.1 and $\phi(\lambda)$ is obtained by overlining $s(\pi)$ and $s_2(\pi)$ and moving $s(\pi)$ to the right of $s(\mu)$. Thus $\phi(\lambda)$ is in the case 4.1 and we obtain $\phi(\phi(\lambda))$ by moving $s(\pi)$ back to its initial place and un-overlining $s(\pi)$ and $s_2(\pi)$ again. Therefore $\phi(\phi(\lambda))=\lambda.$
	\item[Case 3.b.] if $s_2(\pi) = s(\pi)$ and $s(\pi)$ is overlined, $\lambda$ is in the case 3.2 and $\phi(\lambda)$ is obtained by moving $s(\pi)$ to the right of $s(\mu)$. Thus $\phi(\lambda)$ is in the case 4.2 and we obtain $\phi(\phi(\lambda))$ by moving $s(\pi)$ back to its initial place. Therefore $\phi(\phi(\lambda))=\lambda.$
	\item[Case 3.c.] if $s_2(\pi) > s(\pi)$, $\lambda$ is in the case 3.2, $\phi(\lambda)$ is in the case 2, and $\phi(\phi(\lambda))=\lambda.$
	\end{itemize}
\item[Case 4.] If $s(\pi) = s(\mu)$,
	\begin{itemize}
	\item[Case 4.1.] if $\chi( s(\mu) )=\chi(s(\pi))=1$,
		\begin{itemize}
		\item[Case 4.1.a.] if $s_2(\mu) = s(\mu)$, then $\phi(\lambda)$ is obtained by moving $s(\mu)$ under $s(\pi)$ and un-overlining both. Thus $\phi(\lambda)$ is in case 4.4.1 and we get $\phi(\phi(\lambda))$ by moving $s(\mu)$ back to its initial place and overlining $s(\pi)$ and $s(\mu)$ again. Therefore $\phi(\phi(\lambda))=\lambda.$
		\item[Case 4.1.b.] if $s_2(\mu) > s(\mu)$, then $\phi(\lambda)$ is in case 3.1 and $\phi(\phi(\lambda))=\lambda.$
		\end{itemize}
	\item[Case 4.2.] if $\chi( s(\mu)) =1$ and $\chi(s(\pi))=0$, 
		\begin{itemize}
		\item[Case 4.2.a.] if $s_2(\mu) = s(\mu)$, then $\phi(\lambda)$ is obtained by moving $s(\mu)$ under $s(\pi)$. Thus $\phi(\lambda)$ is in case 4.3 and $\phi(\phi(\lambda))=\lambda.$
		\item[Case 4.2.b.] if $s_2(\mu) > s(\mu)$, then $\phi(\lambda)$ is in case 3.2 and $\phi(\phi(\lambda))=\lambda.$
		\end{itemize}
	\item[Case 4.3.] if $\chi ( s(\mu) ) = 0$ and $\chi(s(\pi)) =1 $,
		\begin{itemize}
		\item[Case 4.3.a.] if $s_2(\pi) = s(\pi)$, then $\phi(\lambda)$ is obtained by moving $s(\pi)$ to the right of $s(\mu)$. Thus $\phi(\lambda)$ is in case 4.2 and $\phi(\phi(\lambda))=\lambda.$
		\item[Case 4.2.b.] if $s_2(\pi) > s(\pi)$, then $\phi(\lambda)$ is in case 2 and $\phi(\phi(\lambda))=\lambda.$
		\end{itemize}
	\item[Case 4.4.] if $\chi ( s(\mu) ) = \chi(s(\pi))=0$
		\begin{itemize}
		\item[Case 4.4.1.] if $s_2(\pi) = s(\pi)$, then $\phi(\lambda)$ is obtained by overlining $s_2(\pi)$ and $s(\pi)$ and moving $s (\pi)$ to the right of $s(\mu)$. Thus $\phi(\lambda)$ is in case 4.1 and we get $\phi(\phi(\lambda))$ by moving $s(\pi)$ back to its initial place and un-overlining $s(\pi)$ and $s(\mu)$ again. Therefore $\phi(\phi(\lambda))=\lambda.$
		\item[Case 4.4.2.] if $s_2(\pi) > s(\pi)$, then $\phi(\lambda)$ is in case 2 and $\phi(\phi(\lambda))=\lambda.$
		\end{itemize}
	\end{itemize}
\end{itemize}
Thus in every case, $\phi(\phi(\lambda))=\lambda.$
\end{proof}

Now we are finally ready to prove Theorem \ref{fin_theta}.
\begin{proof}
From the sign reversing involution $\phi$, we see that only square overpartitions survive after pairing $\la \in \mathcal{O}_n$ and $\phi(\la) \in \mathcal{O}_n$. Moreover, the square overpartition of $j^2$ (with $0 \leq j \leq \lfloor n/2 \rfloor$) is in $\mathcal{O}_{k,n}$ for $k$ from $j$ to $n-j$. Thus, the sum of weights is
\[
\sum_{ k = j}^{n-j} (-1)^k =
\begin{cases}
0, &\text{if $n$ is odd,} \\
(-1)^j, &\text{if $n$ is even,}
\end{cases}
\]
as the summation runs over $n-2j+1$ consecutive integers from $j$. By considering overlined and non-overlined square overpartitions, we arrive at
\[
\sum_{k=0}^{n} (-1)^k  \overline{{n \brack k}}_{q,1} =
\begin{cases}
0, &\text{if $n$ is odd,} \\
1 + 2 \sum_{j=1}^{n/2} (-1)^j q^{j^2}, &\text{if $n$ is even,}
\end{cases}
\]
as there is no overlined partition for the empty partition.
\end{proof}

A proof of Corollary \ref{Fine_Cor} follows from the simple observation that the right-hand side of Corollary \ref{Fine_Cor} corresponds to having exactly $k$ positive parts in the involution instead of $k$ non-negative parts.
 
Finally, by setting $q=t=1$ in Theorem \ref{fin_theta}, we obtain the following.
\begin{corollary}
For all positive integers $n$,
\[
\sum_{k=0}^{n} (-1)^k  D(n-k,k) = 
\begin{cases}
0, &\text{if $n$ is odd,} \\
-1, &\text{if $n \equiv 2 \pmod{4}$,} \\
1, &\text{if $n \equiv 0 \pmod{4}$.} 
\end{cases}
\]
\end{corollary}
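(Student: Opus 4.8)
The plan is to obtain this corollary purely as a specialization of Theorem \ref{fin_theta}, in which $t$ is already fixed to $1$, by additionally setting $q=1$. First I would invoke the connection to Delannoy numbers established earlier in Section \ref{sec:basic}: since $\overline{{m+n \brack n}}_{1,1}=D(m,n)$, matching indices gives $\overline{{n \brack k}}_{1,1}=D(n-k,k)$. Consequently the left-hand side of Theorem \ref{fin_theta} specializes at $q=1$ to exactly $\sum_{k=0}^{n}(-1)^k D(n-k,k)$, the quantity we wish to evaluate.

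Next I would evaluate the right-hand side of Theorem \ref{fin_theta} at $q=1$. When $n$ is odd the right-hand side is identically $0$, which immediately yields the first case of the corollary. When $n$ is even, every factor $q^{j^2}$ collapses to $1$, so the right-hand side reduces to the finite alternating sum $\sum_{j=-n/2}^{n/2}(-1)^j$, which no longer depends on $q$.

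The only remaining step is to compute this elementary alternating sum. Writing $N=n/2$ and using the symmetry $(-1)^{-j}=(-1)^j$, I would rewrite it as $1+2\sum_{j=1}^{N}(-1)^j$. The inner alternating sum equals $0$ when $N$ is even and $-1$ when $N$ is odd, so the total equals $1$ for $N$ even and $-1$ for $N$ odd. Translating the parity of $N=n/2$ back to $n$, the case $N$ even corresponds to $n\equiv 0 \pmod 4$ (value $1$) and the case $N$ odd to $n\equiv 2 \pmod 4$ (value $-1$), matching the stated formula.

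Since every step is either a specialization of an already-proved identity or the evaluation of a short alternating sum, there is no genuine obstacle here. The only points requiring care are correctly matching the indices in $\overline{{n \brack k}}_{1,1}=D(n-k,k)$ and tracking the parity of $n/2$ when reading off the final sign.
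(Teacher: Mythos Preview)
Your proposal is correct and matches the paper's approach exactly: the corollary is obtained by setting $q=t=1$ in Theorem~\ref{fin_theta} and using $\overline{{n \brack k}}_{1,1}=D(n-k,k)$. You simply spell out the evaluation of $\sum_{j=-n/2}^{n/2}(-1)^j$ that the paper leaves implicit.
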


\section{$(q,t)$-log-concavity of the over $q$-binomial coefficients}
\label{sec:qlogconcave}

In this section, we prove Theorem \ref{qlog_thm} by constructing an involution. Before starting the proof, we introduce some notation. Let $\overline{\mathcal{P}} $ denote the set of overpartitions of non-negative integers, and $\overline{\mathcal{P}} ( m,n )$ the set of overpartitions fitting inside a $m \times n$ rectangle. We also write $\#_o (\la)$ for the number of overlined parts in $\la$ and $|\la|$ for the weight of $\la$  (i.e. the sum of its parts).

\begin{proof}[Proof of Theorem \ref{qlog_thm}]
To prove Theorem \ref{qlog_thm}, we want to find an injection $\phi$ from $\overline{\mathcal{P}} (n-k+1, k-1 ) \times \overline{\mathcal{P}} ( n-\ell-1, \ell +1  )$ to 
$\overline{\mathcal{P}} (n-k, k ) \times \overline{\mathcal{P}} ( n-\ell, \ell  )$, such that, if $\phi(\la,\mu)=(\eta,\rho),$ then $|\la|+|\mu| = |\eta|+|\rho|$ and $\#_o (\la)+ \#_o (\mu) = \#_o (\eta) + \#_o (\rho).$

We generalize the proof in \cite{Butler} to overpartitions. We define two maps $\mathcal{A}$ and $\mathcal{L}$ on $\overline{\mathcal{P}} \times \overline{\mathcal{P}}$, and take $\phi$ to be the restriction of $\mathcal{L} \circ \mathcal{A}$ to the domain  $\overline{\mathcal{P}} (n-k+1, k-1 ) \times \overline{\mathcal{P}} ( n-\ell-1, \ell +1 ).$
We obtain the injectivity of $\phi$ by showing that
\begin{enumerate}
\item $\mathcal{A}$ and $\mathcal{L}$ are involutions on $\overline{\mathcal{P}} \times \overline{\mathcal{P}}$,
\item $ \mathcal{A} \left( \overline{\mathcal{P}} (n-k+1, k-1) \times \overline{\mathcal{P}} ( n-\ell-1, \ell +1  ) \right) \subset \overline{\mathcal{P}} (n-k, k-1) \times \overline{\mathcal{P}} (n-\ell, \ell +1 ),$
\item  $ \mathcal{L} \left( \overline{\mathcal{P}} (n-k, k-1) \times \overline{\mathcal{P}} ( n-\ell, \ell +1) \right) \subset \overline{\mathcal{P}} (n-k, k) \times \overline{\mathcal{P}} (n-\ell, \ell ).$
\end{enumerate}

Let us start by defining $\mathcal{A}$. For a given overpartition pair $(\la, \mu) \in \overline{\mathcal{P}} \times \overline{\mathcal{P}}$, we define $I$ by the largest integer satisfying
\begin{equation} \label{t_cond}
\la_I  -\mu_{I+1} \geq
\begin{cases}
 \ell -k +1, &\text{ if $\la_I$ is not overlined,} \\
 \ell -k +2, &\text{ if $\la_I$ is overlined,} 
\end{cases}
\end{equation}
where we define $\mu_{i+1} =0$ if $\la_i >0$, but the number of parts in $\mu$ is less than $i+1$. If there is no such $I$, we define $I=0$. Now we define
\[
\mathcal{A} (\la, \mu) =(\gamma, \tau),
\]
where 
\[
\begin{aligned}
\gamma &:= ( \mu_1 + (\ell -k +1), \ldots, \mu_{I} + (\ell -k +1), \la_{I+1}, \la_{I+2}, \ldots    ), \\
\tau &:= (\la_1 - (\ell -k +1), \ldots, \la_{I} - (\ell -k +1), \mu_{I+1}, \mu_{I+2}, \ldots ).
\end{aligned}
\]
Note that if $\lambda_i$ (resp. $\mu_i$), $i \leq I$ was overlined (resp. non-overlined) in $\lambda$ (resp. $\mu$), then $\la_i - (\ell -k +1)$ (resp. $\mu_i + (\ell -k +1)$) is overlined (resp. non-overlined) in $\tau$ (resp. $\gamma$).

Before defining $\mathcal{L}$, let us introduce two maps $\mathcal{S}$ and $\mathcal{C}$ on $\overline{\mathcal{P}} \times \overline{\mathcal{P}}$ by
\[
\mathcal{S} (\la, \mu) := (\mu, \la) \quad\text{and}\quad \mathcal{C} (\la,\mu) := (\la^c, \mu^c).
\]
Then we define $\mathcal{L}$ as
\[
\mathcal{L} := \mathcal{S} \circ \mathcal{C} \circ \mathcal{A} \circ \mathcal{C} \circ \mathcal{S}.
\]

We now want to verify that $(i)$ is satisfied. Since $\mathcal{S}$ and $\mathcal{C}$ are involutions on $\overline{\mathcal{P}} \times \overline{\mathcal{P}}$, we only need to show that $\mathcal{A}$ is an involution.

First of all, let us verify that $\mathcal{A}$ is well defined, i.e. if  $\mathcal{A} (\la, \mu) =(\gamma, \tau)$ and $(\la, \mu) \in \overline{\mathcal{P}} \times \overline{\mathcal{P}}$, then $\gamma$ and $\tau$ are also overpartitions.
By definition $( \mu_1 + (\ell -k +1), \ldots, \mu_{I} + (\ell -k +1)), (\la_{I+1}, \la_{I+2}, \ldots ),$ $(\la_1 - (\ell -k +1), \ldots, \la_{I} - (\ell -k +1))$ and $(\mu_{I+1}, \mu_{t+2}, \ldots )$ are overpartitions so we only need to check that
\begin{equation}
\label{isover1}
\mu_{I} + (\ell -k +1) \geq \begin{cases} \la_{I+1} \text{ if $\mu_{I} + (\ell -k +1)$ is not overlined}\\
\la_{I+1} +1 \text{ if $\mu_{I} + (\ell -k +1)$ is overlined,}
\end{cases}
\end{equation}
and
\begin{equation}
\label{isover2}
\la_{I} - (\ell -k +1) \geq \begin{cases} \mu_{I+1} \text{ if $\la_{I} - (\ell -k +1)$ is not overlined}\\
\mu_{I+1} +1 \text{ if $\la_{I} - (\ell -k +1)$ is overlined.}
\end{cases}
\end{equation}
Equation \eqref{isover2} is clear by \eqref{t_cond}. Let us turn to \eqref{isover1}.
By definition of $I$, we have
\begin{equation*}
\mu_{I+2} + (\ell -k +1) \geq
\begin{cases}
 \lambda_{I+1} +1 , &\text{ if $\lambda_{I+1}$ is not overlined,} \\
\lambda_{I+1}, &\text{ if $\lambda_{I+1}$ is overlined,} 
\end{cases}
\end{equation*}
If $\mu_I$ is not overlined, then $\mu_I \geq 
\mu_{I+2}$, so $$\mu_{I} + (\ell -k +1) \geq \la_{I+1}.$$
If $\mu_I$ is overlined, then by definition of an overpartition $\mu_I \geq 
\mu_{I+2} +1$, so $$\mu_{I} + (\ell -k +1) \geq \la_{I+1}+1.$$
This completes the verification of \eqref{isover1}.

Now we want to check that $\mathcal{A}$ is an involution. Let $(\la, \mu) \in \overline{\mathcal{P}} \times \overline{\mathcal{P}}$ and $(\gamma,\tau) =  \mathcal{A} (\la,\mu).$ We want to show that $\mathcal{A} (\gamma,\tau)= (\la,\mu).$
By definition of $I$ and $\mathcal{A}$, the parts with indices $\geq I+1$ of $\gamma$ (resp. $\tau$) are exactly the same as those of $\la$ (resp. $\mu$) and will therefore not be moved when we apply $\mathcal{A}$ again. Therefore the only thing left to check is that
$$\gamma_I  -\tau_{I+1} \geq
\begin{cases}
 \ell -k +1, &\text{ if $\gamma_I$ is not overlined,} \\
 \ell -k +2, &\text{ if $\gamma_I$ is overlined,} 
\end{cases}$$
that is that
$$\mu_I +(\ell -k+1) -\mu_{I+1} \geq
\begin{cases}
 \ell -k +1, &\text{ if $\mu_I$ is not overlined,} \\
 \ell -k +2, &\text{ if $\mu_I$ is overlined,} 
\end{cases}$$
which is clear by definition of an overpartition.
Thus the $I$ of $(\gamma,\tau)$ is the same as the one of $(\lambda,\mu)$, and $\mathcal{A}(\gamma,\tau) = (\lambda,\mu).$
The point $(i)$ is proved.

Then, point $(ii)$ is obvious from the definition of $\mathcal{A}$.

Finally let us verify $(iii)$. We have, by definition of $\mathcal{S}, \mathcal{C}$ and $\mathcal{A},$
\begin{align*}
\mathcal{S} \left( \overline{\mathcal{P}} (n-k, k-1) \times \overline{\mathcal{P}} ( n-\ell, \ell +1) \right) &= \overline{\mathcal{P}} ( n-\ell, \ell +1) \times \overline{\mathcal{P}} (n-k, k-1),\\
\mathcal{C} \left( \overline{\mathcal{P}} ( n-\ell, \ell +1) \times \overline{\mathcal{P}} (n-k, k-1) \right) &= \overline{\mathcal{P}} (\ell +1, n-\ell) \times \overline{\mathcal{P}} (k-1, n-k),\\
\mathcal{A} \left( \overline{\mathcal{P}} (\ell +1, n-\ell) \times \overline{\mathcal{P}} (k-1, n-k) \right) &\subset \overline{\mathcal{P}} (\ell, n-\ell) \times \overline{\mathcal{P}} (k, n-k),\\
\mathcal{C} \left( \overline{\mathcal{P}} (\ell, n-\ell) \times \overline{\mathcal{P}} (k, n-k) \right) &= \overline{\mathcal{P}} (n-\ell, \ell) \times \overline{\mathcal{P}} (n-k,k),\\
\mathcal{S} \left( \overline{\mathcal{P}} (n-\ell, \ell) \times \overline{\mathcal{P}} (n-k,k) \right) &= \overline{\mathcal{P}} (n-k,k) \times \overline{\mathcal{P}} (n-\ell, \ell).
\end{align*}
Thus $(iii)$ is satisfied.

\end{proof}

Here we give an example to illustrate the map $\phi = \mathcal{L} \circ \mathcal{A} =  \mathcal{S} \circ \mathcal{C} \circ \mathcal{A} \circ \mathcal{C} \circ \mathcal{S} \circ \mathcal{A}$.
\begin{example*}
When $n=10$, $k=4$, and $\ell=5$, we consider the partition pair $(\la, \mu) \in \overline{\mathcal{P}} (7,3) \times \overline{\mathcal{P}} (4,6)$, where
\[
\la = (\overline{7},6,4), \;\; \text{and}\;\; \mu=(4,\overline{4},3,3,2,2).
\] 
Then, we see that
\[
\begin{aligned}
(\la, \mu) 
& \xmapsto{\; \mathcal{A} \; } ( (6, \overline{6}, 4 ), (\overline{5},4,3,3,2,2) ) \\
& \xmapsto{\; \mathcal{S} \;} (  (\overline{5},4,3,3,2,2) ,  (6, \overline{6}, 4 ) ) \\
& \xmapsto{\; \mathcal{C} \; } ( ( 6, 6, 4, 2, \overline{1} ), (3,3,3, 3,2,\overline{2} ) ) \\
& \xmapsto{\; \mathcal{A}\; } ( (5,5, 4,2,\overline{1}), (4,4, 3, 3,2,\overline{2} ) ) \\
& \xmapsto{\; \mathcal{C} \; } ( (\overline{5}, 4,3,3, 2 ), (6, \overline{6}, 4,2 ) ) \\
& \xmapsto{\; \mathcal{S} \;} (   (6, \overline{6}, 4,2 ),  (\overline{5}, 4,3,3, 2 ) ),
\end{aligned}
\]
which is in $\overline{\mathcal{P}} (6,4) \times \overline{\mathcal{P}} (5, 5)$ as desired. 
\end{example*}

If we forbid overlined parts (i.e. if we set $t=0$), the above proof becomes Butler's proof of \eqref{eq:butler}.

From Theorem \ref{qlog_thm}, we can deduce several interesting corollaries.

\begin{corollary}
\label{cor:qlog}
The over-$(q,t)$-binomial coefficients are $(q,t)$-log-concave, namely for all $0<k < n$,
\[
\overline{ { n \brack k}}_{q,t}^2- \overline{ { n \brack k-1}}_{q,t} \overline{ { n \brack k+1}}_{q,t}
\]
has non-negative coefficients as a polynomial in $q$ and $t$.
\end{corollary}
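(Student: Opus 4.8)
The plan is to deduce Corollary~\ref{cor:qlog} directly from Theorem~\ref{qlog_thm} by specializing the parameters. Theorem~\ref{qlog_thm} asserts that for all $0<k\leq \ell <n$, the expression
\[
\overline{ { n \brack k}}_{q,t}\,\overline{ { n \brack \ell }}_{q,t} - \overline{ { n \brack k-1}}_{q,t}\,\overline{ { n \brack \ell +1}}_{q,t}
\]
has non-negative coefficients as a polynomial in $q$ and $t$. The key observation is that the $(q,t)$-log-concavity statement is precisely the \emph{diagonal} case $\ell=k$ of this more general two-index inequality. Setting $\ell=k$ is legitimate since the hypothesis $0<k\leq \ell <n$ permits equality, and it immediately collapses the mixed product into a square.

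Concretely, I would substitute $\ell=k$ into the displayed expression of Theorem~\ref{qlog_thm}. The first term becomes $\overline{ { n \brack k}}_{q,t}\,\overline{ { n \brack k }}_{q,t} = \overline{ { n \brack k}}_{q,t}^2$, and the second term becomes $\overline{ { n \brack k-1}}_{q,t}\,\overline{ { n \brack k+1}}_{q,t}$. Thus the general expression specializes to exactly
\[
\overline{ { n \brack k}}_{q,t}^2 - \overline{ { n \brack k-1}}_{q,t}\,\overline{ { n \brack k+1}}_{q,t},
\]
which is the quantity in the corollary. Since Theorem~\ref{qlog_thm} guarantees non-negativity of the coefficients for every admissible $(k,\ell)$, in particular for $\ell=k$, the non-negativity claimed in Corollary~\ref{cor:qlog} follows at once. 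One should only verify that the index constraints match: the corollary requires $0<k<n$, and indeed taking $\ell=k$ in the constraint $0<k\leq\ell<n$ yields exactly $0<k<n$, so every case of the corollary arises from an admissible case of the theorem.

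There is essentially no obstacle here, as the corollary is a pure specialization and the combinatorial injection constructed in the proof of Theorem~\ref{qlog_thm} already handles the diagonal case. The only point deserving a remark is that the theorem is genuinely stronger than the corollary: proving $(q,t)$-log-concavity in isolation would still require an involution-type argument, whereas the stronger two-parameter statement subsumes it for free. I would therefore present the proof in a single sentence, simply noting that the corollary is the case $\ell=k$ of Theorem~\ref{qlog_thm}.
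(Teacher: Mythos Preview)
Your proposal is correct and matches the paper's approach exactly: the paper states Corollary~\ref{cor:qlog} immediately after Theorem~\ref{qlog_thm} without a separate proof, treating it as the obvious specialization $\ell=k$. Your verification that the constraint $0<k\leq\ell<n$ with $\ell=k$ reduces to $0<k<n$ is precisely the only thing worth checking.
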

By setting $t=0$, we obtain Butler's result on the $q$-log-concavity of  $q$-binomial coefficients.

Recall that we have shown that $\mathcal{L}$ is an injection  from $\overline{\mathcal{P}} (n-k, k-1) \times \overline{\mathcal{P}} ( n-\ell, \ell +1)$ to $\overline{\mathcal{P}} (n-k, k) \times \overline{\mathcal{P}} (n-\ell, \ell )$. From this we obtain the following.

\begin{theorem}
\label{cor2:qlog}
For all $0<k \leq \ell < n$, 
\[
\overline{ { n \brack k}}_{q,t} \overline{ { n \brack \ell }}_{q,t} - \overline{ { n-1 \brack k-1}}_{q,t}  \overline{ { n+1 \brack \ell+1 }}_{q,t} 
\]
has non-negative coefficients as a polynomial in $q$ and $t$.
\end{theorem}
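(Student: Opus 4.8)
The plan is to read the claim off directly from the injection constructed in the proof of Theorem~\ref{qlog_thm}, now with the preliminary map $\mathcal{A}$ removed from the composition. First I would translate the two products into generating functions for pairs of overpartitions. Since $\overline{{n \brack k}}_{q,t}$ is the generating function for the overpartitions in $\overline{\mathcal{P}}(n-k,k)$, with $t$ tracking $\#_o$ and $q$ tracking the weight, the four factors correspond to the rectangles
\[
\begin{aligned}
&\overline{{n \brack k}}_{q,t}\leftrightarrow\overline{\mathcal{P}}(n-k,k),\quad \overline{{n \brack \ell}}_{q,t}\leftrightarrow\overline{\mathcal{P}}(n-\ell,\ell),\\
&\overline{{n-1 \brack k-1}}_{q,t}\leftrightarrow\overline{\mathcal{P}}(n-k,k-1),\quad \overline{{n+1 \brack \ell+1}}_{q,t}\leftrightarrow\overline{\mathcal{P}}(n-\ell,\ell+1).
\end{aligned}
\]
Consequently the product $\overline{{n \brack k}}_{q,t}\,\overline{{n \brack \ell}}_{q,t}$ is the generating function for $\overline{\mathcal{P}}(n-k,k)\times\overline{\mathcal{P}}(n-\ell,\ell)$ weighted by $t^{\#_o(\la)+\#_o(\mu)}q^{|\la|+|\mu|}$, and likewise $\overline{{n-1 \brack k-1}}_{q,t}\,\overline{{n+1 \brack \ell+1}}_{q,t}$ is the analogous generating function for $\overline{\mathcal{P}}(n-k,k-1)\times\overline{\mathcal{P}}(n-\ell,\ell+1)$.

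Next I would invoke the map $\mathcal{L}=\mathcal{S}\circ\mathcal{C}\circ\mathcal{A}\circ\mathcal{C}\circ\mathcal{S}$ built in the proof of Theorem~\ref{qlog_thm}. By point~(i) there, $\mathcal{L}$ is an involution of $\overline{\mathcal{P}}\times\overline{\mathcal{P}}$, and by point~(iii) it satisfies
\[
\mathcal{L}\left(\overline{\mathcal{P}}(n-k,k-1)\times\overline{\mathcal{P}}(n-\ell,\ell+1)\right)\subset \overline{\mathcal{P}}(n-k,k)\times\overline{\mathcal{P}}(n-\ell,\ell).
\]
Being an involution restricted to a subset, $\mathcal{L}$ is injective on that subset; this is precisely the injection recalled just before the statement. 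Note that, in contrast to Theorem~\ref{qlog_thm}, no preliminary application of $\mathcal{A}$ is needed, because here the subtracted term $\overline{{n-1 \brack k-1}}_{q,t}\,\overline{{n+1 \brack \ell+1}}_{q,t}$ already corresponds to the domain of $\mathcal{L}$ itself.

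Then I would confirm that $\mathcal{L}$ preserves the two statistics that govern the monomials, namely the total weight $|\la|+|\mu|$ and the total number of overlined parts $\#_o(\la)+\#_o(\mu)$. This reduces to the constituent maps: $\mathcal{S}$ trivially preserves both; conjugation $\mathcal{C}$ preserves the weight and the number of overlined parts of each overpartition; and $\mathcal{A}$ preserves both totals, as already recorded in the proof of Theorem~\ref{qlog_thm}, since the moved parts merely migrate between the two overpartitions under relabelings that keep each part's size and overline status. Hence $\mathcal{L}$ sends a pair to a pair with the same $t$-exponent and the same $q$-exponent.

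Finally, a statistic-preserving injection forces, for every monomial $t^{a}q^{b}$, its coefficient in the generating function of $\overline{\mathcal{P}}(n-k,k-1)\times\overline{\mathcal{P}}(n-\ell,\ell+1)$ to be at most its coefficient in the generating function of $\overline{\mathcal{P}}(n-k,k)\times\overline{\mathcal{P}}(n-\ell,\ell)$, which is exactly the assertion that the displayed difference has non-negative coefficients. I expect the only points needing care to be the bookkeeping of the rectangle identifications and the verification that overpartition conjugation $\mathcal{C}$ leaves $\#_o$ unchanged; the substantive combinatorics has already been carried out in the proof of Theorem~\ref{qlog_thm}.
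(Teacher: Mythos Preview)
Your proposal is correct and follows exactly the paper's approach: the paper derives this theorem in one sentence by recalling that $\mathcal{L}$, already shown in the proof of Theorem~\ref{qlog_thm} to be a weight- and overline-preserving involution, injects $\overline{\mathcal{P}}(n-k,k-1)\times\overline{\mathcal{P}}(n-\ell,\ell+1)$ into $\overline{\mathcal{P}}(n-k,k)\times\overline{\mathcal{P}}(n-\ell,\ell)$. Your write-up simply spells out the rectangle identifications and the statistic preservation that the paper leaves implicit.
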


 By setting $q=t=1$ in Theorems \ref{qlog_thm} and \ref{cor2:qlog}, we deduce the following result on Delannoy numbers.

\begin{corollary}
\label{cor3:logdelannoy}
For all $0 <k \leq \ell <n$, we have
\[
\begin{aligned}
D(n-k,k) D(n-\ell,\ell) &\geq D(n-k+1,k-1) D(n-\ell-1,\ell+1), \\
D(n-k,k) D(n-\ell,\ell) &\geq D(n-k,k-1) D(n-\ell,\ell+1).
\end{aligned}
\]

\end{corollary}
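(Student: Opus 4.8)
The plan is to derive both inequalities directly by specializing the $(q,t)$-log-concavity results of Theorems \ref{qlog_thm} and \ref{cor2:qlog} at the point $q=t=1$. The essential input is the identification, established by the Proposition at the start of Section~\ref{sec:basic}, that $\overline{{N \brack k}}_{1,1} = D(N-k,k)$ for every pair of indices. The second ingredient is the elementary observation that a polynomial in $q$ and $t$ with non-negative coefficients evaluates to a non-negative real number at $q=t=1$, since this evaluation simply returns the sum of its coefficients.

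First I would record the index translations needed for the first bound. Under the dictionary $\overline{{N \brack k}}_{1,1} = D(N-k,k)$, we have $\overline{{n \brack k}}_{1,1} = D(n-k,k)$, $\overline{{n \brack \ell}}_{1,1} = D(n-\ell,\ell)$, $\overline{{n \brack k-1}}_{1,1} = D(n-k+1,k-1)$, and $\overline{{n \brack \ell+1}}_{1,1} = D(n-\ell-1,\ell+1)$. Applying the non-negativity of the polynomial in Theorem~\ref{qlog_thm} at $q=t=1$ and substituting these four values then yields
\[
D(n-k,k)\,D(n-\ell,\ell) \ge D(n-k+1,k-1)\,D(n-\ell-1,\ell+1).
\]

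For the second inequality I would repeat the procedure with Theorem~\ref{cor2:qlog}, which requires two further translations: $\overline{{n-1 \brack k-1}}_{1,1} = D(n-k,k-1)$ and $\overline{{n+1 \brack \ell+1}}_{1,1} = D(n-\ell,\ell+1)$. Evaluating the corresponding non-negative polynomial at $q=t=1$ gives
\[
D(n-k,k)\,D(n-\ell,\ell) \ge D(n-k,k-1)\,D(n-\ell,\ell+1),
\]
which is exactly the second claimed bound. Both hold under the hypothesis $0<k\le\ell<n$ inherited from the source theorems.

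There is essentially no analytic obstacle here; the entire content is the combinatorial input already contained in Theorems~\ref{qlog_thm} and \ref{cor2:qlog}, whose proofs supply injections between the relevant overpartition sets. The only point requiring care --- and the step I would double-check most carefully --- is the bookkeeping of the index shifts under the dictionary $\overline{{N \brack k}}_{1,1}=D(N-k,k)$, in particular the fact that lowering the bottom index by one while keeping the top index fixed \emph{raises} the first Delannoy argument by one, whereas simultaneously lowering the top index (as in Theorem~\ref{cor2:qlog}) leaves that first argument \emph{unchanged}. Keeping these offsets consistent is precisely what distinguishes the two inequalities.
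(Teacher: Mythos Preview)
Your proposal is correct and follows exactly the same approach as the paper: the paper simply states that Corollary~\ref{cor3:logdelannoy} is obtained by setting $q=t=1$ in Theorems~\ref{qlog_thm} and~\ref{cor2:qlog}, and your argument carries out precisely that specialization, with the index bookkeeping under the identification $\overline{{N \brack k}}_{1,1}=D(N-k,k)$ done correctly.
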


Now by setting $\ell=k$ and $n=n+k$ in Corollary \ref{cor3:logdelannoy} this yields the log-concavity of Delannoy numbers, which also implies their unimodality.

\begin{corollary}
\label{cor:logdelannoy}
For all $n>k>0$, the Delannoy numbers $D(n,k)$ satisfy
\[
\begin{aligned}
D(n,k)^2 &\geq D(n+1,k-1) D(n-1,k+1), \\
D(n,k)^2 &\geq D(n,k-1) D(n,k+1).
\end{aligned}
\]
In particular, the Delannoy numbers $D(n,k)$ are log-concave.
\end{corollary}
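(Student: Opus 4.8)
The plan is to obtain Corollary \ref{cor:logdelannoy} as a direct specialization of Corollary \ref{cor3:logdelannoy}, with no new ideas beyond a substitution of variables. To keep the two sets of indices apart, I would first restate Corollary \ref{cor3:logdelannoy} using fresh letters $N, a, b$ with $0 < a \leq b < N$:
\[
\begin{aligned}
D(N-a,a) D(N-b,b) &\geq D(N-a+1,a-1) D(N-b-1,b+1), \\
D(N-a,a) D(N-b,b) &\geq D(N-a,a-1) D(N-b,b+1).
\end{aligned}
\]
I would then set $b = a$ and $N = n + a$, letting the source index $a$ play the role of the target index $k$. Under this specialization both factors $D(N-a,a)$ and $D(N-b,b)$ on the left collapse to $D(n,k)$, so each left-hand side becomes $D(n,k)^2$; the first right-hand side becomes $D(n+1,k-1) D(n-1,k+1)$ and the second becomes $D(n,k-1) D(n,k+1)$. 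These are exactly the two inequalities asserted in Corollary \ref{cor:logdelannoy}.

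Next I would confirm that the hypotheses are respected. The condition $0 < a \leq b < N$ turns into $0 < k \leq k < n+k$, i.e. simply $k > 0$ and $n > 0$; since the stated range $n > k > 0$ is contained in this region, both inequalities hold there. In fact the argument delivers them for every pair of positive integers $n, k$, and combined with the symmetry $D(m,n) = D(n,m)$ the hypothesis $n > k$ is not even needed. I would also note that every Delannoy number appearing has nonnegative arguments, since $n, k \geq 1$ guarantees $k - 1 \geq 0$ and $n - 1 \geq 0$.

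Finally, reading the second inequality $D(n,k)^2 \geq D(n,k-1) D(n,k+1)$ with $n$ fixed and $k$ varying is precisely the assertion that the sequence $k \mapsto D(n,k)$ is log-concave; because every Delannoy number is a positive integer this sequence has no internal zeros, and a log-concave sequence of positive terms is automatically unimodal, which yields the final ``in particular'' clause. Since the whole argument is a relabelling, there is no real obstacle: the only step demanding care is the bookkeeping of the substitution, where one must verify that $a$ becomes $k$ and $N$ absorbs the shift $n \mapsto n+k$ so that both $D(N-a,a)$-type factors genuinely coincide with $D(n,k)$.
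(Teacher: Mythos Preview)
Your proposal is correct and follows exactly the paper's own approach: the paper derives Corollary~\ref{cor:logdelannoy} by setting $\ell=k$ and replacing $n$ by $n+k$ in Corollary~\ref{cor3:logdelannoy}, which is precisely your substitution $b=a=k$, $N=n+k$. Your additional remarks---that the hypothesis $n>k$ is not actually needed, that all arguments stay nonnegative, and that log-concavity of a positive sequence implies unimodality---are sound elaborations of the same one-line reduction.
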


Similarly, by setting $q=1$ and $t=q$, in Theorems \ref{qlog_thm} and \ref{cor2:qlog}, we deduce the following result on Sagan's $q$-Delannoy numbers.

\begin{corollary}
\label{cor2:logqdelannoy}
For all $0 <k \leq \ell <n$, we have
\[
\begin{aligned}
D_q(n-k,k) D_q(n-\ell,\ell) &\geq D_q(n-k+1,k-1) D_q(n-\ell-1,\ell+1), \\
D_q(n-k,k) D_q(n-\ell,\ell) &\geq D_q(n-k,k-1) D_q(n-\ell,\ell+1).
\end{aligned}
\]

\end{corollary}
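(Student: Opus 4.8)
The plan is to derive this corollary as a specialization of Theorems \ref{qlog_thm} and \ref{cor2:qlog}, in exact analogy with the deduction of Corollary \ref{cor3:logdelannoy}, except that here one substitutes $q=1$ and $t=q$ rather than $q=t=1$.

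First I would record the identification $\overline{{m+n \brack n}}_{1,q} = D_q(m,n)$. This is immediate from the generating-function Proposition of Section \ref{sec:basic}: since $\overline{{m+n\brack n}}_{q,t} = \sum_{p\in\mathcal{D}_{m,n}} t^{d(p)} q^{wt(p)}$, setting the first variable equal to $1$ collapses $q^{wt(p)}$ to $1$, and renaming the variable $t$ to $q$ turns $t^{d(p)}$ into $q^{d(p)}$, which is exactly Sagan's $D_q(m,n)$. In bracket notation this reads $\overline{{N \brack k}}_{1,q} = D_q(N-k,k)$, together with the shifted versions $\overline{{n-1\brack k-1}}_{1,q}=D_q(n-k,k-1)$ and $\overline{{n+1\brack \ell+1}}_{1,q}=D_q(n-\ell,\ell+1)$ that I will need in the last step.

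Next I would apply the substitution (first variable $\to 1$, then $t\to q$) to the two difference polynomials appearing in Theorems \ref{qlog_thm} and \ref{cor2:qlog}. The essential observation is that both theorems assert non-negativity of \emph{coefficients} as polynomials in the two variables $q$ and $t$, and this property is preserved under the substitution: setting the first variable to $1$ amounts to summing coefficients along each power of $t$, which keeps them non-negative, and relabelling $t$ as $q$ is a harmless renaming. Hence the specialized differences still have non-negative coefficients as polynomials in the single remaining variable $q$, which is precisely what the inequalities $\geq$ in the statement mean (coefficientwise non-negativity). Substituting the identifications from the first step then turns the difference of Theorem \ref{qlog_thm} into $D_q(n-k,k)D_q(n-\ell,\ell) - D_q(n-k+1,k-1)D_q(n-\ell-1,\ell+1)$ and the difference of Theorem \ref{cor2:qlog} into $D_q(n-k,k)D_q(n-\ell,\ell) - D_q(n-k,k-1)D_q(n-\ell,\ell+1)$, yielding the two asserted inequalities.

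I expect no genuine obstacle, since the argument is purely a change of variables in identities already established. The only points demanding care are keeping the index shifts in the bracket-to-$D_q$ dictionary correct, and emphasizing that it is the \emph{coefficientwise} non-negativity of Theorems \ref{qlog_thm} and \ref{cor2:qlog}, rather than a merely numerical inequality, that guarantees the conclusion survives the specialization $q=1$, $t=q$.
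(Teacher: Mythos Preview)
Your proposal is correct and follows essentially the same approach as the paper, which simply states that the corollary is obtained by setting $q=1$ and $t=q$ in Theorems \ref{qlog_thm} and \ref{cor2:qlog}. You have been more explicit than the paper in verifying the identification $\overline{{n\brack k}}_{1,q}=D_q(n-k,k)$ and in noting that coefficientwise non-negativity survives the specialization, but the underlying argument is identical.
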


And by setting $\ell=k$ and $n=n+k$ in Corollary \ref{cor2:logqdelannoy}, we obtain the $q$-log-concavity of Sagan's $q$-Delannoy numbers.

\begin{corollary}
\label{cor:logqdelannoy} 
For all $n>k>0$, Sagan's $q$-Delannoy numbers $D_q(n,k)$ satisfy that
\begin{align*}
&D_q (n,k)^2 - D_q(n+1,k-1) D_q(n-1,k+1) 
\intertext{and}
&D_q (n,k)^2 - D_q(n,k-1) D_q(n,k+1)
\end{align*}
have non-negative coefficients as polynomials in $q$. In particular Sagan's $q$-Delannoy numbers $D_q(n,k)$ are $q$-log-concave.
\end{corollary}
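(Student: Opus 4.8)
The plan is to derive Corollary~\ref{cor:logqdelannoy} as the $\ell=k$, $n\mapsto n+k$ specialization of Corollary~\ref{cor2:logqdelannoy}, exactly as Corollary~\ref{cor:logdelannoy} was derived from Corollary~\ref{cor3:logdelannoy} in the Delannoy case. First I would recall that Sagan's $q$-Delannoy numbers are obtained from the over-$(q,t)$-binomial coefficients by setting $q=1$ and $t=q$, since by the Proposition in Section~\ref{sec:basic} we have $\overline{{m+n \brack n}}_{q,t} = \sum_{p \in \mathcal{D}_{m,n}} t^{d(p)} q^{wt(p)}$, and collapsing the weight variable ($q=1$) while renaming the overline-counting variable ($t=q$) yields precisely $D_q(m,n) = \sum_{p \in \mathcal{D}_{m,n}} q^{d(p)}$. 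The crucial point is that this substitution is a ring homomorphism sending polynomials with non-negative coefficients in $(q,t)$ to polynomials with non-negative coefficients in $q$: the positive cone is preserved because $q=1, t=q$ substitutes non-negative quantities for the indeterminates. Hence the non-negativity statements of Theorems~\ref{qlog_thm} and~\ref{cor2:qlog} are inherited after the substitution.

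Concretely, I would apply the substitution $q=1,\, t=q$ to the two polynomials
\[
\overline{ { n \brack k}}_{q,t}^2 - \overline{ { n \brack k-1}}_{q,t}\overline{ { n \brack k+1}}_{q,t}
\qquad\text{and}\qquad
\overline{ { n \brack k}}_{q,t}\overline{ { n \brack \ell }}_{q,t} - \overline{ { n-1 \brack k-1}}_{q,t}\overline{ { n+1 \brack \ell+1 }}_{q,t}
\]
that are shown to have non-negative $(q,t)$-coefficients in Corollary~\ref{cor:qlog} and Theorem~\ref{cor2:qlog}. Setting $\ell=k$ in the log-concavity form of Corollary~\ref{cor2:logqdelannoy} and shifting $n \mapsto n+k$ aligns the index pattern $\overline{{n\brack k}}$ with the desired $D_q(n,k)$ after the Delannoy dictionary $D_q(a,b)\leftrightarrow \overline{{a+b \brack b}}_{q,t}|_{q=1,t=q}$. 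Under this dictionary the four cross-terms $D_q(n\pm1,k\mp1)$ and $D_q(n,k\mp1)$ match the specialized over-$(q,t)$-binomial products, so the two displayed differences in Corollary~\ref{cor:logqdelannoy} are exactly the images of the two non-negative $(q,t)$-polynomials, hence have non-negative $q$-coefficients.

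The only genuine subtlety — and the step I would flag as requiring care rather than difficulty — is bookkeeping the index translation between the ``fitting inside a rectangle'' convention $\overline{{m+n\brack n}}_{q,t}$ and the Delannoy convention $D_q(m,n)$, making sure the shifts $n\mapsto n+k$ and $\ell=k$ land the arguments correctly so that, for instance, $\overline{{n-1\brack k-1}}_{q,t}\overline{{n+1\brack k+1}}_{q,t}$ becomes $D_q(n+1,k-1)D_q(n-1,k+1)$ after specialization. Once this translation is fixed, the argument is purely formal: the positivity is transported along the specialization map with no new combinatorics needed. Finally, the $q$-log-concavity assertion is immediate, since a difference of consecutive squared-versus-shifted products having non-negative coefficients is, by definition, the $q$-log-concavity of the sequence $D_q(n,k)$; I would close by noting this is the direct $q$-analogue of the ordinary log-concavity recorded in Corollary~\ref{cor:logdelannoy}.
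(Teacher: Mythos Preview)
Your proposal is correct and follows essentially the same route as the paper: set $\ell=k$ and shift $n\mapsto n+k$ in Corollary~\ref{cor2:logqdelannoy}, which itself is obtained from Theorems~\ref{qlog_thm} and~\ref{cor2:qlog} via the substitution $q=1,\ t=q$. Your explicit justification that this specialization is a positivity-preserving ring homomorphism, together with the careful index translation $\overline{{a+b\brack b}}_{1,q}=D_q(a,b)$, is exactly the bookkeeping the paper leaves implicit.
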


Moreover, we can also generalize Corollary 4.5 of \cite{Butler} to over-$(q,t)$-binomial coefficients.
\begin{corollary}
\label{cor:butler}
For $0 \leq k-r \leq k \leq \ell \leq \ell+r \leq n,$
\[
\overline{ { n \brack k}}_{q,t} \overline{ { n \brack \ell}}_{q,t} - \overline{ { n \brack k-r}}_{q,t} \overline{ { n \brack \ell+r}}_{q,t}
\]
has non-negative coefficients as a polynomial in $t$ and $q$.
\end{corollary}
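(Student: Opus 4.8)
The plan is to deduce Corollary \ref{cor:butler} from Theorem \ref{cor2:qlog} by a telescoping argument, exactly in the spirit of Butler's Corollary 4.5. The key observation is that Theorem \ref{cor2:qlog}, after the substitution $n \mapsto n$, $k \mapsto k$, $\ell \mapsto \ell$, asserts that
\[
\overline{ { n \brack k}}_{q,t} \overline{ { n \brack \ell }}_{q,t} - \overline{ { n-1 \brack k-1}}_{q,t}  \overline{ { n+1 \brack \ell+1 }}_{q,t}
\]
has non-negative coefficients. However, for the telescoping we actually want the version coming directly from the injection $\mathcal{L}\circ\mathcal{A}$ of the proof of Theorem \ref{qlog_thm}, which compares the pair of indices $(k,\ell)$ with the pair $(k-1,\ell+1)$ while keeping $n$ fixed: that is, for $0<k\le\ell<n$,
\[
\overline{ { n \brack k}}_{q,t} \overline{ { n \brack \ell }}_{q,t} - \overline{ { n \brack k-1}}_{q,t}  \overline{ { n \brack \ell+1 }}_{q,t}
\]
has non-negative coefficients (this is precisely Theorem \ref{qlog_thm} itself). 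I would use this fixed-$n$ inequality as the atomic step.

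First I would set up the telescope. Given $0 \le k-r \le k \le \ell \le \ell+r \le n$, define for $0 \le i \le r$ the product
\[
P_i := \overline{ { n \brack k-i}}_{q,t} \; \overline{ { n \brack \ell+i}}_{q,t}.
\]
Then the quantity we must show is non-negative is $P_0 - P_r$, and I would write it as the telescoping sum
\[
P_0 - P_r = \sum_{i=0}^{r-1} \left( P_i - P_{i+1} \right)
= \sum_{i=0}^{r-1} \left( \overline{ { n \brack k-i}}_{q,t} \, \overline{ { n \brack \ell+i}}_{q,t} - \overline{ { n \brack k-i-1}}_{q,t} \, \overline{ { n \brack \ell+i+1}}_{q,t} \right).
\]
Each summand is exactly the expression from Theorem \ref{qlog_thm} with the role of $k$ played by $k-i$ and the role of $\ell$ played by $\ell+i$. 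I would then verify that the hypotheses of Theorem \ref{qlog_thm} are met for each $i$ in the range $0 \le i \le r-1$: one needs $0 < k-i$ and $k-i \le \ell+i$ and $\ell+i < n$. The middle inequality holds because $k \le \ell$ and $i \ge 0$; the upper bound $\ell+i \le \ell + (r-1) < \ell + r \le n$ holds by hypothesis; and the lower bound $k - i \ge k - (r-1) > k - r \ge 0$ holds likewise. Hence every telescoped difference has non-negative coefficients as a polynomial in $t$ and $q$, and a sum of such polynomials again has non-negative coefficients, giving the result.

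The one genuine point requiring care—and the step I expect to be the main obstacle—is making sure the index ranges line up so that the strict inequality $\ell+i < n$ required by Theorem \ref{qlog_thm} is actually available for every term in the telescope. The hypothesis only gives $\ell + r \le n$, so the last index we may legitimately feed into the fixed-$n$ log-concavity inequality is $i = r-1$, producing the difference $P_{r-1} - P_r$ with second index $\ell + (r-1) = \ell + r - 1 < n$; this is why the telescope must stop at $i = r-1$ rather than reaching a boundary term. Similarly the strict lower requirement $k - i > 0$ forces attention at the small end. Once one checks that the admissible window $0 \le i \le r-1$ simultaneously respects $0 < k-i$ and $\ell+i < n$, which the displayed bounds confirm, the argument closes. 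If one prefers, the degenerate cases $k-r = k$ (i.e. $r=0$) or where some index hits an endpoint can be handled separately, but they make the difference vanish or reduce the range and so cause no trouble.
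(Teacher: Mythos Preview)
Your argument is correct and is essentially the same telescoping proof the paper gives: you write $P_0 - P_r = \sum_{i=0}^{r-1}(P_i - P_{i+1})$ and apply Theorem~\ref{qlog_thm} to each summand. Your additional verification that the strict index bounds $0 < k-i \le \ell+i < n$ hold for every $0 \le i \le r-1$ is a useful point of care the paper leaves implicit; the initial detour through Theorem~\ref{cor2:qlog} is unnecessary, since (as you yourself note) the atomic step is Theorem~\ref{qlog_thm}.
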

\begin{proof}
The proof is similar to the one in \cite{Butler}.
By Theorem \ref{qlog_thm}, all the terms of the telescoping sum
\begin{align*}
\overline{ { n \brack k}}_{q,t}& \overline{ { n \brack \ell}}_{q,t} - \overline{ { n \brack k-r}}_{q,t} \overline{ { n \brack \ell+r}}_{q,t}\\&= \sum_{i=0}^{r-1} \left( \overline{ { n \brack k-i}}_{q,t} \overline{ { n \brack \ell+i}}_{q,t} - \overline{ { n \brack k-i-1}}_{q,t} \overline{ { n \brack \ell+i+1}}_{q,t} \right)
\end{align*}
have non-negative coefficients.
\end{proof}

As usual, setting $q=t=1$ yields some interesting result on Delannoy numbers.
\begin{corollary}
\label{cor:butlerdelannnoy}
For $0 \leq k-r \leq k \leq \ell \leq \ell+r \leq n,$
\[
D(n-k,k) D(n-\ell,\ell) \geq D(n-k+r,k-r) D(n-\ell+r,\ell+r).
\]
\end{corollary}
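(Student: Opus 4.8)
The plan is to derive Corollary \ref{cor:butlerdelannnoy} directly from Corollary \ref{cor:butler} by specializing the formal variables, exactly as every other Delannoy consequence in this section has been obtained. First I would recall that Corollary \ref{cor:butler} asserts, for $0 \leq k-r \leq k \leq \ell \leq \ell+r \leq n$, that the polynomial
\[
\overline{ { n \brack k}}_{q,t} \overline{ { n \brack \ell}}_{q,t} - \overline{ { n \brack k-r}}_{q,t} \overline{ { n \brack \ell+r}}_{q,t}
\]
has non-negative coefficients in $t$ and $q$. A polynomial with non-negative coefficients takes a non-negative value whenever its variables are non-negative, so in particular setting $q=t=1$ yields a non-negative number. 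The whole proof is therefore just the observation that evaluating at $q=t=1$ preserves the inequality.

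Next I would translate the evaluated over-$(q,t)$-binomial coefficients into Delannoy numbers. The identity
\[
\overline{{ m+n \brack n }}_{1,1} = D(m,n),
\]
established in Section~\ref{sec:basic}, tells us that under the convention used throughout this section (writing the top entry as a sum of the two ``dimensions''), one has $\overline{{ n \brack j }}_{1,1} = D(n-j,j)$. Applying this to each of the four factors, the specialization of the polynomial above becomes
\[
D(n-k,k)\,D(n-\ell,\ell) - D(n-k+r,k-r)\,D(n-\ell+r,\ell+r),
\]
which, being the value at $q=t=1$ of a polynomial with non-negative coefficients, is $\geq 0$. Rearranging gives precisely the claimed inequality
\[
D(n-k,k) D(n-\ell,\ell) \geq D(n-k+r,k-r) D(n-\ell+r,\ell+r).
\]

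There is essentially no obstacle here, since the corollary is a routine specialization; the only point requiring minor care is bookkeeping the index shift $\overline{{ n \brack j }}_{q,t} = \overline{{ (n-j)+j \brack j }}_{q,t}$ so that the arguments of $D$ come out as the differences $n-k$, $k$, and the shifted pairs $n-k+r$, $k-r$. The hypothesis $0 \leq k-r \leq k \leq \ell \leq \ell+r \leq n$ is exactly what is needed to keep all four index pairs non-negative, matching the range in Corollary \ref{cor:butler}, so no additional constraints arise. Thus the statement follows immediately.
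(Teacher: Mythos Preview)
Your argument is exactly the paper's: the corollary is obtained simply by setting $q=t=1$ in Corollary~\ref{cor:butler}, and the paper says nothing more than this. One small bookkeeping slip (shared with the paper's printed statement): the specialization $\overline{{n \brack \ell+r}}_{1,1}$ equals $D\bigl(n-(\ell+r),\ell+r\bigr)=D(n-\ell-r,\ell+r)$, so the last factor on the right should read $D(n-\ell-r,\ell+r)$ rather than $D(n-\ell+r,\ell+r)$.
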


\section{Unimodality conjectures}
\label{sec:unimodal}
We now present a few conjectures and observations about the unimodality of over-$(q,t)$-binomial coefficients.
Recall that a polynomial $p(x)= a_0 + a_1 x + \cdots + a_r x^r$ is \emph{unimodal} if there is an integer $\ell$ (called the \emph{peak}) such that
\[
a_0 \leq a_1 \leq \cdots \leq a_{\ell-1} \leq a_{\ell} \geq a_{\ell+1} \geq \cdots \geq a_r.
\]
It is well-known that Gaussian polynomials \cite{Syl2} and $q$-multinomial coefficients \cite[Theorem 3.11]{Abook} are unimodal.

We extend this definition to polynomials in two variables. We say that a polynomial $P(q,t) = \sum_{k=0}^r \sum_{n=0}^s a_{k,n}t^kq^n$ is \emph{doubly unimodal} if
\begin{enumerate}
\item for every fixed $k \in \{0, \dots , r\}$, the coefficient of $t^k$ in $P(q,t)$ is unimodal in $q$, that is there exists an integer $\ell$ such that
$$a_{k,0} \leq a_{k,1} \leq \cdots \leq a_{k,\ell-1} \leq a_{k,\ell} \geq a_{k,\ell+1} \geq \cdots \geq a_{k,s},$$
\item for every fixed $n \in \{0, \dots , s\}$, the coefficient of $q^n$ in $P(q,t)$ is unimodal in $t$, that is there exists an integer $\ell'$ such that
$$a_{0,n} \leq a_{1,n} \leq \cdots \leq a_{\ell'-1,n} \leq a_{\ell',n} \geq a_{\ell'+1,n} \geq \cdots \geq a_{r,n}.$$
\end{enumerate}

Computer experiments suggest that the following conjectures are true.
\begin{conjecture}
\label{conj:doubleunimodal}
For every positive integers $m$ and $n$, the over-$(q,t)$-binomial coefficient $\overline{{ m +n \brack n }}_{q,t}$ is doubly unimodal.
\end{conjecture}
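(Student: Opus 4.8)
The plan is to separate the two requirements in the definition of double unimodality, because condition~(1) can in fact be established unconditionally, so that the real content of the conjecture lies entirely in condition~(2).

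For condition~(1), observe that by Theorem~\ref{th:gene} the coefficient of $t^k$ in $\overline{{m+n\brack n}}_{q,t}$ is exactly
\[
\overline{G}(m,n,k) = q^{\frac{k(k+1)}{2}}{m\brack k}_q{n+m-k\brack n-k}_q.
\]
Each Gaussian polynomial ${m\brack k}_q$ and ${n+m-k\brack n-k}_q$ is symmetric and unimodal~\cite{Syl2}. I would then invoke the classical fact that the product of two symmetric unimodal polynomials with non-negative coefficients is again symmetric and unimodal: writing each factor as a non-negative combination of centered intervals $q^a+q^{a+1}+\cdots+q^{d-a}$, the convolution of two such intervals is a trapezoidal sequence, hence symmetric and unimodal, and all the resulting blocks share the common center $\tfrac12(\deg{m\brack k}_q+\deg{n+m-k\brack n-k}_q)$. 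Multiplication by the monomial $q^{k(k+1)/2}$ merely shifts exponents and preserves both properties. Thus $\overline{G}(m,n,k)$ is unimodal (indeed symmetric unimodal) in $q$ for every $k$, and condition~(1) holds. This step is routine.

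The hard part is condition~(2): for each fixed $N$ one must show that $k\mapsto\overline{p}(m,n,k,N)$ is unimodal. Decomposing an overpartition into its $k$ distinct overlined parts and the ordinary partition formed by the remaining parts gives
\[
\overline{p}(m,n,k,N)=\sum_{a+b=N}D_k(a)\,B_{m,n-k}(b),
\]
where $D_k(a)$ counts partitions of $a$ into exactly $k$ distinct parts $\le m$ and $B_{m,n-k}(b)$ counts partitions of $b$ fitting inside an $m\times(n-k)$ box. The difficulty is that \emph{both} factors depend on $k$, so the usual convolution arguments do not apply, and numerically the slice is not symmetric in $k$, which rules out the symmetric-chain decomposition used for ordinary Gaussian polynomials.

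My main line of attack would be to build, in the spirit of the involutions of Sections~\ref{sec:invo} and~\ref{sec:qlogconcave}, an explicit injection from the overpartitions of $N$ in the box with exactly $k$ overlined parts into those with exactly $k+1$ overlined parts when $k$ lies below the peak (and the reverse above it). The natural, weight- and box-preserving move is to overline the last occurrence of a suitably chosen repeated non-overlined part, which raises the number of overlined parts by one without changing $N$, the number of parts, or the bounds $\le m$ and $\le n-k$. The crux is that this move is unavailable precisely when every repeated value is already overlined, and it is exactly at this boundary that the peak occurs and the injection must be set up with care; locating the peak as a function of $m,n,N$ and making the map uniformly invertible there is where I expect the argument to be delicate. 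A possible alternative is to prove the stronger assertion that the slice is log-concave in $k$ with no internal zeros, perhaps by adapting Butler's injection from the proof of Theorem~\ref{qlog_thm}; however, since it is not clear that log-concavity in $k$ holds at fixed $N$, I would verify this numerically before committing to that route. This boundary obstruction is presumably why the statement is still only a conjecture.
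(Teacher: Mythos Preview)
This statement is a \emph{conjecture} in the paper, not a theorem; the paper does not prove it. The only progress the paper records is the Remark immediately following the conjecture, which establishes condition~(i) by appealing to formula~\eqref{eq:trinom} and the unimodality of $q$-multinomial coefficients. Your argument for~(i) via the product form $q^{k(k+1)/2}{m\brack k}_q{n+m-k\brack n-k}_q$ and the fact that a product of symmetric unimodal polynomials is symmetric unimodal is correct and essentially equivalent, since that product is precisely the $q$-multinomial ${m+n-k\brack k,m-k,n-k}_q$.

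For condition~(ii) you do not have a proof, and you say so; this matches the paper exactly, which also leaves~(ii) open and identifies it as the ``challenging part.'' Your proposed lines of attack (an overlining/un-overlining injection in the spirit of Sections~\ref{sec:invo}--\ref{sec:qlogconcave}, or a log-concavity-in-$k$ approach) are reasonable heuristics, but as you yourself note they run into a genuine obstruction at the peak, and the paper offers no further progress here either. So there is no discrepancy to report: your assessment of what is proved and what remains open coincides with the paper's.
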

\begin{remark}
By the formula \eqref{eq:trinom} and using the fact that $q$-multinomial coefficients are unimodal, we can easily deduce that part $(i)$ of the definition is satisfied. Therefore the challenging part of the conjecture is to prove that for every $N$, the coefficient of $q^N$ in $\overline{{m \brack n}}_{q,t}$ is unimodal in $t$.
\end{remark}

\begin{conjecture}
\label{conj:unimodal}
For every positive integers $m$ and $n$, $\overline{{ m +n \brack n }}_{q,1}$ is unimodal in $q$.
\end{conjecture}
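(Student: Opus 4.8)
The plan is to start from the explicit expansion furnished by Theorem~\ref{th:gene} at $t=1$, which displays the polynomial as a sum of layers indexed by the number of overlined parts. Writing $r=\min\{m,n\}$ and using the factorization $\overline{G}(m,n,k)=q^{\binom{k+1}{2}}{m\brack k}_q{m+n-k\brack n-k}_q$ from the proof of Theorem~\ref{th:gene}, we have
\[
\overline{{m+n\brack n}}_{q,1}=\sum_{k=0}^{r}T_k(q),\qquad T_k(q):=q^{\binom{k+1}{2}}{m\brack k}_q{m+n-k\brack n-k}_q .
\]
Each $T_k$ is a power of $q$ times a product of two Gaussian polynomials, so by the unimodality of Gaussian polynomials \cite{Syl2} and the fact that a product of symmetric unimodal polynomials is symmetric unimodal, every $T_k$ is itself symmetric and unimodal. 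A direct degree count shows $T_k$ is symmetric about $c_k=(mn+k)/2$ and supported on $[\binom{k+1}{2},\,mn-\binom{k}{2}]$; thus the centers $c_0<c_1<\cdots<c_r$ form an arithmetic progression of step $1/2$ while the supports are nested. This is the structural picture I would base everything on.

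The difficulty is that a sum of symmetric unimodal polynomials need not be unimodal, so no general principle applies and one must exploit the specific Gaussian profiles. I would localize the problem via finite differences. Set $\Delta T_k(i)=[q^i]T_k-[q^{i-1}]T_k$; symmetry and unimodality of $T_k$ give $\Delta T_k(i)\ge 0$ for $i\le c_k$ and $\Delta T_k(i)\le 0$ for $i>c_k$. Since $\min_k c_k=mn/2$ and $\max_k c_k=(mn+r)/2$, summing over $k$ yields $\Delta\bigl(\sum_k T_k\bigr)(i)\ge 0$ for $i\le mn/2$ and $\le 0$ for $i>(mn+r)/2$. Hence the coefficients are automatically non-decreasing on $[0,mn/2]$ and non-increasing on $[(mn+r)/2,mn]$, and unimodality reduces entirely to the \emph{core window} $W=(mn/2,\,(mn+r)/2]$: one must show that inside $W$ the quantity $\sum_k\Delta T_k(i)$ changes sign at most once, i.e.\ that the positive contributions from layers still below their center dominate the negative contributions from layers already past their center up to a single crossover point.

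To attack the core window I would adapt the operator (hard-Lefschetz) approach that proves unimodality of Gaussian polynomials and $q$-multinomials. Realize each $T_k$ as the graded dimension of a space $U_k$ carrying a hard-Lefschetz $sl_2$-action (built from two such spaces, re-graded by $\binom{k+1}{2}$), and assemble $V=\bigoplus_k U_k$ so that $\dim V_N=[q^N]\overline{{m+n\brack n}}_{q,1}$. The internal raising operators make the naive direct-sum map $V_N\to V_{N+1}$ injective for $N<mn/2$ and surjective for $N\ge(mn+r)/2$, recovering the two tails above; the genuinely new input is a \emph{mixing} operator on $V$ that transfers weight between adjacent layers $U_k$ and $U_{k+1}$. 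Combinatorially, and more in the spirit of this paper, this amounts to a weight-raising injection on the overpartitions in the $m\times n$ box, valid for every $N\in W$, that raises the weight by one while increasing the number of overlined parts by one, in the style of Butler's argument \cite{Butler} and the involutions constructed in Sections~\ref{sec:invo} and~\ref{sec:qlogconcave}.

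I expect this last step—producing the mixing operator, equivalently the weight-raising injection valid across the whole core window $W$—to be the main obstacle. The two tails are essentially free, but controlling the competition between layers on opposite sides of their centers inside $W$ requires quantitative control of the Gaussian coefficient profiles that no soft argument supplies. As a fallback one might first settle the conjecture for small $m$ or $n$, or attempt an induction via the Pascal-type recurrence \eqref{pa1}, strengthening the inductive hypothesis to track the peak location together with an interlacing relation between consecutive over-$(q,t)$-binomial coefficients, so that the sum of shifted unimodal pieces on the right-hand side remains unimodal.
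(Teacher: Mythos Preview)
The paper does not prove this statement: it is listed as Conjecture~\ref{conj:unimodal} in Section~\ref{sec:unimodal}, supported only by computer experiments, and the remark following it even highlights why the layer-by-layer decomposition is not enough (``the peaks in $q$ are not the same for each $t^k$''). So there is no ``paper's own proof'' to compare against; the relevant question is whether your proposal actually closes the problem.

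It does not. Your setup is sound: the decomposition $\overline{{m+n\brack n}}_{q,1}=\sum_{k}T_k(q)$ with each $T_k$ symmetric unimodal about $c_k=(mn+k)/2$ is correct, and the tail argument giving monotonicity on $[0,mn/2]$ and on $[(mn+r)/2,mn]$ is valid. But the entire content of the conjecture lives in your ``core window'' $W$, and there you only \emph{describe} what a proof would need---a mixing operator between adjacent layers, or an equivalent weight-raising injection on overpartitions---without constructing it. You say yourself that you ``expect this last step \dots\ to be the main obstacle'' and that ``no soft argument supplies'' the needed control. That is an honest assessment, but it means the proposal is a strategy outline with the decisive step missing, not a proof. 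The fallback suggestions (induction via \eqref{pa1} with a strengthened hypothesis, or small cases) are likewise not carried out. In short, you have correctly isolated where the difficulty lies---precisely the phenomenon the paper's remark flags---but you have not overcome it, so the statement remains open after your proposal just as it does in the paper.
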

\begin{remark}
 Conjecture \ref{conj:doubleunimodal} doesn't immediately imply Conjecture \ref{conj:unimodal}, as the peaks in $q$ are not the same for each $t^k$. Therefore, even if they might be related, the two conjectures are of independent interest.
\end{remark}

We illustrate our conjectures for $m=n=4$ in Table 1.

\begin{table}[h]
\begin{center}
\begin{tabular}{c|c|c}
$n$ & The coefficient of $q^n$ & The coefficient of $q^n$ when $t=1$  \\ \hline
 0 & 1 & 1 \\
1 & $1+t$ & 2 \\
2 & $2+2t$ & 4 \\
3 & $3+4t+t^2$ & 8  \\
4 & $5+7t+2t^2$ & 14 \\
5 & $5+10t+5t^2$ & 20 \\
6 & $7+13t+7t^2 +t^3$ & 28 \\
7 & $7+16t+11t^2+2t^3$ & 36 \\
8 & $8+ 17t + 12 t^2 + 3t^3$ & 40 \\
9 & $7+17t+ 14t^2 + 4t^3$ & 42 \\
10 & $7+ 16t+12t^2 + 4t^3 + t^4$ & 40 \\
11 & $5 + 13t + 11t^2 + 3t^3$ & 32 \\
12 & $5 + 10t + 7t^2 + 2t^3$ & 24\\
13 & $3 + 7t + 5t^2 + t^3 $&  16 \\
14 & $2 + 4t + 2t^2$ &  8\\
15 & $1+ 2t + t^2$ & 4\\
16 & $1+ t$ & 2 
\end{tabular}
\end{center}
\label{999}
\caption{The coefficients of $\overline{{8 \brack 4 }}_{t,q}$.}
\end{table}

\begin{remark}
Pak and Panova \cite{Pak} recently proved that the classical $q$-binomial coefficients are strictly unimodal. Experiments show that it should also be the case for $\overline{{ m +n \brack N }}_{q,1}$, and for the coefficients of $q^N$ in $\overline{{m+n \brack n}}_{q,t}$ (as a polynomial in $t$). However it is not the case for the coefficients of $t^k$ in $\overline{{m+n \brack n}}_{q,t}$ (as a polynomial in $q$).
\end{remark}

\section*{Acknowledgements}
The authors thank Krishna Alladi, Manjul Bhargava, Alex Berkovich,  Bruce Berndt, and Ali Uncu for their valuable comments.


\begin{thebibliography}{30}

\bibitem{Alladi1}
K. Alladi, \emph{A new combinatorial study of the Rogers-Fine identity and a related partial theta series},  Int. J. Number Theory {\bf 5} (2009), 1311--1320. 

\bibitem{Alladi3}
K. Alladi, V. E. Hoggatt, \emph{On Tribonacci numbers and related functions}, Fibonacci Quart. {\bf 15} (1977), 42--45.

\bibitem{Alladi2}
K. Alladi, \emph{Partitions with non-repeating odd parts and combinatorial identities}, Ann. Comb. {\bf 20} (2016), 1--20.

\bibitem{AllBer}
K. Alladi, A. Berkovich, \emph{New polynomial analogues of Jacobi's triple product and Lebesgue's identities}, Adv. Appl. Math. {\bf 32} (2004), 801--824.

\bibitem{Abook}
G. E. Andrews, \emph{The Theory of Partitions}, Addison--Wesley, Reading, MA, 1976; reissued: Cambridge University Press,
Cambridge, 1998.

\bibitem{And_aGauss}
G. E. Andrews, \emph{$a$-Gaussian polynomials and finite Rogers-Ramanujan identities}, In Theory and Applications of Special Functions: A Volume Dedicated to Mizan Rahman, M. Ismail and E. Koelink eds., 39--60. Springer, New York, 2005.

\bibitem{Delannoy}
C. Banderier, S. Schwer, \emph{Why Delannoy numbers?},
J. Statist. Plann. Inference {\bf 135} (2005), no. 1, 40--54.

\bibitem{Butler}
L. M. Butler, \emph{The $q$-log-concavity of $q$-binomial coefficients},
J. Combin. Theory Ser. A {\bf 54} (1990), no. 1, 54--63.

\bibitem{LC}
S. Corteel, J. Lovejoy, \emph{Overpartitions}, Trans. Amer. Math. Soc. \textbf{356} (2004), no. 4, 1623--1635.

\bibitem{DK}
J. Dousse and B. Kim, \emph{An overpartition analogue of q-binomial coefficients}, Ramanujan J. {\bf 42} (2017),  267--283.

\bibitem{Fine}
N. J. Fine, \emph{Basic Hypergeometric Series and Applications}, American Mathematical Society, Providence, RI, 1988.

\bibitem{GR}
G. Gasper and M. Rahman, Basic Hypergeometric Series, 2nd Edition, Cambridge Univ. Press, Cambridge, 2004.

\bibitem{Pak}
I. Pak, G. Panova, \emph{Strict unimodality of q-binomial coefficients},
Comptes Rendus Math\'ematiques {\bf 351} (2013), 415--418. 

\bibitem{PS}
T. Prellberg and D. Stanton, \emph{Proof of a monotonicity conjecture},
J. Combin. Theory Ser. A {\bf 103} (2003), no. 2, 377--381. 

\bibitem{Ramirez}
J. L. Ramirez, \emph{Incomplete Tribonacci numbers and polynomials}, J. Integer Seq. {\bf 17} (2014), article 14.4.2.

\bibitem{Rowell}
M. Rowell, \emph{A new exploration of the Lebesgue identity}, Int. J. Number Theory {\bf 6} (2010),  785--798.

\bibitem{Sagan}
B. Sagan, \emph{Unimodality and the reflection principle}.  
Ars Combin. {\bf 48} (1998), 65--72. 


\bibitem{Syl}
J. J. Sylvester, \emph{A constructive theory of partitions, arranged in three acts, an interact, and an exodion}, in The Collected Papers of J. J. Sylvester, Vol. 3, Cambridge University Press, London, 1--83; reprinted by Chelsea, New York, 1973.

\bibitem{Syl2}
J. J. Sylvester, \emph{Proof of the hitherto undemonstrated
fundamental theorem of invariants}, Philosophical Magazine {\bf 5} (1878), 178--188.

\bibitem{Za} 
D. Zagier, \emph{Vassiliev invariants and a strange identity related to the Dedekind eta-function}, Topology {\bf 40} (2001), 945--960.

\end{thebibliography}
\end{document}